\title{Anabelian geometry and descent obstructions on moduli spaces
\thanks{
Keywords: Anabelian geometry, moduli spaces, abelian varieties,
descent obstruction.
Mathematical subject classification: Primary 11G35; Secondary:14G05,14G35}
}
\author{Stefan Patrikis, Jos\'e Felipe Voloch, Yuri G. Zarhin}
\renewcommand\footnotemark{}
\DeclareFontFamily{U}{wncy}{}
\DeclareFontShape{U}{wncy}{m}{n}{%
   <5>wncyr5%
   <6>wncyr6%
   <7>wncyr7%
   <8>wncyr8%
   <9>wncyr9%
   <10>wncyr10%
   <11>wncyr10%
   <12>wncyr6%
   <14>wncyr7%
   <17>wncyr8%
   <20>wncyr10%
   <25>wncyr10}{}
\DeclareMathAlphabet{\cyrille}{U}{wncy}{m}{n}
\def \Pic{{\rm Pic}}
\def \Out{{\rm Out}}
\def\Oc{{\mathcal O}}
\def\fchar{\mathrm{char}}
\newcommand{\gal}[1]{G_{#1}} 
\newcommand{\into}{\hookrightarrow}
\newcommand{\onto}{\twoheadrightarrow}
\newcommand{\mc}{\mathcal}
\newcommand{\mr}{\mathrm}
\DeclareMathOperator{\Rep}{Rep}
\DeclareMathOperator{\End}{End}
\DeclareMathOperator{\Fil}{Fil}
\DeclareMathOperator{\Res}{Res}
\DeclareMathOperator{\Ind}{Ind}
\DeclareMathOperator{\gr}{gr}
\DeclareMathOperator{\Aut}{Aut}
\DeclareMathOperator{\tr}{tr}
\newcommand{\Q}{\mathbb{Q}}
\newcommand{\Qb}{\overline{\mathbb{Q}}}
\newcommand{\Z}{\mathbb{Z}}
\newcommand{\CC}{\mathbb{C}}
                                                     \newcommand{\MM}{\mathbb{M}}
\newcommand{\Ql}{\mathbb{Q}_{\ell}}
\newcommand{\Qlb}{\overline{\mathbb{Q}}_\ell}
\newtheorem{thm}{Theorem}[section]
\newtheorem{prop}[thm]{Proposition}
\newtheorem{lem}[thm]{Lemma}
\newtheorem{lemma}[thm]{Lemma}
\newtheorem{cor}[thm]{Corollary}
          \def \calK {{\mathcal K}}
          \def \calL {{\mathcal L}}
          \def \calA {{\mathcal Y}}
\def \A {{\cal A}}
\def \M {{\cal M}}
\def \Cu {{\cal C}}
\def \Spec {{\rm{Spec}}}
\def \Gal {{\rm{Gal}}}
\def \dim {{\rm{dim\,}}}
\def \Hom {{\rm {Hom}}}
\def\smallsquare{\vbox{\hrule\hbox{\vrule height 1 ex\kern 1 ex\vrule}\hrule}}
\def\endproof{\hfill \smallsquare\vskip 3mm}
\def \abstract{\paragraph{Abstract. }}
\def \proof{\paragraph{Proof. }}
\begin{document}

\maketitle

\begin{abstract}
We study the section conjecture of anabelian geometry and the sufficiency of the finite descent obstruction to the Hasse principle for the moduli spaces of principally polarized abelian varieties and of curves over number fields. For the former we show that the section conjecture fails and the finite descent obstruction holds for a general class of adelic points, assuming several well-known conjectures. This is done by relating the problem to a local-global principle for Galois representations. For the latter, we show how the sufficiency of the finite descent obstruction implies the same for all hyperbolic curves.
\end{abstract}

\section{Introduction}

Anabelian geometry is a program proposed by Grothendieck (\cite{G1,G2}) which suggests that for a certain class of
varieties (called anabelian but, as yet, undefined) over a number field, one can recover the varieties from their \'etale
fundamental group together with the Galois action of the absolute Galois group of the number field. Precise conjectures
exist only for curves and some of them have been proved, notably by Mochizuki (\cite{M}). Grothendieck suggested that
moduli spaces of curves and abelian varieties (the latter perhaps less emphatically) should be anabelian. Already Ihara
and Nakamura \cite{IN} have shown that moduli spaces of abelian varieties should not be anabelian as one cannot
recover their automorphism group from the fundamental group and we will further show that other anabelian properties
fail in this case. 

The finite descent obstruction is a construction that describes a subset of the adelic points of a variety over a number
field containing the closure of the rational (or integral) points and is conjectured, for hyperbolic curves (Stoll, 
\cite{Stoll} in the projective case and Harari and Voloch \cite{HV} in the affine case) to equal that closure. 
It's not unreasonable to conjecture the
same for all anabelian varieties. The relationship between the
finite descent obstruction and the section
conjecture in anabelian geometry has been discussed by
Harari and Stix \cite{HS}, Stix \cite{Stix}, Section 11 and others. We will review the
relevant definitions below, although our point of view will be slightly different.

The purpose of this paper is to study the section conjecture of anabelian geometry and the finite descent obstruction
for the moduli spaces of principally polarized abelian varieties and of curves over number fields. For
the moduli of abelian varieties we show
that the section conjecture fails in general and
that both the section conjecture and finite descent obstruction hold for a general class of adelic points,
assuming many established conjectures in arithmetic geometry  (specifically, we assume the 
Hodge, Tate, Fontaine-Mazur and Grothendieck-Serre conjectures, in the precise forms stated in section \ref{abvar}). 
This is done by converting the question into one about Galois representations. 

The section conjecture predicts that sections of the fundamental exact sequence (Section \ref{abvar}, eq. \ref{fund})
 of an anabelian variety
over a number field correspond to rational points. In this paper, we look at the sections of the 
fundamental exact sequence of the moduli spaces of principally polarized abelian varieties that, locally
at every place of the ground field come from a point rational over the completion, which moreover is integral
for all but finitely many places. This set is denoted  $S_0(K,\A_g)$ and defined precisely
at the end of Section \ref{prelims}. We explain, in section \ref{abvar}, how
sections of the 
fundamental exact sequence of the moduli spaces of principally polarized abelian varieties correspond to
Galois representations and prove, Theorem \ref{coro}, the following result.

\begin{thm}
Assume the Hodge, Tate, Fontaine-Mazur, and Grothendieck-Serre conjectures. Let $K$ be a number field.
Suppose $s \in S_0(K,\A_g)$ gives rise to a system of $\ell$-adic Galois representations one of which is absolutely irreducible. Then there exists, up to isomorphism, a unique principally polarized abelian variety which, viewed as
point of $\A_g(K)$, induces (up to conjugation) the section $s$.
\end{thm}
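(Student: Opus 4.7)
My plan is to convert the section $s$ into a compatible system of $\ell$-adic Galois representations of $G_K$, invoke the listed conjectures to algebraize that system as the Tate module of a principally polarized abelian variety $A/K$, and then argue the correspondence $s \leftrightarrow A$ directly.

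First, the outer action of $\pi_1(\A_g)$ on the Tate module of the universal principally polarized abelian variety yields, for each prime $\ell$, a homomorphism $\pi_1(\A_g) \to \mathrm{GSp}_{2g}(\Z_\ell)$. Composing with $s$ gives a Galois representation $\rho_\ell : G_K \to \mathrm{GSp}_{2g}(\Z_\ell)$. Because $s \in S_0(K,\A_g)$, at every place $v$ the restriction $s|_{G_{K_v}}$ comes from a point of $\A_g(K_v)$, hence from a principally polarized abelian variety $A_v/K_v$, so $\rho_\ell|_{G_{K_v}} \simeq T_\ell A_v \otimes \Ql$. In particular, $\rho_\ell$ is unramified almost everywhere and is de Rham at primes above $\ell$ with Hodge--Tate weights in $\{0,1\}$ of multiplicity $g$; it has exactly the local shape of a representation coming from a principally polarized abelian variety over $K$.

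Next, by the Fontaine--Mazur conjecture each $\rho_\ell$ is the $\ell$-adic realization of a motive $M$ over $K$; absolute irreducibility (of one $\rho_\ell$) together with the Grothendieck--Serre semisimplicity conjecture determines $M$ up to isomorphism and forces it to be simple. The Hodge realization is of type $\{(0,1),(1,0)\}$ with Hodge numbers $(g,g)$, and is polarizable via the symplectic pairing inherited from $\pi_1(\A_g) \to \mathrm{GSp}_{2g}$. Over $\CC$ this data cuts out a polarized complex abelian variety of dimension $g$; invoking the Hodge conjecture to algebraize and Galois descent via the action on the $\ell$-adic side, one recognizes $M = H^1(A)$ for a principally polarized abelian variety $A/K$, the principal polarization being induced by the symplectic Weil form.

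Finally, the point $[A] \in \A_g(K)$ determines a section $s_A$ which, by construction, produces the same system $\{\rho_\ell\}$ as $s$. The monodromy representation $\pi_1(\A_g)\to \mathrm{GSp}_{2g}(\hat{\mathbb{Z}})$ is essentially injective on the geometric fundamental group, and absolute irreducibility of $\rho_\ell$ removes the remaining centralizer obstruction, so $s$ and $s_A$ are conjugate. For uniqueness: if $A'$ also induces $s$, then $T_\ell A \simeq T_\ell A'$ as symplectic Galois representations, whence Faltings' isogeny theorem (a special case of the Tate conjecture for abelian varieties) produces a Galois-equivariant isogeny respecting principal polarizations, which must be an isomorphism. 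The main obstacle is the passage in the second paragraph: transferring the abstract motive produced by Fontaine--Mazur into an abelian variety defined over the number field $K$, compatibly with its principal polarization. Recognizing the Hodge type is straightforward, but algebraizing the resulting complex torus over $\overline{K}$ and descending it compatibly with the symplectic form, the Galois action, and the $K$-rational structure is where the combined force of the Hodge, Grothendieck--Serre, and (implicitly, via Faltings) Tate conjectures is really needed.
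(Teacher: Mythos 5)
Your overall strategy -- convert $s$ into a compatible system of $\ell$-adic Galois representations, algebraize via the Fontaine--Mazur, Tate, Grothendieck--Serre, and Hodge conjectures, descend to $K$, and then argue uniqueness via Faltings -- matches the architecture of the paper's proof (Theorem~\ref{main} feeding into Theorem~\ref{coro}). But the sketch has several genuine gaps where the paper does real work.

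First, your argument never leaves the isogeny class: producing an abelian variety $A/K$ with $V_\ell(A)\cong s_\ell\otimes\Q_\ell$ is not enough. The section $s$ records the \emph{integral} Tate modules $T_\ell$, so one must exhibit $B$ in the isogeny class of $A$ with $T_\ell(B)\cong s_\ell$ as $\Z_\ell[G_K]$-modules for all $\ell$, and this is not automatic. The paper handles it by noting that absolute irreducibility of one $\rho_\ell$ forces $\End(A)=\Z$; Zarhin's result then gives that $A[\ell]$ is absolutely simple for almost all $\ell$, so all Galois-stable lattices in $V_\ell(A)$ are $\ell^m T_\ell(A)$, and an argument of Deligne reduces matching all $T_\ell$ to matching almost all. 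Without this step your $A$ only lands in $\A_g(K)$ up to isogeny, not at the point prescribed by $s$.

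Second, you acknowledge that algebraizing the complex torus and descending to $K$ is ``the main obstacle,'' but then you do not close that gap, and the steps you omit are precisely where the paper's labor lies. Fontaine--Mazur produces a motive with coefficients in some number field $E$, not in $\Q$; the paper must run an obstruction/Brauer-group argument (Hilbert 90 plus the structure of $\mathrm{Br}(\Q)$, using $S=\emptyset$) to descend to $\Q$-coefficients before any Hodge-theoretic step. Then, after Riemann and the Hodge conjecture give an abelian variety over $\CC$, one needs a countability/specialization argument (Lemma~\ref{isogenyCount}, via a fine moduli space with theta structures) to get a model over $\overline{\Q}$, and then a further passage through a finite extension $L/K$ and restriction of scalars with Frobenius reciprocity to land over $K$ itself. ``Galois descent via the action on the $\ell$-adic side'' does not by itself accomplish this.

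Third, the uniqueness argument is too quick. Faltings gives $\Hom(B,C)\otimes\Z_\ell\cong\Hom_{G_K}(T_\ell B, T_\ell C)$; passing from a $\Z_\ell$-linear isomorphism of Tate modules to an actual element of $\Hom(B,C)$ that is an isogeny of degree prime to $\ell$ requires a density/openness argument. One then uses $\End(B)=\Z$ to see $\Hom(B,C)=\Z\cdot\psi$ is free of rank one and deduces $\deg\psi$ is prime to \emph{every} $\ell$, hence $\psi$ is an isomorphism. Your appeal to ``respecting principal polarizations, which must be an isomorphism'' is not something Faltings hands you directly; the polarization compatibility has to be extracted, and in any case the paper avoids it entirely by the degree argument above. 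Section~\ref{twists} of the paper shows that when $\End(A)\neq\Z$ one can have non-isomorphic abelian varieties with isomorphic Tate modules for all $\ell$, so the absolute irreducibility hypothesis (forcing $\End=\Z$) is doing essential work in the uniqueness that your sketch leaves implicit.
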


We also give examples 
(see Theorems \ref{counterexample} and \ref{counterexample2}) showing that weaker versions of the above result do not hold. Specifically,
the local conditions cannot be weakened to hold almost everywhere, for instance.

For the moduli of curves, we show how combining some of our results and assuming
sufficiency of finite descent obstruction for the moduli of curves, we deduce
the sufficiency of finite descent obstruction for all hyperbolic curves.

In the next section we give more precise definitions of the objects we use and in the following two sections we
give the applications mentioned above.

\section{Preliminaries}
\label{prelims}

Let $X/K$ be a smooth geometrically connected variety over a field $K$.
Let $G_K$ be the absolute Galois group of $K$ and $\bar{X}$ the base-change of $X$ to an algebraic closure
of $K$. We denote by $\pi_1(.)$ the algebraic fundamental group functor
on (geometrically pointed) schemes and we omit base-points from the
notation. We have the fundamental exact sequence
\begin{equation}
\label{fund}
1 \rightarrow \pi_1(\bar{X}) \rightarrow \pi_1(X) \rightarrow G_K \rightarrow 1.
\end{equation}
The map $p_X: \pi_1(X) \rightarrow G_K$ from the above sequence is obtained by functoriality from the
structural morphism $X \to \Spec K$. Grothendieck's anabelian program is to specify a class of varieties,
termed anabelian, for which the varieties and morphisms between them can be recovered from the corresponding
fundamental groups together with the corresponding maps $p_X$ when the ground field is finitely generated over
$\mathbb{Q}$. As this is very vague, we single out here two special cases with precise statements. The first is a
(special case of a) theorem of Mochizuki \cite{M}
which implies part of Grothendieck's conjectures for curves but also
extends it by considering $p$-adic fields.

\begin{thm}
\label{moch}
(Mochizuki) Let $X,Y$ be smooth projective curves of genus bigger than one
over a field $K$ which is a subfield of a finitely generated extension of $\Q_p$. If
there is an isomorphism from $\pi_1(X)$ to $\pi_1(Y)$ inducing the identity on $G_K$ via $p_X,p_Y$, then
$X$ is isomorphic to $Y$.
\end{thm}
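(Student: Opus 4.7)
The plan is to recover $X$ from $\pi_1(X) \to G_K$ in stages, starting from its abelianization and working up to the full scheme structure, with $p$-adic Hodge theory as the essential input throughout.

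First I would reduce to the case that $K$ is itself finitely generated over $\Q_p$: the two curves, their Jacobians, and any $\pi_1$-isomorphism are defined over such a subfield, and the $G_K$-equivariance of the isomorphism makes descent to the original $K$ routine. The abelianization $\pi_1(\bar X)^{\mr{ab}}$ is canonically identified with the product of the Tate modules $T_\ell \mathrm{Jac}(X)$ as a $G_K$-module, so the hypothesized isomorphism yields compatible $G_K$-equivariant isomorphisms $T_\ell \mathrm{Jac}(X) \cong T_\ell \mathrm{Jac}(Y)$ for every prime $\ell$. Faltings' isogeny theorem in the $p$-adic setting then produces a $K$-isogeny of Jacobians. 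The principal polarizations are encoded in the Weil pairing, hence in the cup product on $\pi_1^{\mr{ab}}$; one can arrange for them to correspond, and classical Torelli delivers an isomorphism $X_{\bar K} \cong Y_{\bar K}$.

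Next, and here lies the real heart of the matter, one must upgrade this to an isomorphism of $K$-schemes induced by the prescribed $\pi_1$-isomorphism. The Tate-module route discards essentially all of the non-abelian content of $\pi_1$, and Torelli provides only an isomorphism up to the hyperelliptic involution and a priori only over $\bar K$. Mochizuki's contribution is a genuine group-theoretic reconstruction of the scheme from the profinite group. I would follow his strategy of isolating, inside $\pi_1(X)$, the decomposition subgroups attached to closed points of $X$ by purely Galois-theoretic characterizations (Hodge-Tate weights of subquotients of the residual Galois action, the shape of the induced $p$-adic absolute values on residue fields), then reconstructing the function field and its discrete valuations in the spirit of Uchida-Neukirch-Pop, and finally promoting this reconstructed field with its valuation data to a scheme isomorphism $X \cong Y$ over $K$ compatible with the given isomorphism of fundamental groups.

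The main obstacle is precisely this last stage: distinguishing the geometric from the arithmetic content inside the profinite group $\pi_1(X)$ and recognizing decomposition subgroups intrinsically. This is what requires $p$-adic Hodge theory in an essential way, and it is what constitutes the substance of Mochizuki's theorem rather than formal consequences of Faltings plus Torelli.
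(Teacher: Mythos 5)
The paper does not prove Theorem \ref{moch}; it is quoted as Mochizuki's theorem with a citation to \cite{M} and is used as a black box in the rest of the paper, so there is no in-paper argument to compare your sketch against.

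As for your sketch on its own terms, the first paragraph rests on a step that fails. Faltings' isogeny theorem --- bijectivity of $\Hom(A,B)\otimes\Z_\ell \to \Hom_{G_K}(T_\ell A, T_\ell B)$ --- is a theorem over fields finitely generated over $\Q$ (or over $\mathbb{F}_p$); there is no usable ``$p$-adic setting'' analogue. Over a $p$-adic local field $K$, for $\ell \ne p$ the $G_K$-module $T_\ell A$ of an abelian variety with good reduction is unramified and sees only the special fibre, so non-isogenous abelian varieties routinely have isomorphic $T_\ell$ for all $\ell \ne p$, and $\End_{G_K}(T_\ell A)$ is in general far larger than $\End(A)\otimes\Z_\ell$; the module $T_p A$ is richer but determines only the $p$-divisible group, which again does not pin down the isogeny class of $A$. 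Consequently the abelianization $\pi_1(\overline{X})^{\mathrm{ab}} \cong \prod_\ell T_\ell \mathrm{Jac}(X)$ carries far too little information over a local field for a Faltings--Torelli argument to get off the ground. This is precisely why the sub-$p$-adic version of the theorem is hard: Mochizuki's proof does not pass through the Jacobian at all, but works with the full pro-$p$ geometric fundamental group and $p$-adic Hodge theory (Hodge--Tate decompositions and their refinements) to manufacture a morphism of curves directly from a homomorphism of arithmetic fundamental groups. Your second paragraph invokes the right general toolbox, but what it describes is closer to the Neukirch--Uchida--Pop decomposition-group strategy for birational anabelian geometry over global and finite fields, which is a genuinely different circle of ideas from Mochizuki's local argument. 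So the sketch would need to be rebuilt from the ground up; in particular, nothing useful can be extracted from the abelianized $\pi_1$ at the first stage.
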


A point $P \in X(K)$ gives, by functoriality, a section
$G_K \rightarrow \pi_1(X)$ of the fundamental exact sequence
(\ref{fund}) well-defined up to conjugation by an element of $\pi_1(\bar{X})$
(the indeterminacy is because of base points).

We denote by $H(K,X)$ the set of sections $G_K \rightarrow \pi_1(X)$ modulo conjugation by $\pi_1(\bar{X})$ and
we denote by $\sigma_{X/K}: X(K) \to H(K,X)$ the map that associates
to a point the class of its corresponding section, as above,
and we call it the section map.
As part of the anabelian program, it is expected that
$\sigma_{X/K}$ is a bijection if $X$ is projective, anabelian
and $K$ is finitely generated over its prime field. This is widely
believed in the case of hyperbolic curves over number fields and
is usually referred as the section conjecture. 
For a similar statement
in the non-projective case, one needs to consider the so-called cuspidal
sections, 
see \cite{Stix}, Section 18. 
Although we will discuss non-projective varieties
in what follows, we will not need to specify the notion of cuspidal sections.
The reason for this is that we will be considering sections that locally come
from points (the Selmer set defined below) and these will not be cuspidal.

We remark that the choice of a particular section
$s_0: G_K \rightarrow \pi_1(X)$ induces an action of $G_K$ on
$\pi_1(\bar{X}), x \mapsto s_0(\gamma)xs_0(\gamma)^{-1}$.
For an arbitrary section $s:G_K \rightarrow \pi_1(X)$
the map $\gamma \mapsto s(\gamma)s_0(\gamma)^{-1}$
is a $1$-cocycle for the above action of
$G_K$ on $\pi_1(\bar{X})$ and this induces a bijection
$H^1(G_K,\pi_1(\bar{X})) \to H(K,X)$. We stress that this only holds
when $H(K,X)$ is non-empty and a choice of $s_0$ can be made.
It is possible for $H(K,X)$ to be empty, in which case there is
no natural choice of action of $G_K$ on $\pi_1(\bar{X})$
in order to define $H^1(G_K,\pi_1(\bar{X}))$ which would be non-empty
in any case, if defined.

Let $X/K$ be as above, where $K$ is now a number field.
If $v$ is a place of $K$, we have the completion $K_v$ and
a fixed inclusion $\overline{K} \subset \overline{K}_v$ induces a map $\alpha_v: G_{K_v} \to G_K$
and a map $\beta_v: \pi_1(X_v) \to \pi_1(X)$,
where $X_v$ is the base-change of $X$ to $K_v$.
We define the Selmer set of $X/K$ as the set $S(K,X) \subset H(K,X)$ consisting of the equivalence
classes of sections $s$ such that for all places $v$, there exists $P_v \in X(K_v)$ with
$s \circ \alpha_v = \beta_v \circ \sigma_{X_v/K_v}(P_v)$.
Note that if $v$ is complex, then the condition at $v$ is vacuous and that if $v$ is real,
$\sigma_{X_v/K_v}$ factors through $X(K_v)_\bullet$,
the set of connected components of $X(K_v)$,
equipped with the quotient topology (see \cite{M2,Pal}). In the non-archimedian case,
$X(K_v)$ is totally disconnected so $X(K_v)=X(K_v)_\bullet$ and we have the following diagram:

\[
\xymatrix{
& X(K) \ar[r] \ar[d]_{\sigma_{X/K}} & \prod X(K_v)_\bullet \ar[d]^{\prod \sigma_{X_v/K_v}}& \supset X^f \\
S(K,X) \subset & H(K,X) \ar[r]^{\alpha} & \prod H(K_v,X_v) . \\
}
\]

We define the set $X^f$ (the finite descent obstruction) as the set of points $(P_v)_v \in \prod_v X(K_v)_\bullet$
for which there exists $s \in H(K,X)$ (which is then necessarily an element of $S(K,X)$) satisfying
$s \circ \alpha_v = \beta_v \circ \sigma_{X_v/K_v}(P_v)$ for all places $v$.
Also, it is clear that the image of $X(K)$ is contained in $X^f$.  At least when $X$ is proper, 
$X^f$ is closed (this follows from the compactness of $H(K,X)$, \cite{Stix}, Cor. 45). In that case, one
may consider whether the closure of the image of $X(K)$ in $\prod X(K_v)_\bullet$
equals $X^f$. A related statement is the equality $\sigma_{X/K} (X(K)) = S(K,X)$,
which is implied by the ``section conjecture'', i.e., the bijectivity of $\sigma_{X/K}:X(K) \to H(K,X)$. 
As a specific instance of this relation, we record the following easy fact:

\begin{prop}
\label{descobs}
We have that $X^f = \emptyset$ if and only if $S(K,X) = \emptyset$. 
\end{prop}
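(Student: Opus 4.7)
The plan is to unpack the two definitions and observe that the proposition is essentially tautological, with the only subtlety being the passage between $X(K_v)$ and its quotient $X(K_v)_\bullet$ at archimedean places.

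For the direction $X^f = \emptyset \Rightarrow S(K,X) = \emptyset$, I would prove the contrapositive. Suppose $s \in S(K,X)$. By definition of the Selmer set, for each place $v$ there exists a point $P_v \in X(K_v)$ satisfying $s \circ \alpha_v = \beta_v \circ \sigma_{X_v/K_v}(P_v)$. Let $\bar{P}_v$ denote the image of $P_v$ in $X(K_v)_\bullet$. Because $\sigma_{X_v/K_v}$ factors through $X(K_v)_\bullet$ in the archimedean case (and the quotient map is the identity otherwise, since $X(K_v)$ is totally disconnected), we still have $s \circ \alpha_v = \beta_v \circ \sigma_{X_v/K_v}(\bar{P}_v)$ for all $v$. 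Hence $(\bar{P}_v)_v \in \prod_v X(K_v)_\bullet$ is witnessed by $s$ to lie in $X^f$, so $X^f \neq \emptyset$.

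Conversely, if $X^f \neq \emptyset$, pick any $(\bar{P}_v)_v \in X^f$. By definition there is some $s \in H(K,X)$ with $s \circ \alpha_v = \beta_v \circ \sigma_{X_v/K_v}(\bar{P}_v)$ for every $v$. Lifting each $\bar{P}_v$ to a point $P_v \in X(K_v)$ (which is possible since $X(K_v) \to X(K_v)_\bullet$ is surjective) and invoking again the factorization of $\sigma_{X_v/K_v}$ through $X(K_v)_\bullet$, we see $\sigma_{X_v/K_v}(P_v) = \sigma_{X_v/K_v}(\bar{P}_v)$, so the $P_v$ witness the membership of $s$ in $S(K,X)$. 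Thus $S(K,X) \neq \emptyset$.

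The only conceptual point worth flagging is ensuring the compatibility is preserved under the quotient $X(K_v) \to X(K_v)_\bullet$, which is guaranteed by the remark already cited in the text. Apart from this, no step poses any real obstacle: the proposition records what is immediately visible from the definitions.
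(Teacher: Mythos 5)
Your proof is correct and follows essentially the same definition-unwinding argument the paper gives; the paper simply passes silently between $X(K_v)$ and $X(K_v)_\bullet$, whereas you spell out the archimedean quotient explicitly, which is a harmless (and arguably cleaner) elaboration.
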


\begin{proof}
If $X^f \ne \emptyset$ and $(P_v) \in X^f$, then there exists
$s \in S(K,X)$ with $s \circ \alpha_v = \beta_v \circ \sigma_{X_v/K_v}(P_v)$
for all places $v$, so $S(K,X) \ne \emptyset$.

If $s \in S(K,X)$, there exists
$(P_v)$ with $s \circ \alpha_v = \beta_v \circ \sigma_{X_v/K_v}(P_v)$
for all places $v$. So $(P_v) \in X^f$.
\end{proof}

If $X$ is not projective, then one has to take into account questions of integrality.
We choose an integral model ${\cal{X}}/{\cal{O}}_{S,K}$, where $S$ is a finite set of
places of $K$ and ${\cal{O}}_{S,K}$ is the ring of $S$-integers of $K$. The image of
$X(K)$ in $X^f$ actually lands in the adelic points which are the points that
satisfy $P_v \in {\cal{X}}({\cal{O}}_v)$ for all but finitely many $v$, where ${\cal{O}}_v$
is the local ring at $v$. Similarly, the image of $\sigma_{X/K}$ belongs to the subset
of $S(K,X)$ where the corresponding local points $P_v$ also belong to
${\cal{X}}({\cal{O}}_v)$ for all but finitely many $v$. We denote this subset of $S(K,X)$
by $S_0(K,X)$ and call it the integral Selmer set. We note that $S_0(K,X)$ is
independent of the choice of the model ${\cal{X}}$.

We recall here some basic notions about the Tate module of abelian varieties which will
be used in the next two sections, in order to set notation. 
If $A$ is an abelian variety over the field $K$ then we write $\End(A)$ for its ring of all $K$-endomorphisms and $\End^0(A)$ for the corresponding (finite-dimensional semisimple) $\Q$-algebra $\End(A)\otimes\Q$. 
If $n \ge 3$ is an integer that is not divisible by $\fchar(K)$ and all points of order $n$ on $A$ are defined
over $K$ then, by a theorem of Silverberg \cite{silverberg}, all $\bar{K}$-endomorphisms of $A$ are  
defined over $K$, i.e., lie in $\End(A)$.

If $\ell$ is a prime different from $\fchar(K)$ then we write $T_{\ell}(A)$ for the 
$\Z_{\ell}$-Tate module of $A$ which is a free $\Z_{\ell}$-module of rank $2\dim(A)$ provided with the natural continuous homomorphism
$$\rho_{\ell,A}: G_K \to \Aut_{\Z_{\ell}}(T_{\ell}(A))$$
and the $\Z_{\ell}$-ring embedding
$$e_l:\End(A)\otimes \Z_{\ell} \hookrightarrow \End_{\Z_{\ell}}(T_{\ell}(A)).$$
The image of $\End(A)\otimes \Z_{\ell}$ commutes with $\rho_{\ell,A}(G_K)$. Tensoring by $\Q_{\ell}$ (over $\Z_{\ell}$), we obtain the $\Q_{\ell}$-Tate module of $A$
$$V_{\ell}(A)=T_{\ell}(A)\otimes_{\Z_{\ell}}\Q_{\ell},$$
which is a $2\dim(A)$-dimensional $\Q_{\ell}$-vector space containing
$$T_{\ell}(A)=T_{\ell}(A)\otimes 1$$ as a $\Z_{\ell}$-lattice. We may view $\rho_{\ell,A}$ as an $\ell$-adic representation
$$\rho_{\ell,A}: G_K\to \Aut_{\Z_{\ell}}(T_{\ell}(A))\subset \Aut_{\Q_{\ell}}(V_{\ell}(A))$$
and extend $e_{\ell}$ by $\Q_{\ell}$-linearity to the embedding of $\Q_{\ell}$-algebras
$$\End^0(A)\otimes_{\Q}\Q_{\ell}=\End(A)\otimes \Q_{\ell} \hookrightarrow \End_{\Q_{\ell}}(V_{\ell}(A)),$$
which we still denote by $e_{\ell}$. Further we will identify $\End^0(A)\otimes_{\Q}\Q_{\ell}$ with its image in $\End_{\Q_{\ell}}(V_{\ell}(A))$.

This provides $V_{\ell}(A)$ with the natural structure of $G_K$-module; in addition, 
$\End^0(A)\otimes_{\Q}\Q_{\ell}$ is a $\Q_{\ell}$-(sub)algebra of endomorphisms of the Galois module $V_{\ell}(A)$. In other words,
$$\End^0(A)\otimes_{\Q}\Q_{\ell}\subset
\End_{G_K}(V_{\ell}(A)).$$

\section{Moduli of abelian varieties}
\label{abvar}

The moduli space of principally polarized abelian varieties of dimension $g$ is denoted by $\A_g$. It is actually
a Deligne-Mumford stack or orbifold and we will consider its fundamental group as such.
For a general definition of fundamental groups of stacks including a proof
of the fundamental exact sequence in this generality, see \cite{Zoo}.
For a discussion of the case of $\A_g$, see \cite{Hain}. We can also get
what we need from \cite{IN} (see below) or by working with a level structure
which bring us back to the case of smooth varieties.

As $\A_g$ is defined
over $\Q$, we can consider it over an arbitrary number field $K$.  As per our earlier
conventions, $\bar{\A}_g$ is the base change of $\A_g$ to an algebraic closure of $\Q$ and not a compactification.
In fact, we will not consider a compactification at all here. The topological fundamental group of $\bar{\A}_g$ is
the symplectic group $Sp_{2g}(\Z)$ and the algebraic fundamental group is its profinite completion.
When $g>1$ (which we henceforth assume) $Sp_{2g}(\Z)$ has
the congruence subgroup property (\cite{BLS},\cite{Me}) and therefore
its profinite completion is $Sp_{2g}(\hat{\Z})$.

The group $\pi_1(\A_g)$ is essentially described by the exact sequences (3.2) and (3.3) of
\cite{IN} and it follows that
the set $H(K,\A_g)$ consists of $\hat{\Z}$ representations of $G_K$ of rank $2g$
preserving the symplectic form up to a multiplier given by the cyclotomic character.
Indeed, it is clear that every section gives such a representation and
the converse follows formally from the diagram below, which is a consequence of (3.2) and (3.3) of
\cite{IN}.

In the following we denote the cyclotomic character by $\chi:G_K \to \hat{\Z}^*$.
\[
\xymatrix{
&1 \ar[r] & \pi_1(\bar{\A}_g) \ar[r] \ar[d]^{\cong} & \pi_1({\A}_g) \ar[r] \ar[d] & G_K \ar[r] \ar[d]_{\chi} &1 \\
&1 \ar[r] & Sp_{2g}(\hat{\Z}) \ar[r] & GSp_{2g}(\hat{\Z}) \ar[r] & \hat{\Z}^* \ar[r] &1. \\
}
\]

The coverings of $\bar{\A}_g$ corresponding to the congruence
subgroups of $Sp_{2g}(\hat{\Z})$ are those obtained by adding level
structures. In particular, for an abelian variety $A$,
$\sigma_{\A_g/K}(A) = \prod T_{\ell}(A)$, the product of its Tate
modules considered, as usual, as a $G_K$-module. If $K$ is a number
field, whenever two abelian varieties are mapped to the same point
by $\sigma_{\A_g/K}$, then they are isogenous, by Faltings
(\cite{Faltings}).
The finiteness of  isogeny classes of polarized abelian varieties
over $K$ \cite{Faltings} (see also \cite{Zar_Inv85}) implies that
for any given $K$ and $g$ every fiber of $\sigma_{\A_g/K}$ is
finite.
On the other hand,  $\sigma_{\A_g/K}$ is not necessarily injective to $S_0(K,\A_g)$,  
 see Sect. \ref{twists}. For example,  for
each $g$ there exists $K$ with non-injective $\sigma_{\A_g/K}$.
Regarding surjectivity, we will prove that those elements of $S_0(K,\A_g)$
for which the corresponding Galois representation is absolutely irreducible (see below for the
precise hypothesis and theorem \ref{coro} for a precise statement) are in the image of $\sigma_{\A_g/K}$, assuming
the Fontaine-Mazur conjecture, the Grothendieck-Serre conjecture on
semi-simplicity of $\ell$-adic cohomology of smooth projective varieties, and the Tate and Hodge conjectures.
The integral Selmer set $S_0(K,\A_g)$, defined in the previous section, corresponds
to the set of Galois
representations that are almost everywhere unramified and, locally, come from
abelian varieties (which thus are of good reduction for almost all places of $K$)
and we will also
consider a few variants of the question of surjectivity of $\sigma_{\A_g/K}$ to $S_0(K,\A_g)$
by different local hypotheses and discuss what we can and cannot prove.
A version of this kind of question has also been considered by B. Mazur \cite{Mz}.

Here is the setting. Let $K$ be a number field, with $\gal{K}= \Gal(\overline{K}/K)$.
Fix a finite set of rational primes $S$, and consider a collection of continuous $\ell$-adic representations
\[
\{\rho_{\ell} \colon \gal{K} \to \mr{GL}_N(\Ql) \}_{\ell \not \in S}.
\]
We will say that the collection $\{\rho_{\ell}\}_{\ell \not \in S}$ is \textit{weakly compatible} if there exists a finite set of places $\Sigma$ of $K$ such that
\begin{enumerate}
\item[I.] for all $\ell \not \in S$, $\rho_{\ell}$ is unramified outside the union of $\Sigma$ and the places $\Sigma_{\ell}$ of $K$ dividing $\ell$; and
\item[II.] for all $v \not \in \Sigma \cup \Sigma_{\ell}$, denoting by $fr_v$ a (geometric) frobenius element at $v$, the characteristic polynomial of $\rho_{\ell}(fr_v)$ has rational coefficients and is independent of $\ell \not \in S$. \footnote{These systems were introduced  by Serre \cite{Serre}, who called them {\sl strictly compatible}.}
\end{enumerate}
We will assume $\{\rho_{\ell}\}_{\ell \not \in S}$ is weakly compatible in this sense and moreover satisfies the following three conditions:
\begin{enumerate}
\item For some prime $\ell_0 \not \in S$, $\rho_{\ell_0}$ is de Rham at all places of $K$ above $\ell_0$.
\item For some prime $\ell_1 \not \in S$, $\rho_{\ell_1}$ is absolutely irreducible.
\item For some prime $\ell_2 \not \in S$, and at least one place $v \vert \ell_2$ of $K$, $\rho_{\ell_2}|_{\gal{K_v}}$ is de Rham with Hodge-Tate weights $-1,0$, each with multiplicity $\frac{N}{2}$. (Note that this condition holds if there exists an abelian variety $A_v/K_v$ such that $\rho_{\ell_2}|_{\gal{K_v}} \cong V_{\ell_2}(A_v)$.
\end{enumerate}
We note that the $\ell_i$ need not be distinct, and in fact in the anabelian application (Theorem \ref{coro}) they can all be taken to be equal; we have phrased things this way to have hypotheses that most clearly match the form of the argument. 

Our aim is to prove the following:
\begin{thm}\label{main}
Assume the Hodge, Tate, Fontaine-Mazur, and Grothendieck-Serre conjectures, and suppose that the set $S$ is empty. Then there exists an abelian variety $A$ over $K$ such that $\rho_{\ell} \cong V_{\ell}(A)$ for all $\ell$.
\end{thm}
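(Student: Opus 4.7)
The strategy is to run the standard motivic dictionary provided by the four conjectures. First, by condition I of weak compatibility and the assumption $S=\emptyset$, the representation $\rho_{\ell_0}$ is unramified outside a finite set of places, and by hypothesis 1 it is de Rham at every place above $\ell_0$; the Fontaine--Mazur conjecture therefore identifies $\rho_{\ell_0}$ with a subquotient of $H^i(X_{\overline{K}},\Ql)(j)$ for some smooth projective $X/K$ and integers $i,j$. Grothendieck--Serre semisimplicity promotes this to a direct summand, and the Tate conjecture represents the cutting-out projector by an algebraic correspondence on $X\times X$. The upshot is a Grothendieck motive $M$ over $K$ whose $\ell_0$-adic realization $M_{\ell_0}$ is isomorphic to $\rho_{\ell_0}$.

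Next I would match $\rho_\ell$ with the $\ell$-adic realization $M_\ell$ for every $\ell$. The Frobenius traces of $M_\ell$ at places of good reduction are rational and independent of $\ell$ by the very construction of $M$, and weak compatibility of $\{\rho_\ell\}$ then produces the same Frobenius traces for $\rho_\ell$ at almost all places. Since $M_\ell$ is semisimple by Grothendieck--Serre, Chebotarev yields $\rho_\ell^{ss}\cong M_\ell$. At $\ell=\ell_1$, absolute irreducibility of $\rho_{\ell_1}$ forces $\rho_{\ell_1}\cong M_{\ell_1}$, and this common representation is absolutely irreducible; Grothendieck--Serre (combined with Tate) then implies $\End(M)=\Q$, so $M_\ell$ is absolutely irreducible for every $\ell$, from which one concludes $\rho_\ell\cong M_\ell$ since an irreducible representation coincides with its own semisimplification.

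Third, I would use hypothesis 3 to recognize $M$ as an abelian-variety motive. Being motivic, $M$ has an $\ell_2$-adic realization that is de Rham at all places above $\ell_2$; the $p$-adic de Rham comparison matches its Hodge--Tate weights at the chosen $v\vert\ell_2$ with the jumps of the Hodge filtration on $M_{dR}$, which in turn agree with the Hodge numbers of the Betti realization $M_B$ via the classical comparison. Hence $M_B$ is a polarizable $\Q$-Hodge structure of weight $-1$ with $h^{-1,0}=h^{0,-1}=N/2$, so by the Riemann theorem it is $H_1(A_{\CC},\Q)$ for a complex abelian variety $A_{\CC}$ of dimension $N/2$. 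Applying the Hodge conjecture to $A_{\CC}\times X$ yields an algebraic correspondence realizing this Hodge-theoretic isomorphism as an isomorphism $M\cong h_1(A_{\overline{K}})$ of motives over $\overline{K}$; the $G_K$-action on $M$ then equips $A_{\overline{K}}$ with canonical descent data, and Weil descent for abelian varieties produces $A/K$ with $V_\ell(A)\cong M_\ell\cong\rho_\ell$ for every $\ell$.

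The main obstacle is this last step: producing an abelian variety over $K$ itself, rather than only a motive of the correct Hodge type, and doing so compatibly with the original system $\{\rho_\ell\}$. The earlier steps are by now standard manipulations in the motivic dictionary given our conjectures, but tracking the Galois action through the complex-analytic construction of $A_{\CC}$ and through the Hodge-conjecture-produced correspondence, and then descending from $\overline{K}$ to $K$, is where the real care is needed.
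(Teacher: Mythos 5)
Your outline follows the paper's general arc (Fontaine--Mazur to motive; compatibility plus irreducibility to match realizations; Hodge theory plus Riemann's theorem to recognize an abelian-variety motive; descend), but it silently skips the step where the hypothesis $S=\emptyset$ is actually used, and this is a genuine gap.

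The motive that Fontaine--Mazur, Grothendieck--Serre, and Tate produce is \emph{not} a priori an object of $\mc{M}_K$ with $\Q$-coefficients: the projector cutting $\rho_{\ell_0}$ out of $H^i(X_{\overline K},\Q_{\ell_0})(j)$ need only have coefficients in a finite extension $E/\Q$, and the paper's Lemma~\ref{FM} deliberately produces an object of $\mc{M}_{K,\overline{\Q}}$ (i.e.\ $\mc{M}_{K,E}$ for some number field $E$). Thus one gets an $E$-rational motivic Galois representation $\rho \colon \mc{G}_{K,E} \to \mathrm{GL}_{N,E}$. Since $\rho$ is absolutely irreducible, $\End(\rho)=E$, so the obstruction to descending $\rho$ to $\Q$-coefficients is a class $\mathrm{obs}_{\rho} \in H^1(\Gal(E/\Q),\mathrm{PGL}_N(E))$, equivalently (Hilbert~90) a class in $H^2(\Gal(E/\Q),E^{\times}) \subset \br(\Q)$. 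The paper shows $\mathrm{obs}_{\rho}$ is locally trivial at every finite place $\ell\notin S$ because each $\rho_{\lambda}$ is isomorphic to $\rho_{\ell}\otimes E_{\lambda}$ and hence descends to $\Q_{\ell}$. Only when $S=\emptyset$ does local triviality at \emph{all} finite places, combined with the structure of $\br(\Q)$ (a single archimedean place and the reciprocity law), force $\mathrm{obs}_{\rho}=0$ globally. Your proposal asserts directly that $\End(M)=\Q$ and that $M_B$ is a $\Q$-Hodge structure of rank $N$, but without killing $\mathrm{obs}_{\rho}$ the underlying rational Hodge structure has rank $N\cdot[E:\Q]$ and Riemann's theorem would not produce an abelian variety of the right dimension. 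This is not a ``standard manipulation'' to defer; it is the one place the hypothesis $S=\emptyset$ enters, and its omission leaves a hole in the argument.

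Your final descent from $\CC$ to $K$ is also underspecified, though you flag it as the difficulty. ``Canonical descent data'' plus ``Weil descent'' does not suffice: the motivic isomorphisms $H_1({}^{\sigma}A) \cong H_1(A)$ only yield that ${}^{\sigma}A$ and $A$ are \emph{isogenous}, not isomorphic, so genuine descent data are not available. The paper instead (a) proves a countability lemma (Lemma~\ref{isogenyCount}) using a fine moduli space to conclude $A$ has a model over $\overline{\Q}$, then descends the isomorphism $H_1(A)\cong M$ to a finite extension $L/K$, and (b) descends from $L$ to $K$ by applying $\Res_{L/K}$ to $A_L$, identifying $H_1(\Res_{L/K}A_L)$ with $\Ind_L^K H_1(A_L)$ in motivic Galois representations, and using Frobenius reciprocity plus the simplicity of $M$ to split off the desired direct factor. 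Your sketch does not supply a mechanism accomplishing either step.
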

We begin by making precise the combined implications of the Grothendieck-Serre, Tate, and Fontaine-Mazur conjectures (the Hodge conjecture will only be used later, in the proof of Lemma \ref{hodge}). For any field $k$ and characteristic zero field $E$, let $\mc{M}_{k, E}$ denote the category of pure homological motives over $k$ with coefficients in $E$ (omitting $E$ from the notation will mean $E=\Q$). 
\begin{lemma}\label{motivesrecall}
Assume the Tate conjecture for all finitely-generated extensions $k$ of $\Q$. Then:
\begin{enumerate}
\item The Lefschetz Standard Conjecture holds for all fields of characteristic zero.
\item All of the Standard Conjectures (namely, the K\"{u}nneth and Hodge Standard Conjectures, and the agreement of numerical and homological equivalence) hold for all fields of characteristic zero.
\item For any field $k$ that can be embedded in $\CC$, the category $\mc{M}_{k}$ is a semi-simple neutral Tannakian category over $\Q$. 
\item For any finitely-generated $k/\Q$, the \'{e}tale $\ell$-adic realization functor
\[
\mc{M}_{k, \Q_{\ell}} \to \mathrm{Rep}_{\Q_{\ell}}(G_k),
\]
valued in the category of continuous $\ell$-adic representations of $G_k$, is fully faithful.
\end{enumerate}
\end{lemma}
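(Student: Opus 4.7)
The plan is to collate standard results and implications about motives. Once the Tate conjecture is granted for all finitely generated $k/\Q$, each of the four assertions follows from well-known theorems.

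For (1), I would invoke Andr\'{e}'s theorem that the Tate conjecture over finitely generated extensions of $\Q$ implies the Lefschetz Standard Conjecture in the same range. To extend to an arbitrary characteristic zero field $k$, any smooth projective $X/k$ descends to a smooth projective model $X_0/k_0$ over a finitely generated subfield $k_0 \subset k$; the algebraic cycle on $X_0 \times X_0$ witnessing the inverse to the Lefschetz operator pulls back to one on $X \times X$, and since \'etale cohomology is invariant under base change among algebraically closed fields, Lefschetz propagates to $X/k$.

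For (2), the K\"{u}nneth Standard Conjecture then follows by the classical argument expressing the K\"{u}nneth projectors as polynomials in the Lefschetz operator and its now-algebraic inverse. The Hodge Standard Conjecture in characteristic zero is a theorem, since it reduces via a comparison isomorphism to the Hodge--Riemann bilinear relations on $(p,p)$-classes. Given Lefschetz, K\"{u}nneth, and Hodge Standard, the coincidence of numerical and homological equivalence is the classical consequence (as in Kleiman's article in \emph{Motives}). For (3), Jannsen's theorem that numerical motives form a semisimple abelian category, combined with num$\,=\,$hom from (2), gives semisimplicity of $\mc{M}_k$; the algebraicity of the K\"{u}nneth projectors supplies a corrected commutativity constraint making $\mc{M}_k$ Tannakian; and any embedding $k \hookrightarrow \CC$ provides Betti cohomology as a neutral fiber functor over $\Q$.

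For (4), I would reduce via internal Homs to showing that for every motive $P$ over $k$ the map
\[
\mathrm{Hom}_{\mc{M}_{k, \Q_\ell}}(\mathbf{1}, P) \to H_\ell(P)^{\gal{k}}
\]
is bijective. Injectivity is tautological since homological equivalence is defined through the $\ell$-adic cycle class map, making the realization functor faithful. Surjectivity is exactly the Tate conjecture applied to a smooth projective model of $P$, with Grothendieck--Serre semisimplicity of $H_\ell(P)$ ensuring that the Galois invariants admit a Galois-stable complement, so that the idempotents cut out by the algebraic cycles one produces actually live in $\mc{M}_{k, \Q_\ell}$. The main subtlety is (4): one must check that Tate together with Grothendieck--Serre are jointly strong enough to make realization both full and faithful, not merely conservative; everything else is a matter of quoting the right theorems from the literature on motives.
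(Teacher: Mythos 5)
Your sketch of parts (1)--(3) is essentially the argument behind the references the paper cites (Andr\'e, Jannsen): Tate implies Lefschetz, spread out from a finitely generated subfield; Lefschetz plus the characteristic-zero Hodge Standard Conjecture gives K\"unneth and num $=$ hom by Kleiman's classical argument; then Jannsen plus the sign-corrected commutativity constraint gives a semi-simple neutral Tannakian category. That matches the paper.

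Part (4) has two genuine problems. First, faithfulness with $\Q_\ell$-coefficients is \emph{not} tautological. Faithfulness of the realization on $\mc{M}_k$ itself (with $\Q$-coefficients) is by definition of homological equivalence, but what you need here is injectivity of
\[
\Hom_{\mc{M}_k}(M,N)\otimes_{\Q}\Q_\ell \longrightarrow \Hom_{\Q_\ell[G_k]}\bigl(H_\ell(M),H_\ell(N)\bigr),
\]
and an injective $\Q$-linear map into a $\Q_\ell$-vector space need not stay injective after extending scalars to $\Q_\ell$ (e.g.\ $\Q^2\to\Q_\ell$, $(a,b)\mapsto a+b\sqrt{2}$). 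The correct argument, which the paper invokes, is that numerical $=$ homological equivalence (from part (2)) together with Tate's Lemma 2.5 forces $\Q$-linearly independent algebraic classes to remain $\Q_\ell$-linearly independent in cohomology. Second, you have dragged in Grothendieck--Serre semi-simplicity, which is not among the lemma's hypotheses and is not needed. Fullness reduces via internal Hom to surjectivity of $\Hom(\mathbf{1},P)\otimes\Q_\ell\to H_\ell(P)^{G_k}$; for $P$ a direct summand of some $H^{2i}(X)(i)$ cut out by an algebraic idempotent, the Tate conjecture gives the statement for $H^{2i}(X)(i)$ and one projects to $P$ using that idempotent --- no Galois-stable complement or semi-simplicity of $H_\ell(P)$ is required. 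Grothendieck--Serre first enters later in the paper (Lemma \ref{FM}), where one needs to upgrade an arbitrary sub-quotient to a direct summand; conflating that step with full faithfulness is the source of the confusion.
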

\proof
For the first assertion, see, eg, \cite[7.3.1.3]{andre:motifs}); for the second, see \cite[5.4.2.2]{andre:motifs}. The third part is the basic motivating consequence of the Standard Conjectures (a fiber functor over $\Q$ is given by Betti cohomology, after fixing an embedding $k \into \CC$): see \cite[Corollary 2]{jannsen}, especially for the semi-simplicity claim. Finally, for the last part, fullness is the Tate conjecture; and faithfulness follows from the agreement of numerical and homological equivalence and \cite[Lemma 2.5]{tate:motives} (note that faithfulness on $\mc{M}_k$ is simply by definition of homological equivalence: it is only with $\Q_{\ell}$-coefficients that some argument is needed).
\endproof  
For the rest of this section, we assume the Tate conjecture for all finitely-generated $k$ of characteristic zero. Thus, we have a motivic Galois formalism: $\mc{M}_{k, E}$ is equivalent to  $\Rep(\mc{G}_{k, E})$ for some pro-reductive group $\mc{G}_{k, E}$ over $E$, the equivalence depending on the choice of an $E$-linear fiber functor. We will implicitly fix an embedding $k \into \CC$ and use the associated Betti realization as our fiber functor.
Before proceeding, we introduce two pieces of notation. For an extensions of fields $k'/k$, we denote the base-change of motives by
\[
(\cdot)|_{k'}\colon \mc{M}_{k, E} \to \mc{M}_{k', E}.
\]
This is not to be confused with the change of coefficients. Fix an embedding $\iota \colon \Qb \into \Qlb$, so that when $E$ is a subfield of $\Qb$ we can speak of the $\ell$-adic realization
\[
H_{\iota} \colon \mc{M}_{k, E} \to \Rep_{\Qlb}(\gal{k})
\]
associated to $\iota$.

Now we turn to the case of number fields, i.e. $k=K$. The Tate conjecture alone does not suffice to link Galois representations with motives: it yields full faithfulness of the $\ell$-adic realization (as in Lemma \ref{motivesrecall}), but does not characterize the essential image. This is done via the combination of the Fontaine-Mazur and Grothendieck-Serre semi-simplicity conjectures, which we now recall. Recall that a semi-simple representation $r_{\ell} \colon G_K \to \mr{GL}_N(\Ql)$ is said to be \textit{geometric} (in the sense of Fontaine-Mazur: \cite{FontaineMazur}) if it is unramified outside a finite set of places of $K$, and if for all $v \vert \ell$ of $K$, the restriction $r_{\ell}|_{G_{K_v}}$ is de Rham (equivalently, potentially semi-stable, as in the original formulation). 
See \cite{Fontaine,ConradH} for the definition and basic properties of deRham representations.
Fontaine and Mazur have conjectured that any irreducible geometric $r_{\ell}$ is isomorphic to a sub-quotient of $H^i(X_{\overline{K}}, \Ql)(j)$ for some smooth projective variety $X/K$ and some integers $i$ and $j$; that the converse assertion holds is a consequence of the base-change theorems of \'{e}tale cohomology (\cite{deligne4.5}) and Faltings' $p$-adic de Rham comparison isomorphism (\cite{faltingsdR}). Grothendieck and Serre have moreover conjectured that for any smooth projective $X/K$, and any integer $i$, $H^i(X_{\overline{K}}, \Ql)$ is a semi-simple representation of $G_K$. Putting all of these conjectures together, we can characterize the essential image of $H_{\iota}$:
\begin{lemma}\label{FM}
Assume the Tate, Fontaine-Mazur, and Grothendieck-Serre conjectures. Let $r_{\ell} \colon \gal{K} \to \mr{GL}_N(\Ql)$ be an irreducible geometric Galois representation. Then there exists an object $M$ of $\mc{M}_{K, \Qb}$ such that
\[
r_{\ell} \otimes_{\Ql} \Qlb \cong H_{\iota}(M).
\]
More generally, the essential image of $H_{\iota}$ consists of all semi-simple geometric representations (with coefficients in $\Qlb$) of $G_K$.
\end{lemma}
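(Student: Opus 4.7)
The plan is to combine Fontaine--Mazur, Grothendieck--Serre, and Tate to realize $r_\ell$ as the image of a motivic idempotent, and then to descend that idempotent from $\Ql$- to $\Qb$-coefficients using the structure theory of semi-simple algebras.

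First I will apply the Fontaine--Mazur conjecture: since $r_\ell$ is irreducible and geometric, it is a subquotient of $H^i(X_{\ov K}, \Ql)(j)$ for some smooth projective $X/K$ and integers $i,j$. The Grothendieck--Serre semi-simplicity conjecture then upgrades ``subquotient'' to ``direct summand'', producing an idempotent $e_\ell \in \End_{\gal{K}}\bigl(H^i(X_{\ov K}, \Ql)(j)\bigr)$ whose image is isomorphic to $r_\ell$. By Lemma \ref{motivesrecall}(4), this endomorphism algebra equals $E \otimes_\Q \Ql$, where $E := \End_{\mc{M}_K}(h^i(X)(j))$ is a finite-dimensional semi-simple $\Q$-algebra by Lemma \ref{motivesrecall}(3). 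So the idempotent $e_\ell$ lives naturally in $E \otimes_\Q \Ql$.

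Next comes the heart of the argument: descending $e_\ell$ to an idempotent over $\Qb$. Writing $E \otimes_\Q \Qb \cong \prod_s M_{n_s}(\Qb)$, extension of scalars via $\iota$ gives $E \otimes_\Q \Qlb \cong \prod_s M_{n_s}(\Qlb)$. In any split semi-simple algebra over a field, the conjugacy classes of idempotents are classified by the numerical dimension vector $(r_s) \in \prod_s \{0,\dots,n_s\}$, an invariant unchanged by further scalar extension. I can therefore choose $e \in E \otimes_\Q \Qb$ whose base change to $E \otimes_\Q \Qlb$ is conjugate to $e_\ell \otimes 1$. Setting $M := \mr{Im}(e)$ in the pseudo-abelian category $\mc{M}_{K, \Qb}$, the $\ell$-adic realization $H_\iota(M)$ (which is the image of the conjugate idempotent $e \otimes 1$ acting on $H^i(X_{\ov K}, \Qlb)(j)$) becomes isomorphic to $r_\ell \otimes_{\Ql} \Qlb$.

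The general essential-image statement will then follow by decomposing a semi-simple geometric representation into irreducibles, applying the above to each, and summing the resulting motives (each summand of a geometric representation is still unramified outside a finite set and de Rham locally at $\ell$). The reverse inclusion is routine: any $H_\iota(M)$ is semi-simple by semi-simplicity of $\mc{M}_{K,\Qb}$ together with full faithfulness, and geometric because the realizations of $h^i(X)(j)$ are so, via smooth--proper base change and Faltings' $p$-adic comparison isomorphism. The step I expect to be the main obstacle is the idempotent descent: the semi-simplicity of $E$ afforded by Jannsen is essential, and it is precisely the Brauer-theoretic obstruction to descending idempotents over smaller coefficient fields that forces the lemma's conclusion to be phrased only after base change to $\Qlb$, rather than directly for $r_\ell$ over $\Ql$.
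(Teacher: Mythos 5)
Your proof of the main (first) assertion is correct and is essentially the same strategy as the paper's: Fontaine--Mazur plus Grothendieck--Serre to get a direct summand of some $H^i(X_{\overline K}, \Ql)(j)$, the Tate conjecture plus Jannsen semi-simplicity to interpret the resulting projector as living in a semi-simple $\Q$-algebra tensored up, and then a descent of the idempotent to $\Qb$-coefficients. Where the paper gets the projector directly in $\End_{\mc M_K}(H^i(X)(j)) \otimes_\Q \Qlb$ and then remarks tersely that "the decomposition of $H^i(X)(j)$ into simple objects of $\mc M_{K, \Qlb}$ is already realized in $\mc M_{K, \Qb}$," you work from the $\Ql$-realization and make the descent completely explicit via the classification of idempotents in a split semi-simple algebra by rank vectors. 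This is the same observation, just written more carefully; your version has the advantage of making clear exactly what invariant controls the descent, at the cost of introducing notation the paper avoids.

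For the second assertion (the essential image), your sketch is too compressed and as written does not quite reduce to the first part. You propose to decompose a semi-simple geometric $\Qlb$-representation into irreducibles and "apply the above to each." But the first part takes as input an irreducible representation with $\Ql$-coefficients; a $\Qlb$-irreducible constituent is not such an object, and you cannot directly cite the first part for it. The paper fixes this with an intermediate step: given an irreducible $r_\iota$ over $\Qlb$, it is defined over some finite extension $L/\Ql$, and one takes the associated $\Ql$-representation $r_\ell$ (restriction of scalars), which is geometric and semi-simple; one then applies the first part to (the irreducible summands of) $r_\ell$ to produce $M \in \mc M_{K, \Qb}$ with $H_\iota(M) \cong r_\ell \otimes \Qlb$, and finally runs the projector-descent argument a second time to cut $r_\iota$ out of $H_\iota(M)$. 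Your sketch omits this restriction-of-scalars step and the second descent, both of which are needed to bridge the gap between $\Qlb$-irreducibles and the $\Ql$-input required by the first part.
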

\proof
The Fontaine-Mazur conjecture asserts that for some smooth projective variety $X/k$, $r_{\ell}$ is a sub-quotient of $H^i(X_{\overline{K}}, \Ql)(j)$ for some integers $i$ and $j$, and the Grothendieck-Serre conjecture implies this sub-quotient is in fact a direct summand. Under the K\"{u}nneth Standard Conjecture (a consequence of our hypotheses by Lemma \ref{motivesrecall}), $\mc{M}_K$ has a canonical (weight) grading, and we denote by $H^i(X)$ the weight $i$ component of the motive of $X$. The Tate conjecture then implies (Lemma \ref{motivesrecall}) that
\begin{equation}\label{Tate}
H_{\iota} \colon \End_{\mc{M}_K}\left( H^i(X)(j) \right) \otimes_{\Q} \Qlb \xrightarrow{\sim} \End_{\Qlb[\gal{K}]}\left( H^i(X_{\overline{K}}, \Qlb)(j) \right)
\end{equation}
is an isomorphism.

Now, there is a projector (of $\Qlb[\gal{K}]$-modules) $H^i(X_{\overline{K}}, \Qlb)(j) \onto r_{\ell}$, which combined with Equation (\ref{Tate}) yields a projector in $\End_{\mc{M}_K}(H^i(X)(j)) \otimes_{\Q} \Qlb$ whose image has $\ell$-adic realization $r_{\ell}$. But $\End_{\mc{M}_K}(H^i(X)(j))$ is a semi-simple algebra over $\Q$ (Lemma \ref{motivesrecall}), which certainly splits over $\Qb$, so the decomposition of $H^i(X)(j)$ into simple objects of $\mc{M}_{K, \Qlb}$ is already realized in $\mc{M}_{K, \Qb}$.\footnote{In fact, it is realized over the maximal CM subfield of $\Qb$: see e.g. \cite[Lemma 4.1.22]{stp:variationsarxiv}.}

For the final claim about the essential image (which we do not use in what follows), it suffices to show an irreducible $r_{\iota} \colon G_K \to \mr{GL}_N(\Qlb)$ lies in the essential image. Such an $r_{\iota}$ is defined over a finite extension of $\Ql$ and can thus be regarded as a higher-dimensional geometric representation $r_{\ell}$ with $\Ql$-coefficients, necessarily semi-simple. By the first part of the lemma, $r_{\ell} \otimes_{\Ql} \Qlb$ is isomorphic to $H_{\iota}(M)$ for some $M \in \mc{M}_{K, \Qb}$, and by the Tate conjecture there is a projector in $\End(M) \otimes_{\Qb} \Qlb$ inducing the canonical (adjunction) projector $r_{\ell} \otimes_{\Ql} \Qlb \onto r_{\iota}$. Arguing as before (a simple object of $\mc{M}_{K, \Qlb}$ arises by scalar-extension from one of $\mc{M}_{K, \Qb}$), we see that $r_{\iota}$ is in the essential image of $H_{\iota}$.
\endproof
Returning to our particular setting, fix any $\ell_0 \not \in S$ as in our first condition on the compatible system $\{\rho_{\ell}\}_{\ell \not \in S}$, and also fix an embedding $\iota_0 \colon \Qb \into \Qb_{\ell_0}$, so that Lemma \ref{FM} provides us with a number field (the linear combinations of correspondences needed to cut out a given object of $\mc{M}_{K, \Qb}$ have coefficients in a finite extension of $\Q$) $E \subset \Qb$ (which we may assume Galois over $\Q$) and a motivic Galois representation $\rho \colon \mc{G}_{K, E} \to \mr{GL}_{N, E}$ such that $H_{\iota_0}(\rho) \cong \rho_{\ell_0} \otimes \overline{\Q}_{\ell_0}$. Let us denote by $\lambda_0$ the place of $E$ induced by $E \subset \Qb \xrightarrow{\iota_0} \Qlb$. Then for all finite places $\lambda$ of $E$ (say $\lambda \vert \ell$), and for almost all places $v$ of $K$, compatibility gives us the following equality of rational numbers (note that $\rho_{\lambda}$ denotes the $\lambda$-adic realization of the motivic Galois representation $\rho$, while $\rho_{\ell}$ denotes the original $\ell$-adic representation in our compatible system):
\[
\tr(\rho_{\lambda}(fr_v))= \tr(\rho_{\lambda_0}(fr_v))= \tr(\rho_{\ell_0}(fr_v))= \tr(\rho_\ell(fr_v).
\]
Here we use the fact that the collection of $\ell$-adic realizations of a motive form a (weakly) compatible system; this follows from the Lefschetz trace formula, in its `formal' version for correspondences (see for instance \cite[3.3.3, 7.1.4]{andre:motifs}). We deduce as usual (Brauer-Nesbitt and Chebotarev) that $\rho_{\ell}^{\mr{ss}} \otimes_{\Ql} E_{\lambda} \cong \rho_{\lambda}$; this holds for all $\lambda$ for which $\rho_{\ell}$ makes sense, i.e. for all $\lambda$ above $\ell \not \in S$.

Recall that for some $\ell_1 \not \in S$, we have assumed $\rho_{\ell_1}$ is absolutely irreducible; hence for any place $\lambda_1$ of $E$ above $\ell_1$, the previous paragraph shows that $\rho_{\lambda_1} \cong \rho_{\ell_1} \otimes E_{\lambda_1}$ is absolutely irreducible. \textit{A fortiori}, $\rho$ is absolutely irreducible, and then by the Tate conjecture all $\rho_{\lambda}$ are absolutely irreducible, so we can upgrade the conclusion of the previous paragraph to an isomorphism of absolutely irreducible representations $\rho_{\ell} \otimes_{\Ql} E_{\lambda} \cong \rho_{\lambda}$, for all $\ell \not \in S$. 

The next question is whether having each (or almost all) $\rho_{\lambda}$ in fact definable over $\Ql$ forces $\rho$ to be definable over $\Q$. Since the $\rho_{\lambda}$ descend to $\Ql$, the Tate conjecture implies that for all $\sigma \in \Gal(E/\Q)$, ${}^{\sigma} \rho \cong \rho$; and since $\End(\rho)$ is $E$, the obstruction to descending $\rho$ to a $\Q$-rational representation of $\mc{G}_{K}$ is an element $\mr{obs}_{\rho}$ of $H^1(\Gal(E/\Q), \mr{PGL}_N(E))$.
\begin{lemma}
With the notation above, $\mr{obs}_{\rho}$ in fact belongs to
\[
\ker \left( H^1(\Gal(E/\Q), \mr{PGL}_N(E)) \to \prod_{\ell \not \in S} H^1(\Gal(E_{\lambda}/\Q_{\ell}), \mr{PGL}_N(E_{\lambda}) \right).
\]
In particular, if $S$ is empty, then $\rho$ can be defined over $\Q$.
\end{lemma}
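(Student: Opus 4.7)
The plan is to make the obstruction class $\mr{obs}_\rho$ explicit as a Galois descent cocycle, observe that it trivializes locally at each $\ell \not\in S$ by virtue of the already established isomorphism $\rho_\lambda \cong \rho_\ell \otimes_{\Ql} E_\lambda$, and then, for the ``in particular'' clause, push the class into the Brauer group and apply Hasse--Brauer--Noether.

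First I would unpack the class. Since $\rho$ is absolutely irreducible with $\End(\rho) = E$, for each $\sigma \in \Gal(E/\Q)$ an isomorphism $c_\sigma \colon {}^\sigma \rho \xrightarrow{\sim} \rho$ exists and is determined up to $E^\times$; the class $\mr{obs}_\rho$ is represented by $\sigma \mapsto \bar{c}_\sigma$ in $H^1(\Gal(E/\Q), \mr{PGL}_N(E))$, and vanishes exactly when the $c_\sigma$'s can be rescaled to a genuine $1$-cocycle, which is precisely what Galois descent of the underlying $E$-vector space requires. To see that the restriction of $\mr{obs}_\rho$ at each $\ell \not\in S$ vanishes, I would fix a $\Q_\ell$-basis of $\rho_\ell$ and identify $\rho_\lambda = \rho_\ell \otimes_{\Ql} E_\lambda$ in this basis; then the natural local isomorphism ${}^\sigma \rho_\lambda \to \rho_\lambda$ for $\sigma \in \Gal(E_\lambda/\Q_\ell)$ is given by the identity matrix, so the restriction of the global cocycle $\bar{c}$ to the decomposition group differs from the trivial cocycle by an $E_\lambda^\times$-coboundary and is trivial in $H^1(\Gal(E_\lambda/\Q_\ell), \mr{PGL}_N(E_\lambda))$.

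For the second assertion, I would pass to Brauer groups via $1 \to E^\times \to \mr{GL}_N(E) \to \mr{PGL}_N(E) \to 1$: by Hilbert's Theorem 90 the coboundary gives an injection
\[
H^1(\Gal(E/\Q), \mr{PGL}_N(E)) \hookrightarrow H^2(\Gal(E/\Q), E^\times) = \Br(E/\Q) \subset \Br(\Q),
\]
functorial under restriction to decomposition subgroups. Let $\beta \in \Br(\Q)$ be the Brauer class attached to $\mr{obs}_\rho$. When $S = \emptyset$, the previous paragraph says that $\beta$ has trivial local invariant at every finite prime, so the reciprocity law $\sum_v \mr{inv}_v(\beta) = 0$ in $\Q/\Z$ forces $\mr{inv}_\infty(\beta) = 0$ as well; by Hasse--Brauer--Noether $\beta$ itself vanishes, hence so does $\mr{obs}_\rho$, and $\rho$ descends to a representation of $\mc{G}_{K,\Q}$.

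The main care point is the local vanishing step: one must verify that restricting the global cocycle $c_\sigma$ to $\Gal(E_\lambda/\Q_\ell)$ really represents the descent obstruction for $\rho_\lambda \to \rho_\ell$, rather than something twisted by a choice of fiber functor or by the identification of $\rho_\lambda$ with the $\ell$-adic realization of $\rho$. This is essentially a diagram-chase tracing how $H_\iota$ interacts with the semi-linear $\Gal(E/\Q)$-action, but it is the only place where the specific structure of the problem enters; once it is in place, Hilbert 90 and the reciprocity law do all the work.
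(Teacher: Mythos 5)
Your proposal is correct and takes essentially the same route as the paper: describe $\mr{obs}_\rho$ via the $E^\times$-indeterminacy of the isomorphisms ${}^\sigma\rho \cong \rho$, show the restriction to each $\Gal(E_\lambda/\Q_\ell)$ vanishes because $\rho_\lambda$ descends to $\Q_\ell$, then push into $\Br(\Q)$ via Hilbert 90 and invoke reciprocity. The one compatibility you flag as the ``main care point'' is precisely what the paper spells out, by recalling that $\rho_\lambda$ is obtained from $\rho \otimes_E E_\lambda$ composed with a continuous section $s_\ell \colon \gal{K} \to \mc{G}_K(\Ql)$, so the induced map on endomorphism algebras identifies the localization of $\mr{obs}_\rho$ with the obstruction class of $\rho_\lambda$.
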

\proof
We know that each of the $\lambda$-adic realizations $\rho_{\lambda}$ (for $\lambda \vert \ell \not \in S$) can be defined over $\Ql$; to prove the lemma, we need to verify that the canonical localizations of $\mr{obs}_{\rho}$ (which arise by extending scalars on the motivic Galois representation) are in fact given by the corresponding obstruction classes for the $\lambda$-adic realizations. Thus, we have to recall how these realizations are constructed from $\rho$ itself. The surjection $\mc{G}_{K} \onto \gal{K}$ admits a continuous section on $\Ql$-points, $s_{\ell} \colon \gal{K} \to \mc{G}_K(\Ql)$; composition with $\rho \otimes_E E_{\lambda}$ yields $\rho_{\lambda}$:
\[
\xymatrix{
G_K \ar@/^1.5pc/[rrr]^{\rho_{\lambda}} \ar[r]_-{s_{\ell}} & \mc{G}_K(\Ql) \ar@{^{(}->}[r] & \mc{G}_{K, E}(E_{\lambda}) \ar[r]_{\rho \otimes_E E_{\lambda}} & \mr{GL}_N(E_{\lambda}).
}
\]
By construction of the respective obstruction classes, the canonical map from endomorphisms of $\rho \otimes_E E_{\lambda}$ to those of $\rho_{\lambda}$ realizes the obstruction class for $\rho_{\lambda}$ as the localization of $\mr{obs}_{\rho}$ at $\Gal(E_{\lambda}/\Ql)$. But we have seen that $\rho_{\lambda}$ can be defined over $\Ql$, so we conclude that
$\mr{obs}_{\rho}$ has trivial restriction to each $\Gal(E_{\lambda}/\Ql)$, as desired.

For the final claim, note that by Hilbert 90 we can regard $\mr{obs}_{\rho}$ as an element of
\[
\ker \left( H^2(\Gal(E/\Q), E^\times) \to \prod_{\ell \not \in S} H^2(\Gal(E_{\lambda}/\Ql), E_{\lambda}^\times) \right).
\]
If $S$ is empty, then the structure of the Brauer group of $\Q$ (which has only one infinite place!) then forces $\mr{obs}_{\rho}$ to be trivial.
\endproof
\proof[Proof of Theorem \ref{main}] From now on we assume $S= \emptyset$, so that our compatible system $\{\rho_{\ell}\}_{\ell}$ arises from a rational representation
\[
\rho \colon \mc{G}_K \to \mr{GL}_{N, \Q}.
\]
Let $M$ be the rank $N$ object of $\mc{M}_K$ corresponding to $\rho$ via the Tannakian equivalence. Recall that we are given a prime $\ell_2$ and a place $v \vert \ell_2$ of $K$ for which we are given that $\rho_{\ell_2}|_{\gal{K_v}}$ is de Rham with Hodge numbers equal to those of an abelian variety of dimension $\frac{N}{2}$. All objects of $\mc{M}_K$ enjoy the de Rham comparison theorem of `$\ell_2$-adic Hodge theory': denoting Fontaine's period ring over $K_v$ by $\mr{B}_{\mr{dR}, K_v}$, and the de Rham realization functor by $H_{\mr{dR}} \colon \mc{M}_K \to \Fil_K$ (the category of filtered $K$-vector spaces), we have the comparison (respecting filtration and $\gal{K_v}$-action)
\[
H_{\mr{dR}}(M) \otimes_K \mr{B}_{\mr{dR}, K_v} \xrightarrow{\sim} H_{\ell_2}(M) \otimes_{\Q_{\ell_2}} \mr{B}_{\mr{dR}, K_v},
\]
hence
\[
H_{\mr{dR}}(M) \otimes_K K_v \cong \mr{D}_{\mr{dR}, K_v}(H_{\ell_2}(M)).
\]
The Hodge filtration on $H_{\mr{dR}}(M)$ therefore satisfies
\begin{equation}\label{gr}
\dim_K \gr^0 \left( H_{\mr{dR}}(M) \right) = \dim_K \gr^{-1} \left( H_{\mr{dR}}(M) \right)= \frac{N}{2}
\end{equation}
and $\gr^i \left( H_{\mr{dR}}(M) \right)=0$ for $i \neq 0, -1$.

Now we turn to the Betti picture. Recall that to define the fiber functor on $\mc{M}_K$ we had to fix an embedding $K \into \CC$; we regard $K$ as a subfield of $\CC$ via this embedding. Then we also have the analytic Betti-de Rham comparison isomorphism
\begin{equation}\label{B-dR}
H_{\mr{dR}}(M) \otimes_K \CC \xrightarrow{\sim} H_{\mr{B}}(M|_{\CC}) \otimes_{\Q} \CC.
\end{equation}
We collect our findings in the following lemma, which relies on an application of the Hodge conjecture:
\begin{lemma}\label{hodge}
There is an abelian variety $A$ over $K$, and an isomorphism of motives $H_1(A) \cong M$.
\end{lemma}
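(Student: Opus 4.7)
My plan is to produce $A$ in three stages: equip the Betti realization $V := H_B(M|_\CC)$ with a polarizable Hodge structure of abelian-variety type; invoke Riemann's theorem and the Hodge conjecture to realize $V$ as $H_1(A_\CC, \Q)$ for a complex abelian variety $A_\CC$ with a motivic isomorphism $H_1(A_\CC) \cong M|_\CC$; and then descend $A_\CC$, along with the isomorphism, from $\CC$ to $K$ using that $M$ is already defined over $K$.

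For the Hodge-theoretic input, the Betti--de Rham comparison (\ref{B-dR}) transfers the filtration on $H_{\mr{dR}}(M) \otimes_K \CC$ to a filtration on $V \otimes_\Q \CC$ whose graded pieces, by (\ref{gr}), are concentrated in degrees $0$ and $-1$, each of dimension $N/2$. Purity of $M$ as a pure motive in $\mc{M}_K$, combined with the constraint $V^{p,q} = \overline{V^{q,p}}$, forces weight $-1$ and a Hodge decomposition of type $\{(0,-1),(-1,0)\}$; the Hodge Standard Conjecture, which follows from our hypotheses by Lemma \ref{motivesrecall}, supplies a polarization on $V$. The classical Riemann equivalence between such polarizable rational Hodge structures and the isogeny category of complex abelian varieties yields $A_\CC$ together with an isomorphism of polarizable Hodge structures $H_1(A_\CC, \Q) \xrightarrow{\sim} V$. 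This isomorphism is a Hodge class in an appropriate Tate twist of $H_B(H_1(A_\CC)^\vee \otimes M|_\CC)$, so by the Hodge conjecture, applied to $A_\CC$ times a smooth projective variety representing $M|_\CC$, it lifts to a morphism in $\Hom_{\mc{M}_\CC}(H_1(A_\CC), M|_\CC)$, which is an isomorphism of motives since its Betti realization is one.

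For the descent, I would first spread out: $A_\CC$ is defined over some finitely generated Galois extension $L/K$ inside $\CC$, and writing $\Hom_{\mc{M}_\CC}$ as a direct limit over finitely generated subfields together with full faithfulness (Lemma \ref{motivesrecall}(4)) lets us take the motivic isomorphism already over $L$, giving $H_1(A_L) \cong M|_L$. For each $\sigma \in \Gal(L/K)$, the canonical identification $\sigma^*(M|_L) = M|_L$ (from $M$ being defined over $K$) produces $H_1(\sigma^* A_L) \cong H_1(A_L)$; Faltings' isogeny theorem then gives an $L$-isogeny $\sigma^* A_L \sim A_L$, so transporting the polarization on $V$ through the isomorphism makes the polarized isogeny class of $A_L$ Galois-stable, and it descends by Galois descent on $\A_g$ to a polarized abelian variety $A/K$. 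Finally, absolute irreducibility of $\rho$ (and hence of $M$) reduces descent of the isomorphism itself to a Brauer-type obstruction killed in the same spirit as in the preceding lemma. The main obstacle is precisely this descent: the Hodge conjecture carries the Hodge-to-motive passage essentially for free, but simultaneously descending the abelian variety and the isomorphism in a Galois-equivariant way, matching polarizations and killing the cohomological obstruction via absolute irreducibility, is the delicate bookkeeping on which the argument rests.
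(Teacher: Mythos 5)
Your first half matches the paper exactly: the Betti--de Rham comparison and the computed Hodge numbers give a polarizable Hodge structure of type $\{(0,-1),(-1,0)\}$, Riemann's theorem produces $A_\CC$, and the Hodge conjecture upgrades the Hodge isomorphism to a motivic isomorphism $H_1(A_\CC)\cong M|_\CC$.

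The descent is where you diverge, and where there are real gaps. First, ``spreading out'' only gets $A_\CC$ and the motivic isomorphism defined over a finitely generated extension $L$ of $K$, which in general has positive transcendence degree; you then treat $L/K$ as if it were finite Galois and invoke $\Gal(L/K)$, which is not available at that point. The paper handles this step carefully: it observes that for every $\sigma\in\Aut(\CC/\Qb)$ one has ${}^{\sigma}M|_\CC = M|_\CC$, hence $H_1({}^\sigma A)\cong H_1(A)$ and (by Riemann again) ${}^\sigma A$ isogenous to $A$, and then proves a separate Lemma (\ref{isogenyCount}) --- a countability argument using a fine moduli space $M_\delta$ with theta structure --- to conclude that $A$ has a model over $\Qb$. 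Only after that does one get down to a genuine finite extension $L/K$, using the equality $\Hom_{\mc{M}_{\Qb}}(H_1(A_{\Qb}), M|_{\Qb}) \to \Hom_{\mc{M}_{\CC}}(H_1(A), M|_{\CC})$. Your argument omits the $\CC$-to-$\Qb$ step entirely.

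Second, your final descent from $L$ to $K$ via ``Galois descent on $\A_g$'' plus a ``Brauer-type obstruction killed by absolute irreducibility'' is not substantiated and is not what the paper does. The obstruction to descending a polarized abelian variety together with an action on its motive is not obviously a Brauer class over $\Q$ in the shape you can kill with the one-infinite-place argument from the previous lemma. The paper sidesteps all of this cleanly: it forms $\Res_{L/K}(A_L)$, identifies $H_1(\Res_{L/K}(A_L))$ with the motivic induction $\Ind_L^K H_1(A_L)$, uses Frobenius reciprocity to produce a nonzero map $M\to \Ind_L^K H_1(A_L)$ in $\mc{M}_K$, and then uses simplicity of $M$ plus full faithfulness of $B\mapsto H_1(B)$ to realize $M$ as $H_1$ of an abelian subvariety of $\Res_{L/K}(A_L)$ over $K$. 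This restriction-of-scalars trick removes any need to descend $A_L$ itself or to analyze a cocycle obstruction. You should either fill in your cohomological descent argument in detail --- showing exactly which $H^2$ class arises and why it vanishes --- or adopt the Weil-restriction route, which is both more elementary and immune to the issues you are hand-waving past.
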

\proof
We see from Equations (\ref{gr}) and (\ref{B-dR}) that $H_{\mr{B}}(M|_{\CC})$ is a polarizable rational Hodge structure of type $\{(0, -1), (-1, 0)\}$. It follows from Riemann's theorem that there is an abelian variety $A/\CC$ and an isomorphism of $\Q$-Hodge structures $H_1(A(\CC), \Q) \cong H_{\mr{B}}(M|_{\CC})$. The Hodge conjecture implies that this isomorphism comes from an isomorphism $H_1(A) \xrightarrow{\sim} M|_{\CC}$ in $\mc{M}_{\CC}$.

For any $\sigma \in \Aut(\CC/\Qb)$, we deduce an isomorphism
\[
{}^{\sigma} H_1(A) \xrightarrow{\sim} {}^{\sigma} M|_{\CC} = M|_{\CC} \xleftarrow{\sim} H_1(A),
\]
and again from Riemann's theorem we see that ${}^{\sigma} A$ and $A$ are isogenous.

The following statement will be proven later in this paper.
\begin{lemma}
\label{isogenyCount}
Let $\calK$ be a countable subfield of the field $\CC$ and
$\bar{\calK}$ the algebraic closure of $\calK$ in $\CC$.
Let $\calA$ be a
complex abelian variety of  dimension $g$ such that for each field automorphism $\sigma
\in \Aut(\CC/\calK)$ the complex abelian varieties $\calA$  and its ``conjugate" $^{\sigma}\calA=\calA\times_{\CC,\sigma}\CC$   are
isogenous. Then there exists an abelian variety $\calA_0$ over $\bar{\calK}$  such that
$\calA_0\times_{\bar{\calK}} \CC$ is isomorphic to $\calA$.

\end{lemma}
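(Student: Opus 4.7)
The plan is to reduce the descent problem to a countability argument on the moduli space $\A_g$ of principally polarized abelian varieties, exploiting that complex abelian varieties isogenous to $\calA$ form a countable set up to isomorphism. First, since $\calA$ is projective, I would fix any polarization $\lambda$ on $\calA$, giving a point $P=[(\calA,\lambda)] \in \A_g(\CC)$, where $\A_g$ is viewed as a scheme (its coarse moduli) over $\Q\subset\calK$. The goal is to show that $P\in\A_g(\bar\calK)$ and then to recover $\calA_0$ from $P$.

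To bound the $\Aut(\CC/\calK)$-orbit of $P$: for any $\sigma\in\Aut(\CC/\calK)$, the point ${}^{\sigma}P$ corresponds to $({}^{\sigma}\calA,{}^{\sigma}\lambda)$, whose underlying abelian variety is isogenous to $\calA$ by hypothesis. Hence the orbit is contained in the set $I\subset\A_g(\CC)$ of moduli classes of principally polarized abelian varieties whose underlying variety is isogenous to $\calA$, and I would show $I$ is countable as follows. Every abelian variety isogenous to $\calA$ has the form $\calA/H$ for some finite subgroup $H\subset\calA(\CC)$, and since the torsion of $\calA(\CC)$ is countable there are at most countably many such $H$; for each resulting $B$, the set of polarizations is a subset of the finitely generated abelian group $\mathrm{NS}(B)$, hence countable. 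So $I$, and therefore the orbit of $P$, is countable.

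Now I would convert countability of the orbit into algebraicity of $P$. Embedding an affine open neighborhood of $P$ in $\A_g$ into $\mathbb{A}^n_{\calK}$, the point $P$ acquires coordinates $(x_1,\ldots,x_n)\in\CC^n$. If some $x_i$ were transcendental over $\calK$, then, since $\calK$ is countable, every transcendental element of $\CC$ over $\calK$ would be an $\Aut(\CC/\calK)$-translate of $x_i$, producing uncountably many distinct orbit elements, contradicting the countability just established; hence all coordinates lie in $\bar\calK$, i.e., $P\in\A_g(\bar\calK)$. Since $\bar\calK$ is algebraically closed of characteristic zero, the coarse moduli property produces a principally polarized abelian variety $(\calA_0,\lambda_0)$ over $\bar\calK$ with moduli point $P$; then $\calA_0\times_{\bar\calK}\CC$ and $\calA$ yield the same point of $\A_g(\CC)$, so they are isomorphic as abelian varieties over $\CC$ (forgetting the polarizations).

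The main obstacle is the bookkeeping needed to make the countability of $I$ precise, and in particular to ensure the transition from a coarse moduli point in $\A_g(\bar\calK)$ back to an actual abelian variety over $\bar\calK$ works without further hypotheses. A technically cleaner alternative is to work with $\A_{g,n}$ for some $n\geq 3$, which is a fine moduli space, at the cost of also tracking (finitely many) level-$n$ structures on $\calA$; the countability of the orbit is unaffected, and one then directly obtains $(\calA_0,\lambda_0)$ over $\bar\calK$ from the level-structured moduli point and forgets the level structure at the end.
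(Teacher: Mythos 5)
Your approach matches the paper's almost step for step: bound the $\Aut(\CC/\calK)$-orbit of a moduli point for $\calA$ by the (countable) set of isomorphism classes in the isogeny class, then observe that a transcendental coordinate would force the orbit to be uncountable; and both you and the paper resolve the coarse-vs.-fine moduli issue by passing to a quasi-projective \emph{fine} moduli space with rigidifying structure (you suggest level structures; the paper uses Mumford's $M_\delta$ of polarized abelian schemes with a theta structure of type $\delta$, and also first replaces $\calK$ by $\bar\calK$).

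One inaccuracy to correct: after fixing an arbitrary polarization $\lambda$ on $\calA$ you write $P=[(\calA,\lambda)]\in\A_g(\CC)$, but $\A_g$ here parametrizes \emph{principally} polarized abelian varieties, and a general complex abelian variety need not admit a principal polarization (nor is an arbitrary $\lambda$ principal). Your fallback $\A_{g,n}$ has the same limitation. The fix is exactly what the paper does: work with a fine quasi-projective moduli space of abelian varieties equipped with a polarization of a \emph{fixed type} $\delta$ (plus a rigidifying structure with only finitely many choices per variety). With that substitution, your countability and transcendence arguments go through unchanged and reproduce the paper's proof.
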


It follows from Lemma \ref{isogenyCount} that
 $A$ has a model $A_{\Qb}$ over $\Qb$. The morphism
\[
\Hom_{\mc{M}_{\Qb}}(H_1(A_{\Qb}), M|_{\Qb}) \to \Hom_{\mc{M}_{\CC}}(H_1(A), M|_{\CC})
\]
is an isomorphism, and then by general principles we deduce the existence of some finite extension $L/K$ inside $\Qb$ over which $A$ descends to an abelian variety $A_L$, and where we have an isomorphism $H_1(A_L) \xrightarrow{\sim} M|_{L}$  in $\mc{M}_L$.

Finally, we treat the descent to $K$ itself. We form the restriction of scalars abelian variety $\Res_{L/K}(A_L)$; under the \textit{fully faithful} embedding
\begin{align*}
\mr{AV}^0_K &\subset \mc{M}_K \\
 B &\mapsto H_1(B),
\end{align*}
we can think of $H_1(\Res_{L/K}(A_L))$ as $\Ind_{L}^K (H_1(A_L))$, where the induction is taken in the sense of motivic Galois representations (note that the quotient $\mc{G}_K/\mc{G}_L$ is canonically $\Gal(L/K)$, so this is just the usual induction from a finite-index subgroup). Frobenius reciprocity then implies the existence of a non-zero map $M \to \Ind_L^K (H_1(A_L))$ in $\mc{M}_K$. Since $M$ is a simple motive, this map realizes it as a direct summand in $\mc{M}_K$, and consequently (full-faithfulness) in $\mr{AV}^0_K$ as well. That is, there is an endomorphism of $\Res_{L/K}(A_L)$ whose image is an abelian variety $A$ over $K$ with $H_1(A) \cong M$.
\endproof

{\bf Proof of Lemma \ref{isogenyCount}}. We may assume that $g\ge 1$.
Since $\bar{\calK}$ is also countable, we may replace $\calK$ by $\bar{\calK}$, i.e., assume that $\calK$ is algebraically closed. Since the isogeny class of $\calA$ consists of a countable set of (complex) abelian varieties (up to an isomorphism), we conclude that the set $\Aut(\CC/\calK)(\calA)$ of isomorphism classes  of complex abelian varieties of the form
$\{^\sigma \calA\mid \sigma \in \Aut(\CC/\calK)\}$ is either finite or countable.

 Our plan is as follows. Let us consider a  {\sl fine} moduli space $\A_{g,?}$ over $\Qb$ of $g$-dimensional abelian varieties (schemes)   with certain additional structures (there should be only finitely many choices of these structures for any given abelian variety) such that it is a quasiprojective subvariety  in some projective space ${\mathbf P}^N$. Choose these additional structures for $\calA$ (there should be only finitely many choices) and let $P \in \A_{g,?}(\CC)$ be the corresponding point of our moduli space. We need to prove that
$$P \in \A_{g,?}(\calK).$$
Suppose that it is not true. Then the orbit $\Aut(\CC/\calK)(P)$ of $P$ is {\sl uncountable}.
Indeed, $P$ lies in one of the $(N+1)$ affine charts/spaces ${\mathbf A}^N$ that do cover ${\mathbf P}^N$. This implies that $P$ does {\sl not} belong to ${\mathbf A}^N(\calK)$ and therefore (at least) one of its coordinates is transcendental over $\calK$. But the $\Aut(\CC/\calK)$-orbit of this coordinate coincides with uncountable  $\CC\setminus \calK$ and therefore the $\Aut(\CC/\calK)$-orbit $\Aut(\CC/\calK)(P)$ of $P$ is uncountable in $\A_{g,?}(\CC)$. However, for each $\sigma \in \Aut(\CC/\calK)$
the point $\sigma(P)$ corresponds to $^{\sigma} \calA$ with some additional structures and there are only finitely many choices for these structures. Since we know that the orbit $\Aut(\CC/\calK)(\calA)$ of $\calA$, is, at most, countable, we conclude that the orbit $\Aut(\CC/\calK)(P)$ of $P$ is also, at most, countable, which is not the case.  This gives us a desired contradiction.

We choose as $\A_{g,?}$ the moduli space of (polarized) abelian schemes of relative dimension $g$ with theta structures of type $\delta$ that was introduced and studied by D. Mumford \cite{MumfordEquations}. In order to choose (define) a suitable $\delta$, let us pick
 a totally symmetric ample invertible sheaf $\calL_0$ on $\calA$ \cite[Sect. 2]{MumfordEquations} and consider its $8$th power $\calL:=\calL_0^8$ in $\Pic(\calA)$. Then $\calL$ is a very ample invertible sheaf that defines a polarization $\Lambda(\calL)$ on $\calA$ \cite[Part I, Sect. 1]{MumfordEquations} that is an isogeny from $\calA$ to its dual; the kernel $H(\calL)$ of $\Lambda(\calL)$ is a finite commutative subgroup of $\calA(\CC)$ (that contains all points of order $8$). The order  of $H(\calL)$ is the degree of the polarization. The type $\delta$ is essentially the isomorphism class of the group $H(\calL)$ \cite[Part I, Sect. 1, p. 294]{MumfordEquations}. The resulting moduli space $\A_{g,?}:=M_{\delta}$ \cite[Part II, Sect. 6]{MumfordEquations} enjoys all the properties that we used in the course of the proof.
\endproof
Here is the anabelian application already mentioned in the introduction:
\begin{thm}
\label{coro}
Assume the Hodge, Tate, Fontaine-Mazur, and Grothendieck-Serre conjectures.
Suppose $s \in S_0(K,\A_g)$ gives rise to a system of $\ell$-adic Galois representations one of which is absolutely irreducible. Then there exists up to isomorphism a unique abelian variety $B/K$ with $\sigma_{\A_g/K}(B)=s$.
\end{thm}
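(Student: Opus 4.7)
The plan is to convert the section $s$ into a compatible system of $\ell$-adic representations verifying the hypotheses of Theorem \ref{main}, apply that result to obtain an abelian variety, and then refine lattice and polarization data to match $s$ exactly.

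First, by the diagram classifying $H(K,\A_g)$ at the start of this section, the class $s$ corresponds to a continuous homomorphism $G_K \to GSp_{2g}(\hat{\Z})$ preserving the symplectic form up to the cyclotomic multiplier; projecting to $\ell$-primary components yields, for every prime $\ell$, a continuous representation $\rho_{\ell} \colon G_K \to GSp_{2g}(\Z_{\ell})$. Since $s \in S_0(K,\A_g)$, for each place $v$ of $K$ there is a principally polarized abelian variety $A_v/K_v$ producing the localization $s \circ \alpha_v$, so $\rho_\ell|_{G_{K_v}} \cong V_\ell(A_v)$ (respecting the symplectic structure) for every $\ell$; moreover $A_v$ has good reduction for all $v$ outside some finite set $\Sigma$.

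Next I check the hypotheses of Theorem \ref{main} with $N = 2g$ and $S = \emptyset$. For $v \notin \Sigma \cup \Sigma_\ell$, good reduction of $A_v$ implies that $\rho_\ell$ is unramified at $v$ and that $\rho_\ell(fr_v)$ has characteristic polynomial equal to that of Frobenius on the reduction $\bar{A}_v$; this polynomial lies in $\Z[T]$ and is independent of $\ell$ by the Weil conjectures for abelian varieties over finite fields, yielding weak compatibility. Condition (1): for any $\ell_0$ and any $v \mid \ell_0$, the isomorphism $\rho_{\ell_0}|_{G_{K_v}} \cong V_{\ell_0}(A_v)$ shows $\rho_{\ell_0}$ is de Rham at $v$. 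Condition (2) is our absolute-irreducibility hypothesis. Condition (3): for any $\ell_2$ and $v \mid \ell_2$, the local isomorphism forces the Hodge--Tate weights of $\rho_{\ell_2}|_{G_{K_v}}$ to be $-1, 0$ each with multiplicity $g = N/2$. Thus Theorem \ref{main} produces an abelian variety $A/K$ with $V_\ell(A) \cong \rho_\ell$ as $G_K$-modules for every $\ell$.

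It remains to produce a principally polarized $B/K$ satisfying $\sigma_{\A_g/K}(B) = s$ and to establish uniqueness. The isomorphism $V_\ell(A) \cong \rho_\ell$ transports the symplectic form on $\rho_\ell$ to a compatible system of Galois-equivariant symplectic forms on the $V_\ell(A)$; by Faltings and Tate's theorem on endomorphisms this arises from a $\Q$-polarization on $A$. The section $s$ further specifies, for each $\ell$, a $G_K$-stable self-dual $\Z_\ell$-lattice $L_\ell \subset V_\ell(A)$ commensurable with $T_\ell(A)$, and the abelian variety $B$ isogenous to $A$ characterized by $T_\ell(B) = L_\ell$ for all $\ell$ then inherits a principal polarization whose Weil pairings on $T_\ell(B)$ are those encoded by $s$; by construction $\sigma_{\A_g/K}(B) = s$. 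For uniqueness, suppose $B'$ also satisfies $\sigma_{\A_g/K}(B') = s$. Then $T_\ell(B) \cong T_\ell(B')$ as symplectic $G_K$-modules for every $\ell$, and the absolute irreducibility of $\rho_{\ell_1}$ forces $\End^0(B) = \Q$, so Faltings yields $\Hom(B, B') \otimes \Z_\ell = \Hom_{G_K}(T_\ell(B), T_\ell(B'))$ of rank one; a $\Z$-generator can be chosen to realize, polarization-compatibly, the given isomorphisms on $T_\ell$ for all $\ell$ simultaneously, giving an isomorphism $B \cong B'$ of principally polarized abelian varieties.

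The main obstacle is the third paragraph: translating the abstract lattice-plus-symplectic-form data of $s$ into an actual principal polarization on a specific $B$ inside the isogeny class of $A$ produced by Theorem \ref{main}, and then leveraging the absolute irreducibility assumption cleanly to bootstrap from Faltings' theorem to an isomorphism of principally polarized abelian varieties. The first two paragraphs, by contrast, amount to repackaging the integrality built into $S_0(K,\A_g)$ as the compatibility and local conditions required by Theorem \ref{main}.
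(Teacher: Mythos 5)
Your first two paragraphs are correct and match the paper: unpack $s$ into $\rho_\ell = s_\ell \otimes \Q_\ell$, use the local points $A_v$ supplied by membership in $S_0(K,\A_g)$ together with the Weil conjectures over finite fields to verify weak compatibility and the de Rham / Hodge--Tate conditions, and invoke Theorem \ref{main} to produce $A/K$ with $V_\ell(A) \cong \rho_\ell$ for all $\ell$.

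The third paragraph, which you yourself flag as ``the main obstacle,'' is where a genuine gap remains. You write that $s$ specifies for each $\ell$ a $G_K$-stable lattice $L_\ell \subset V_\ell(A)$ commensurable with $T_\ell(A)$, and then assert the existence of ``the abelian variety $B$ isogenous to $A$ characterized by $T_\ell(B) = L_\ell$ for all $\ell$.'' That step is not automatic: an \emph{a priori} arbitrary family of $G_K$-stable lattices, one for each $\ell$, need not come from a single abelian variety, because one can only modify $A$ by a finite subgroup scheme and hence can only change $T_\ell(A)$ at finitely many primes. The paper closes exactly this gap. First, the absolute irreducibility hypothesis, combined with the motivic picture, forces $\End(A) = \Z$. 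Then \cite[Corollary 5.4.5]{Zar_Inv85} shows $A[\ell]$ is an absolutely simple $G_K$-module for almost all $\ell$, so for almost all $\ell$ every $G_K$-stable lattice in $V_\ell(A)$ is $\ell^m T_\ell(A)$ for some $m$; hence $L_\ell \cong T_\ell(A)$ as $\Z_\ell[G_K]$-modules for almost all $\ell$. With that finiteness in hand one applies \cite[Proposition 3.3]{DelBourbaki} to produce $B$ in the isogeny class. Without these two ingredients your construction of $B$ is unjustified.

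Your uniqueness argument is also looser than the paper's, which proceeds as follows: for each prime $\ell$, a density-and-openness argument in $\Hom(B,C) \otimes \Z_\ell = \Hom_{G_K}(T_\ell(B), T_\ell(C))$ yields $\phi_\ell \in \Hom(B,C)$ inducing an isomorphism on $\Z_\ell$-Tate modules, hence an isogeny of degree prime to $\ell$; since $\End(B) = \Z$ makes $\Hom(B,C)$ free of rank one with generator $\psi$, one has $\phi_\ell \in \Z\cdot\psi$, so $\deg(\psi) \mid \deg(\phi_\ell)$ is prime to $\ell$ for every $\ell$, forcing $\deg(\psi) = 1$. Your sentence that ``a $\Z$-generator can be chosen to realize, polarization-compatibly, the given isomorphisms on $T_\ell$ for all $\ell$ simultaneously'' suggests the generator itself must induce the prescribed local isomorphisms, which is neither needed nor obviously attainable; the point is only that the generator has degree one.

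One place where your write-up is arguably more careful than the paper's: you explicitly address how the symplectic structure encoded in $s$ produces a (rational, then principal) polarization on $B$, whereas the paper's proof of Theorem \ref{coro} treats $\sigma_{\A_g/K}(B) = s$ as a statement purely about $T_\ell(B) \cong s_\ell$ as $\Z_\ell[G_K]$-modules and does not explicitly discuss lifting the symplectic form to a principal polarization. Your instinct to track that structure is sound; it just does not substitute for the missing lattice-finiteness argument.
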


{\bf Proof.}
Let us write $s_{\ell}$ for the $\ell$-adic representation associated to $s$; thus $s_{\ell}$ is a representation of $G_K$ on a free $\Z_{\ell}$-module of rank $2g$. 
The hypotheses of Theorem \ref{main} apply to the compatible system 
associated to $s$ by assumption,
so we obtain an abelian variety $A/K$ (well-defined up to isogeny) whose rational Tate modules $V_{\ell}(A)$ are isomorphic to the given $s_{\ell} \otimes_{\Z_{\ell}} \Q_{\ell}$ (for all $\ell$). Moreover, the hypotheses also imply that the endomorphism ring of $A$ is $\Z$. It remains to see that within the isogeny class of $A$ there is an abelian variety $B$ over $K$ whose integral Tate modules $T_{\ell}(B)$ are isomorphic to the $s_{\ell}$ (as $\Z_{\ell}$-representations), i.e. such that $\sigma_{\A_g/K}(B)=s$. For this, we first observe that by \cite[Proposition 3.3]{DelBourbaki} (which readily generalizes to abelian varieties of any dimension), it suffices to show that for almost all $\ell$, there is an isomorphism $T_{\ell}(A) \cong s_{\ell}$. Since $\End(A)= \Z$, \cite[Corollary 5.4.5]{Zar_Inv85} implies that $A[\ell]$ is absolutely simple for almost all $\ell$, and hence that for almost all $\ell$, all Galois-stable lattices in $V_{\ell}(A)$ are of the form $\ell^m T_{\ell}(A)$ for some integer $m$; we conclude that $T_{\ell}(A)$ is isomorphic to $s_{\ell}$ for almost all $\ell$. Thus there exists an abelian variety $B$ in the isogeny class of $A$ such that $\sigma_{\A_g/K}(B)= s$.  
 In order to prove the uniqueness of such a $B$ up to an isomorphism,  first, notice that $\End(B)= \Z$. Second, let $C$ be an abelian variety over $K$ such that the $\Z_{\ell}[G_K]$-modules $T_{\ell}(B)$ and $T_{\ell}(C)$ are isomorphic for all primes $\ell$. This implies that the $\Z_{\ell}$-ranks of  $T_{\ell}(B)$ and $T_{\ell}(C)$ coincide and therefore
 $$\dim(B)=\dim(C).$$
  By a theorem of Faltings \cite{Faltings},
 $$\Hom(B,C)=\Hom_{G_K}(T_{\ell}(B), T_{\ell}(C)).$$
 Since $\Hom(B,C)$ is dense in $\Hom(B,C)\otimes \Z_{\ell}$ in the $\ell$-adic topology, and the set of isomorphisms $T_{\ell}(B)\cong T_{\ell}(C)$ is open in $\Hom(B,C)\otimes \Z_{\ell}$, there  is a homomorphism  $\phi_{\ell}\in \Hom(B,C)$ that induces an isomorphism of Tate modules $T_{\ell}(B)\cong T_{\ell}(C)$. Clearly, $\ker(\phi_{\ell})$ does {\sl not} contain points of order $\ell$ and therefore is finite.  Since $\dim(B)=\dim(C)$, we obtain that $\phi_{\ell}$ is an isogeny, whose degree is prime to $\ell$. In particular, $B$ and $C$ are isogenous. On the other hand, since $\End(B)=\Z$, the group $\Hom(B,C)$ is a free $\Z$-module of rank $1$. 
 Let us choose $\psi: B \to C$ that is a generator of $\Hom(B,C)$.  Clearly, $\psi$ is an isogeny. Since for {\sl all primes} $\ell$
 $$\phi_{\ell}\in \Hom(B,C)=\Z\cdot \psi,$$
 $\deg(\psi)$ is {\sl not} divisible by $\ell$ and therefore $\deg(\psi)=1$, i.e., $\psi$ is an isomorphism of abelian varieties $B$ and $C$.


\endproof

Results in the same vein as this corollary have been obtained for elliptic curves over $\Q$
in \cite{HelmVoloch} 
 and for elliptic curves over function fields in \cite{Voloch}.

\section{Counterexamples}
\label{counterE}
Now we will construct an example of Galois representation that will provide
us with examples that show that some of the hypotheses of the above results
are indispensable.


Let $k$ be a real quadratic field. Let us choose a prime $p$
that splits in $k$. Now let $D$ be the indefinite quaternion
$k$-algebra that  splits everywhere outside (two)  prime divisors of
$p$ and is ramified at these divisors.   If  $\ell$ is a prime  then we have
$$D\otimes_{\Q}\Q_{\ell}=[D\otimes_k k]\otimes_{\Q}\Q_{\ell} =
D\otimes_k [k\otimes_{\Q}\Q_{\ell}].$$
This implies that if $\ell\ne p$  then
$D\otimes_{\Q}\Q_{\ell}$ is either (isomorphic to) the {\sl simple} matrix algebra (of size 2) over a quadratic extension of $\Q_{\ell}$ or a direct sum of two copies of of the {\sl simple} matrix algebra (of size 2) over $\Q_{\ell}$. (In both cases, $D\otimes_{\Q}\Q_{\ell}$ is isomorphic to the  matrix algebra $\MM_2(k\otimes_{\Q}\Q_{\ell})$ of size 2 over $k\otimes_{\Q}\Q_{\ell}$.)

 In particular, the image of $D\otimes_{\Q}\Q_{\ell}$ under each nonzero $\Q_{\ell}$-algebra homomorphism contains {\sl zero divisors}.


Let $Y$ be an abelian variety over a field $L$. Suppose that all $\bar{L}$-endomorphisms of $Y$ are defined over $L$ and there is a $\Q$-algebra embedding
$$D \hookrightarrow \End^0(Y)$$
that sends $1$ to $1$.  This gives us the embedding
$$D\otimes_{\Q}\Q_{\ell}\subset \End^0(Y)\otimes_{\Q}\Q_{\ell}\subset
\End_{G_L}(V_{\ell}(Y)).$$
Recall that if $\ell \ne p$ then $D\otimes_{\Q}\Q_{\ell}$ is isomorphic to the matrix algebra of size $2$ over  $k\otimes_{\Q}\Q_{\ell}$. This implies that there are two isomorphic $\Q_{\ell}[G_L]$-submodules $W_{1,\ell}(Y)$ and  $W_{2,\ell}(Y)$ in $V_{\ell}(Y)$ such that
$$V_{\ell}(Y)= W_{1,\ell}(Y)\oplus  W_{2,\ell}(Y)\cong
W_{1,\ell}(Y)\oplus W_{1,\ell}(Y)\cong W_{2,\ell}(Y)\oplus W_{2,\ell}(Y)
.$$
If we denote by  $W_{\ell}(Y)$ the $\Q_{\ell}[G_L]$-module $W_{1,\ell}$ then we get an isomorphism of $\Q_{\ell}[G_L]$-modules
$$V_{\ell}(Y)\cong W_{\ell}(Y)\oplus W_{\ell}(Y).$$
This implies that the centralizer
$\End_{G_L}(V_{\ell}(Y))$ coincides with the matrix algebra $\MM_2(\End_{G_L}(W_{\ell}(Y)))$ of size $2$ over the centralizer
$\End_{G_L}(W_{\ell}(Y))$.

If $\ell=p$ then $k\otimes_{\Q}\Q_p=\Q_p\oplus\Q_p$  and $D\otimes_{\Q}\Q_{p}$ splits into a direct sum of two (mutually isomorphic) quaternion algebras over $\Q_p$. This also gives us a splitting of the Galois module $V_p(Y)$ into a direct sum
$$V_p(Y)= W_{1,p}(Y)\oplus  W_{2,p}(Y).$$
of its certain nonzero  $\Q_{p}[G_L]$-submodules $W_{1,p}(Y)$ and $W_{2,p}(Y)$. (Actually,
$$\dim_{\Q_p} W_{1,p}=\dim_{\Q_p} W_{2,p}=\dim(Y),$$
because $V_p(Y)$ is a {\sl free} $k\otimes_{\Q}\Q_p$-module of rank $2\dim(Y)/[k:\Q]=\dim(Y)$ \cite[Th. 2.1.1 on p. 768]{Ribet}.)

\vskip .5cm

{\bf Remark} Let $L$ be a finitely generated field of characteristic $0$.
Suppose that $D=\End^0(Y)$. By  Faltings' results about the Galois action on Tate modules of abelian varieties \cite{Faltings,Faltings2}, the $G_L$-module  $V_{\ell}(Y)$ is semisimple and
$$\End_{G_L}(V_{\ell}(Y))=D\otimes_{\Q}\Q_{\ell}.$$
This implies that if $\ell \ne p$ then (the submodule) $W_{\ell}(Y)$ is also semisimple and
$$\MM_2(\End_{G_L}(W_{\ell}(Y)))\cong \MM_2(k\otimes_{\Q}\Q_{\ell}).$$
It follows that
$$\End_{G_L}(W_{\ell}(Y))\cong k\otimes_{\Q}\Q_{\ell}.$$
On the other hand,
the $G_L$-modules $W_{1,p}(Y)$ and  $W_{2,p}(Y)$ are non-isomorphic.

\vskip .5cm

 According to Shimura (\cite{Shimura}, see also the case of Type II($e_0=2$) with $m=1$ in \cite[Table 8.1 on p. 498]{OortEndo} and \cite[Table on p. 23]{OZ}) there exists a complex abelian fourfold $X$, whose endomorphism algebra $\End^0(X)$ is isomorphic to $D$. Clearly, $X$ is defined over a finitely generated field of characteristic zero.  It follows from Serre's variant of Hilbert's
irreducibility theorem for infinite Galois extensions  combined with results of Faltings that there exists a number field $K$ and an abelian fourfold $A$ over $K$ such that the
endomorphism algebra  of all $\bar{K}$-endomorphisms of $A$ is also isomorphic to $D$ (see \cite[Cor. 1.5 on p. 165]{noot}). Enlarging $K$, we may assume that all points of order $12$ on $A$ are defined over $K$ and therefore, by the theorem of Silverberg,  all $\bar{K}$-endomorphisms of $A$ are defined over $K$. Now Raynaud's criterion (\cite{Gro}, see also \cite{SZ}) implies that $A$ has everywhere semistable reduction. On the other hand,
$$\dim_{\Q}\End^0(A)=\dim_{\Q}D=8>4=\dim(A).$$
By \cite[Lemma 3.9 on p. 484]{OortEndo}, $A$ has everywhere potential good reduction. This implies that $A$ has good reduction everywhere. If $v$ is a nonarchimedean place of $K$ with finite residue field $\kappa(v)$ then we write $A(v)$ for the reduction of $A$ at $v$; clearly, $A(v)$ is an abelian fourfold over $\kappa(v)$. If $\fchar(\kappa(v)) \ne 2$ then all points of order $4$ on $A(v)$ are defined over $\kappa(v)$; if $\fchar(\kappa(v)) \ne 3$ then all points of order $3$ on $A(v)$ are defined over $\kappa(v)$. It follows from the theorem of Silverberg   that all $\overline{\kappa(v)}$-endomorphisms of $A(v)$ are defined over $\kappa(v)$. 
 For each $v$ we get an embedding of $\Q$-algebras
$$D \cong \End^0(A) \hookrightarrow \End^0(A(v)).$$
In particular,  $\End^0(A(v))$ is a {\sl noncommutative} $\Q$-algebra, whose $\Q$-dimension is divisible by $8$.


\begin{thm}
\label{nonsimple}
If $\ell:=\fchar(\kappa(v)) \ne p$ then $A(v)$ is not simple over $\kappa(v)$.
\end{thm}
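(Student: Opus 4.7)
The plan is to argue by contradiction: suppose $A(v)$ is simple, so $D'' := \End^0(A(v))$ is a central division algebra over the number field $F := \Q(\pi_v)$ generated by the Frobenius, with index $e$ satisfying $[F:\Q] \cdot e = 2\dim A(v) = 8$. Reduction of endomorphisms provides a $\Q$-algebra embedding $D \hookrightarrow D''$, and since $D$ is noncommutative we must have $e \ge 2$, leaving the three cases $([F:\Q], e) \in \{(1,8), (2,4), (4,2)\}$ to rule out. Two inputs will do most of the work: by Honda--Tate, $D''$ is unramified at every finite place of $F$ not above $\ell$, hence in particular at every place above $p$ (since $\ell \ne p$); and by construction $D$ has local invariant $\tfrac{1}{2}$ at each of the two primes $\mathfrak{p}_1,\mathfrak{p}_2$ of $k$ above $p$, and is split everywhere else, including at both real places (because $D$ is indefinite).

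First I would dispose of the ``low-degree center'' cases by a direct Brauer-index count. If $F = \Q$, or if $[F:\Q] = 2$ and the image $k' \subset D''$ of $k$ coincides with $F$, then the Frobenius $\pi_v$ lies in a real subfield of $\Q$-degree at most $2$, so $\pi_v = \pm\sqrt{q_v}$; Tate's formula $\mathrm{inv}_w(D'') = \tfrac{v_w(\pi_v)}{v_w(q_v)}[F_w : \Q_\ell]$ then produces only invariants in $\{0, \tfrac{1}{2}\}$ at primes above $\ell$, and all remaining finite and archimedean invariants also lie in $\{0, \tfrac{1}{2}\}$. The global index of $D''$ therefore divides $2$, contradicting $e \in \{4, 8\}$.

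The remaining subcases I would handle by centralizer or double-centralizer arguments. In case $([F:\Q], e) = (4,2)$ with $k \subset F$, a comparison of $F$-dimensions gives $D'' \cong D \otimes_k F$; but $D \otimes_k F$ is split at every real place of $F$ (indefiniteness of $D$), at every finite place of $F$ lying over a prime of $k$ other than $\mathfrak{p}_1, \mathfrak{p}_2$ (since $D$ is unramified there), and, by Honda--Tate applied to $D''$, also at every place of $F$ above $p$; hence $D \otimes_k F \cong M_2(F)$, contradicting that $D''$ is a division algebra. In case $([F:\Q], e) = (4,2)$ with $k \not\subset F$, the composite $k'F$ has $F$-degree $2 = e$ and is therefore a maximal subfield of the quaternion algebra $D''$, so $C_{D''}(k') = C_{D''}(k'F) = k'F$ is commutative and cannot contain $D$. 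Finally, in case $([F:\Q], e) = (2,4)$ with $k' \ne F$, the composite $k'F$ is biquadratic over $\Q$, and the double-centralizer theorem realizes $C := C_{D''}(k'F)$ as a quaternion algebra over $k'F$, which a dimension count identifies with $D \otimes_{k'} k'F$. Division of $C$ would require some $\mathfrak{p}_i$ to split in $k'F/k'$ (otherwise the local degree $[(k'F)_\mathfrak{Q} : \Q_p]$ equals $2$ at every $\mathfrak{Q} \mid p$, so $k'F/k'$ kills all ramification of $D$); but the identity $[C] = [D'' \otimes_F k'F]$ in $\mathrm{Br}(k'F)$, combined with Honda--Tate applied to $D''$, forces every invariant of $C$ at primes above $p$ to vanish, which rules out such a splitting.

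The main obstacle I foresee is the Brauer-class bookkeeping in this last case, where the two descriptions of $C$ -- as the centralizer of $k'F$ inside $D''$ and as the base change $D \otimes_{k'} k'F$ -- impose apparently opposite constraints on the behaviour of $\mathfrak{p}_1, \mathfrak{p}_2$ in the quadratic extension $k'F/k'$; the contradiction emerges only once local invariants are carefully matched on both sides.
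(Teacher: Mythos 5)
Your proof is correct, and it takes a genuinely different route from the paper's. The paper's central device is local: since $\ell\ne p$, the algebra $D\otimes_{\Q}\Q_\ell$ is a matrix algebra over $k\otimes_\Q\Q_\ell$ and hence contains zero divisors; pushing $D\otimes_\Q\Q_\ell$ through the embedding $D\hookrightarrow\End^0(A(v))$ and then into a completion $\End^0(A(v))\otimes_E E_w$ at a place $w\mid\ell$ of $E=\Q(\pi)$ yields a contradiction as soon as that completion is a division algebra. The paper shows this is the case whenever $[E:\Q]=4$, and also when $[E:\Q]=2$ with local invariants $\{1/4,3/4\}$; the remaining $\{1/2,1/2\}$ subcase is dispatched by a separate argument showing $\pi/\bar\pi$ is a root of unity, forcing $\Q(\pi^m)=\Q$ for some $m$ and thereby reducing to the low-dimensional Honda--Tate classification. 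Your proof instead works globally with the subalgebra structure: you put the image $k'$ of $k$ and the center $F$ of $D''=\End^0(A(v))$ side by side, and either identify $D''$ with $D\otimes_{k'}F$ (when $k'\subset F$, $[F:\Q]=4$), or identify the centralizer $C_{D''}(k')=C_{D''}(k'F)$ with a maximal subfield (which cannot contain noncommutative $D$, when $[F:\Q]=4$ and $k'\not\subset F$) or with $D\otimes_{k'}k'F$ via the double-centralizer theorem (when $[F:\Q]=2$ and $k'\ne F$), then compare Brauer classes using $[C_{D''}(k'F)]=[D''\otimes_F k'F]$ and the Honda--Tate vanishing of invariants of $D''$ away from $\ell$. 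The small cases ($F=\Q$, or $F$ real quadratic equal to $k'$) you handle by a direct Brauer-index count rather than simply quoting that totally real $E$ forces $\dim A(v)\le 2$, but the information extracted is the same. The payoff of your approach is that the case split on the local invariants at the two places above $\ell$ (and in particular the delicate $\{1/2,1/2\}$ subcase) disappears, at the cost of invoking the double-centralizer theorem and the Brauer-group identity for centralizers; the paper's approach is more elementary but requires the ad hoc root-of-unity argument to close the last gap. One cosmetic point: where you write ``a dimension count identifies $C$ with $D\otimes_{k'}k'F$'', what is actually used is that the natural multiplication map $D\otimes_{k'}k'F\to C$ is a nonzero homomorphism from a simple algebra to an algebra of equal dimension, hence an isomorphism --- it is not a bare count.
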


\begin{proof}
We write $q_v$ for the cardinality of $\kappa(v)$. Clearly, $q_v$ is a power of $\ell$.

Suppose that  $A(v)$ is  simple over $\kappa(v)$.  Since all endomorphisms of $A(v)$  are defined over $\kappa(v)$, the abelian variety  $A(v)$ is  {\sl absolutely simple}.

Let $\pi$ be a {\sl Weil $q_v$-number} that corresponds to the $\kappa(v)$-isogeny class of $A(v)$ \cite{tateendff,TateBourbaki}. In particular,  $\pi$ is an algebraic integer (complex number), all whose Galois conjugates have (complex) absolute value $\sqrt{q_v}$. In particular, the product
$$\pi \bar{\pi}=q_v,$$
where $\bar{\pi}$ is the complex conjugate of $\pi$.

Let $E=\Q(\pi)$ be the number field generated by $\pi$
and let $\Oc_E$ be the ring of integers in $E$.
Then $E$ contains $\bar{\pi}$
and is isomorphic to the center of $\End^0(A(v))$ \cite{tateendff,TateBourbaki};  one may view $\End^0(A(v))$ as a {\sl central} division algebra over $E$. It is known that
$E$ is either $\Q$, $\Q(\sqrt{\ell})$ or a (purely imaginary) CM field \cite[p. 97]{TateBourbaki}.  It is known (ibid) that in the first two (totally real) cases
 simple $A(v)$ has dimension $1$ or $2$, which is not the case. So, $E$ is a CM field;   Since $\dim(A(v))=4$ and $[E:\Q]$ divides $2\dim(A(v))$, we have
$[E:\Q]=2, 4$ or $8$.
By \cite[p. 96, Th. 1(ii), formula (2)]{TateBourbaki}
\footnote{In \cite{TateBourbaki} our $E$ is denoted by $F$ while our $\End^0(A(v))$ is denoted by $E$.},
$$8=2\cdot 4=2\dim(A(v)))=\sqrt{\dim_E(\End^0(A(v))}\cdot [E:\Q].$$
Since $\End^0(A(v))$ is  {\sl noncommutative}, it follows that $E$ is either an imaginary quadratic field and  $\End^0(A(v))$ is a $16$-dimensional division algebra over $E$ or $E$ is a CM field of degree $4$ and $\End^0(A(v))$ is a $4$-dimensional (i.e.,  quaternion) division  algebra over $E$.  In both cases $\End^0(A(v))$ is unramified at all places of $E$ except  some places of residual characteristic $\ell$ \cite[p. 96, Th. 1(ii)]{TateBourbaki}.  It follows from the Hasse--Brauer-Noether theorem that
 $\End^0(A(v))$ is ramified at, at least, two places of $E$ with residual characteristic $\ell$. This implies that $\Oc_E$ contains, at least, two maximal ideals that lie above $\ell$.

Clearly,
$$\pi, \bar{\pi} \in \Oc_E.$$
Recall that $\pi \bar{\pi}=q_v$ is a power of $\ell$. This implies that
for every prime $r \ne \ell$ both $\pi$  and $\bar{\pi}$ are $r$-adic units in $E$.

First assume that $E$ has degree $4$ and $\End^0(A(v))$ is a quaternion algebra. Then (thanks to the theorem of Hasse--Brauer--Noether)  there exists a place $w$ of $E$ with residual characteristic  $\ell$ and  such that the localization
$\End^0(A(v))\otimes_E E_w$ is a quaternion division algebra over the $w$-adic field $E_w$. On the other hand, there is a nonzero (because it sends $1$ to $1$)  $\Q_{\ell}$-algebra homomorphism
$$D \otimes_{\Q}\Q_{\ell} \to \End^0(A(v))\otimes_{\Q} \Q_{\ell} \twoheadrightarrow \End^0(A(v))\otimes_E E_w.$$ This implies that $\End^0(A(v))\otimes_E E_w$ contains zero divisors, which is not the case and we get a contradiction.

So, now we assume that $E$ is an {\sl imaginary quadratic} field
and
$$\dim_E(\End^0(A(v)))=16=4^2.$$
In particular, the order of the class of $\End^0(A(v))$ in the Brauer group of $E$ divides $4$ and therefore is either $2$ or $4$.

We have already seen that there exist, at least, two maximal ideals
 in $\Oc_E$ that lie above $\ell$. Since $E$ is an imaginary quadratic field,
the ideal  $\ell \Oc_L$  of $ \Oc_L$ splits into a product of two distinct complex-conjugate maximal ideals $w_1$ and $w_2$ and therefore
$$E_{w_1}=\Q_{\ell}, \ E_{w_2}=\Q_{\ell};
\  [E_{w_1}:\Q_{\ell}]=[E_{w_2}:\Q_{\ell}]=1.$$
Let
$$\mathrm{ord}_{w_i}: E^{*} \twoheadrightarrow \Z$$
be the discrete valuation map that corresponds to  $w_i$.
Recall that $q_v$ is a power of $\ell$, i.e., $q_v=\ell^N$ for a certain positive integer $N$. Clearly
$$\mathrm{ord}_{w_i}(\ell)=1, \ \mathrm{ord}_{w_i}(\pi)+ \mathrm{ord}_{w_i}(\bar{\pi})=
 \mathrm{ord}_{w_i}(q_v)=N.$$
By \cite[page 96, Th. 1(ii), formula (1)]{TateBourbaki}, the local invariant of $\End^0(A(v))$ at $w_i$ is
$$\frac{\mathrm{ord}_{w_i}(\pi)}{\mathrm{ord}_{w_i}(q_v)} \cdot  [E_{w_i}:\Q_{\ell}] (\bmod 1)=
\frac{\mathrm{ord}_{w_i}(\pi)}{N} (\bmod 1).$$
In addition,
the sum in $\Q/\Z$ of local invariants  of  $\End^0(A(v))$ at $w_1$ and $w_2$ is zero \cite[Sect. 1, Theorem 1 and Example b)]{TateBourbaki}; we have already seen that its  local invariants at all other places of $E$ do vanish.
Using the Hasse--Brauer-Noether theorem and taking into account that
the order of the class of $\End^0(A(v))$ in the Brauer group of $E$  is either $2$ or $4$, we conclude that the local invariants  of $\End^0(A(v))$ at
 $\{w_1,w_2\}$ are  either $\{1/4 \bmod 1, 3/4 \bmod 1\}$ or
$\{3/4 \bmod 1, 1/4 \bmod 1\}$ (and in both cases the order of  $\End^0(A(v))$ in the Brauer group of $E$ is $4$) or $\{1/2 \bmod 1, 1/2 \bmod 1\}$. In the latter case  it follows from the formula for the $w_i$-adic invariant of $\End^0(A(v))$
 that
$$\mathrm{ord}_{w_i}(\pi)=\frac{N}{2}=\mathrm{ord}_{w_i}(\bar{\pi})$$
and therefore
$\bar{\pi}/\pi$ is a $w_i$-adic unit for both $w_1$ and $w_2$.
Therefore $\bar{\pi}/\pi$ is an $\ell$-adic unit. This implies that $\bar{\pi}/\pi$ is a unit in imaginary quadratic $E$ and therefore is a root of unity.
It follows  that
$$\frac{\pi^2}{q_v}=\frac{\pi^2}{\pi\bar{\pi}}=\frac{\pi}{\bar{\pi}}$$ is   a root of unity. This implies that there is a positive (even) integer $m$ such that
$$\pi^m=q_v^{m/2}\in \Q$$
and therefore $\Q(\pi^m)=\Q$.
 Let $\kappa(v)_m$ be the finite degree $m$ field extension of $\kappa(v)$, which consists of $q_v^m$ elements. Then $\pi^m$ is the Weil $q_v^m$-number that corresponds to the simple $4$-dimensional   abelian variety
$A(v)\times \kappa(v)_m$ over $\kappa(v)_m$. Since $\Q(\pi^m)=\Q$, we conclude (as above) that $A(v)\times \kappa(v)_m$  has dimension $1$ or $2$, which is not the case.

In both remaining cases the order of the algebra $\End^0(A(v))\otimes_E E_{w_1}$ in the Brauer group of the $E_{w_1}\cong \Q_{\ell}$ is $4$. This implies that
$\End^0(A(v))\otimes_E E_{w_1}$ is neither the matrix algebra of size 4 over $E_{w_1}$ nor the matrix algebra of size two over a quaternion algebra over $E_{w_1}$. The only remaining possibility is that $\End^0(A(v))\otimes_ E E_{w_1}$ is a {\sl division algebra} over $E_{w_1}$. However,  there is again  a nonzero (because it sends $1$ to $1$)  $\Q_{\ell}$-algebra homomorphism
$$D \otimes_{\Q}\Q_{\ell} \to \End^0(A(v))\otimes_{\Q} \Q_{\ell} \twoheadrightarrow \End^0(A(v))\otimes_E E_{w_1}.$$ This implies that $\End^0(A(v))\otimes_E E_{w_1}$ contains zero divisors, which is not the case and we get a contradiction.
\end{proof}

\begin{thm}
\label{nonsimple2}
If $\ell:=\fchar(\kappa(v)) \ne p$ then there exists an abelian surface $B(v)$ over  $\kappa(v)$ such that  $A(v)$ is $\kappa(v)$-isogenous to the square $B(v)^2$ of $B(v)$.
\end{thm}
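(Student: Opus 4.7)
The plan is to analyze the Poincar\'e decomposition
\[
A(v)\sim_{\kappa(v)}\prod_i C_i^{n_i}
\]
into pairwise non-isogenous simple factors and show that every multiplicity $n_i$ is \emph{even}; then $B(v):=\prod_i C_i^{n_i/2}$ will be an abelian variety of dimension $\tfrac12\sum_i n_i\dim C_i =2$ with $B(v)^2\sim_{\kappa(v)} A(v)$. Since all $\overline{\kappa(v)}$-endomorphisms of $A(v)$ are already defined over $\kappa(v)$ (by Silverberg, as used just before Theorem \ref{nonsimple}), we have $\End^0(A(v))=\prod_i M_{n_i}(D_i)$ with $D_i:=\End^0(C_i)$; because $D$ is simple and sends $1\mapsto 1$, each projection $D\to M_{n_i}(D_i)$ is injective.

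Given $\dim A(v)=4$ together with Theorem \ref{nonsimple} (ruling out the simple case), the list of possible partitions is very short. A crude dimension count eliminates most configurations in which some $n_i$ is odd: if $A(v)$ admits an elliptic factor $E$ with multiplicity $1$, then $D$ would have to embed in $\End^0(E)$, whose $\Q$-dimension is at most $4 < 8=\dim_\Q D$. This narrows the potentially ``bad'' configurations down to
\begin{itemize}
\item[(I)] $A(v)\sim E^2\times C_1$, with $E$ an elliptic curve and $C_1$ a simple abelian surface, and
\item[(II)] $A(v)\sim C_1\times C_2$, with $C_1,C_2$ non-isogenous simple abelian surfaces;
\end{itemize}
the surviving decompositions $E^4$, $C^2$, and $E_1^2\times E_2^2$ already have all $n_i$ even.

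To rule out (I) and (II) I plan to use the very specific ramification of $D$. By construction, $D$ is unramified outside the two primes of $k$ lying over the split prime $p$, with local invariant $1/2$ at each, and hence $\mathrm{Cor}_{k/\Q}[D]=0$ in $\br\Q$. In (I) the injection $D\hookrightarrow M_2(\End^0(E))$ forces $\dim_\Q\End^0(E)=4$ (any smaller value either makes $M_2(\End^0(E))$ too small for $D$ to embed, or makes $D$ itself a matrix algebra, contradicting that it is a division algebra), so $E$ is supersingular and $\End^0(E)$ is the rational quaternion algebra $B_{\ell,\infty}$ ramified exactly at $\ell$ and $\infty$. The centralizer of (the image of) $k$ in $M_2(B_{\ell,\infty})$ has $\Q$-dimension $8$ and must therefore coincide with the image of $D$; the standard corestriction formula $[A]=\mathrm{Cor}_{k/\Q}[C_A(k)]$ then gives $[B_{\ell,\infty}]=\mathrm{Cor}_{k/\Q}[D]=0$ in $\br\Q$, contradicting the non-triviality of $B_{\ell,\infty}$. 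For (II), either some $\dim_\Q\End^0(C_i)=16$, in which case the identical corestriction argument applies, or $\dim_\Q\End^0(C_i)=8$ for both $i$; in the latter case $D\hookrightarrow\End^0(C_i)$ is an isomorphism of $8$-dimensional $\Q$-algebras, so $E_i=k$, and Tate's explicit description of the local invariants of $\End^0$ for simple abelian varieties with totally real Weil-number center forces $\End^0(C_i)$ to be ramified at every infinite place of $k$, contradicting that our $D$ is indefinite.

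The main obstacle is this last step: one has to carefully assemble the corestriction identity $[A]=\mathrm{Cor}_{k/\Q}[C_A(k)]$ together with the correct local data, namely the explicit invariants of $D$ coming from its construction and the invariants of the $\End^0(C_i)$ coming from Tate's classification. Everything else is dimension counting and Poincar\'e reducibility. Once all $n_i$ are known to be even, $B(v):=\prod_i C_i^{n_i/2}$ has the required property.
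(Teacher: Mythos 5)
Your proposal takes a genuinely different route from the paper, and most of it is sound, but there is one real gap in the way you dispose of case (I), which I explain below.

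\textbf{Comparison with the paper.} The paper decomposes $A(v)$ up to $\kappa(v)$-isogeny into isotypic components $A_i$, shows each $A_i$ has $\dim\ge 2$ because $D$ injects (unitally) into each $\End^0(A_i)$, and then uses the classification of simple abelian surfaces over finite fields by Newton polygon: an absolutely simple abelian surface over a finite field is ordinary or almost ordinary, and in either case its endomorphism algebra is \emph{commutative} (Oort), so a noncommutative $\End^0(A_i)$ with $\dim A_i=2$ forces $A_i\sim E_0^2$ with $E_0$ supersingular. Combined with ``all $\overline{\kappa(v)}$-homs are already defined over $\kappa(v)$'' and the fact that all supersingular elliptic curves are geometrically isogenous, this rules out two isotypic components and forces $A(v)$ to be isotypic. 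Your version instead classifies the possible Poincar\'e decompositions by dimension count and then rules out the ``bad'' shapes using Brauer--group arithmetic and Tate's local-invariant formula. Both are correct strategies. Yours is somewhat longer and requires more class-field-theoretic input; the paper's is more elementary given the cited facts about surfaces over finite fields. One genuine simplification you are missing: in your case (I) the unital projection $D\to\End^0(C_1)$ is injective, so your case (II) analysis applied to $C_1$ alone already eliminates (I); the detour through $M_2(\End^0(E))$ is not needed.

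\textbf{The gap.} The identity you invoke,
\[
[A]=\mathrm{Cor}_{k/\Q}\bigl[C_A(k)\bigr],
\]
is \emph{not} a valid formula. The centralizer theorem gives the \emph{restriction} formula $\mathrm{Res}_{k/\Q}[A]=[C_A(k)]$ (because $A\otimes_\Q k\cong M_d(C_A(k))$ with $d=[k:\Q]$); applying corestriction to both sides yields $\mathrm{Cor}_{k/\Q}[C_A(k)]=[k:\Q]\cdot[A]=2[A]$, not $[A]$. In your situation $A=M_2(B_{\ell,\infty})$, so $2[A]=2[B_{\ell,\infty}]=0$, which is perfectly consistent with your (correct) computation $\mathrm{Cor}_{k/\Q}[D]=0$ and produces \emph{no contradiction}. (To see concretely that your formula fails, take $L$ a maximal subfield of a nonsplit quaternion algebra $A$: then $C_A(L)=L$ and $\mathrm{Cor}_{L/F}[L]=0\neq[A]$.) The argument can be repaired: use the restriction formula and compare local invariants at the \emph{archimedean} places of $k$. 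Since $k$ is real quadratic, $\mathrm{Res}_{k/\Q}[B_{\ell,\infty}]$ has invariant $1/2$ at both real places of $k$, whereas $[D]$ has invariant $0$ there (that $D$ is indefinite is exactly why). So $\mathrm{Res}_{k/\Q}[B_{\ell,\infty}]\neq[D]$, contradicting $D=C_A(k)$. The same repair handles the $\dim_\Q\End^0(C_i)=16$ subcase of (II) --- although in fact that subcase is vacuous: by Tate's formula $2\dim X=\sqrt{\dim_E\End^0(X)}\,[E:\Q]$, a simple abelian variety over a finite field with $E=\Q(\pi)=\Q$ has $\End^0\cong B_{\ell,\infty}$ and $\dim X=1$, so it is a supersingular elliptic curve, not a surface. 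Your $\dim=8$ subcase of (II) is fine as written.
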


\begin{proof}
We  know that $A(v)$ is {\sl not} simple and that all $\overline{\kappa(v)}$-endomorphisms of $A(v)$ are defined over $k(v)$.
Now let us split $A(v)$ up to a $\kappa(v)$-isogeny into a product of its
$\kappa(v)$-isotypic components, using Poincar\'e Complete Reducibility Theorem \cite[Th. 6 on p. 28 and Th. 7 on p. 30]{Lang}.
In other words, there is a $\kappa(v)$-isogeny
$$\mathcal{S}: \prod_{i\in I}A_i \to A(v)$$
where each $A_i$ is a nonzero abelian $\kappa(v)$-subvariety in $A$
such that $\End^0(A_i)$ is a {\sl simple} $\Q$-algebra and $\mathcal{S}$
induces an isomorphism  of $\Q$-algebras
$$\End^0(A(v)) \cong \End^0(\prod_{i\in I}A_i)=\oplus_{i \in I} \End^0(A_i).$$
This gives us  nonzero $\Q$-algebra homomorphisms
$$D \to \End^0(A_i)$$
that must be injective, since $D$ is a {\sl simple} $\Q$-algebra. This implies that each $\End^0(A_i)$ is a noncommutative simple $\Q$-algebra, whose $\Q$-dimension is divisible by $8$. In particular,  all $\dim(A_i) \ge 2$ and therefore $I$ consists of, at most, $2$ elements, since
$$\sum_{i\in I} \dim(A_i)=\dim(A(v))=4.$$

Since all $\overline{\kappa(v)}$-endomorphisms of $A(v)$ are defined over $k(v)$, all $\overline{\kappa(v)}$-endomorphisms of $A_i$ are also defined over $\kappa(v)$; in addition,  if $i$ and $j$ are distinct elements of $I$ then every $\overline{\kappa(v)}$-homomorphism between $A_i$ and $A_j$ is $0$.

If we have $\dim(A_i)=2$ for some $i$ then either $A_i$ is isogenous
to a square of a supersingular elliptic curve or $A_i$ is an
absolutely simple abelian surface.  However, each absolutely simple
abelian surface over a finite field is either {\sl ordinary} (i.e.,
the slopes of its Newton polygon are $0$ and $1$, both of length
$2$) or {\sl almost ordinary}  (i.e., the slopes of its Newton
polygon are $0$ and $1$, both of length $1$, and $1/2$ with length
2): this assertion is well known and  follows easily from
\cite[Remark 4.1 on p. 2088]{ZarhinJPAA2015}. However, in both
(ordinary and almost ordinary) cases the endomorphism algebra of a
simple abelian variety is commutative \cite[Lemma 2.3 on p.
136]{OortCM}. This implies that if $\dim(A_i)=2$ then $A_i$ is
$\kappa(v)$-isogenous to a square of a supersingular elliptic curve.
However, if $I$ consists of two elements say, $i$ and $j$ then it
follows that both $A_i$ and $A_j$ are $2$-dimensional and therefore
both  isogenous to a square of a supersingular elliptic curve.  This
implies that $A_i$ and $A_j$ are isotypic and therefore $A$ itself
is isotypic and we get a contradiction, i.e., none of $A_i$ has
dimension $2$. It is also clear that if $\dim(A_i)=3$ then
$\dim(A_j)=1$, which could not be the case. This implies that $A(v)$
itself is isotypic.   It follows that if
$\ell=\fchar(\kappa(v))\ne p$ then  $A(v)$ is $\kappa(v)$-isogenous
either to a $4$th power of an elliptic curve or to a square of an
abelian surface over $\kappa(v)$ (recall that $A(v)$ is not
simple!). In both cases   there exists an abelian surface $B(v)$
over $\kappa(v)$, whose square $B(v)^2$ is $\kappa(v)$-isogenous to
$A(v)$.   
\end{proof}

Let $B(v)$ be as in Theorem \ref{nonsimple2}.
One may lift  the abelian surface $B(v)$ over $\kappa(v)$ to an abelian surface $B^{v}$ over
$K_v$, whose reduction is $B(v)$ (see \cite[Prop. 11.1 on p.
177]{OortLift}).  Now if one restricts the action of $G_K$ on the
$\Q_r$-Tate module (here $r$ is any prime different from
$\fchar(\kappa(v)$)
$$V_r(A)=T_r(A)\otimes_{\Z_r} \Q_r$$
to the decomposition group $D(v)=G_{K_v}$ then the corresponding
$G_{K_v}$-module $V_r(A)$ is {\sl unramified} (i.e., the inertia group acts trivially) and isomorphic to
$$V_r(B^{v})\oplus V_r(B^{v}).$$

\begin{thm}
If $r \ne p$ and $\fchar(\kappa(v)) \ne r$ then the $G_{K_v}$-modules
$V_r(B^{v})$ and $W_r(A)$ are isomorphic. In particular, the $G_{K_v}$-modules
$$V_r(A)=W_r(A)\oplus W_r(A)$$
and $$V_r(B^{v})\oplus V_r(B^{v})=V_r((B^{v})^2)$$
are isomorphic.
\end{thm}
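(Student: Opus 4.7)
The plan is to exploit that $A$ has good reduction at $v$ and that $r$ is coprime to the residue characteristic in order to reduce the problem to a Krull--Schmidt comparison inside a single semisimple unramified $G_{K_v}$-module, namely $V_r(A)$ itself.

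First I would record the two decompositions of $V_r(A)$ that are essentially in hand as $G_{K_v}$-modules. Restricting the $G_K$-equivariant splitting $V_r(A) = W_r(A) \oplus W_r(A)$, which comes from the matrix structure $D \otimes_{\Q} \Q_r \cong \MM_2(k \otimes_{\Q} \Q_r)$ (and the resulting orthogonal idempotents acting on $V_r(A)$), yields the first. For the second, since $A$ has good reduction at $v$ and $r \neq \fchar(\kappa(v))$, the N\'eron--Ogg--Shafarevich criterion and specialization of Tate modules provide a natural isomorphism $V_r(A) \cong V_r(A(v))$ of $G_{K_v}$-modules, with the inertia group acting trivially. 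Combining this with the $\kappa(v)$-isogeny $A(v) \sim B(v)^2$ from Theorem \ref{nonsimple2}, together with the analogous specialization isomorphism $V_r(B^v) \cong V_r(B(v))$ (which holds because $B^v$ was chosen with reduction $B(v)$), I obtain a $G_{K_v}$-equivariant isomorphism $V_r(A) \cong V_r(B^v) \oplus V_r(B^v)$.

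Next I would verify semisimplicity. The $G_{K_v}$-action on $V_r(A)$ factors through the topologically cyclic group $G_{\kappa(v)}$ and is isomorphic to that on $V_r(A(v))$, which is semisimple by Tate's theorem on the action of Frobenius on the Tate module of an abelian variety over a finite field. Both $W_r(A)$ and $V_r(B^v)$ appear as direct summands of $V_r(A)$, so they too are semisimple (and unramified) $G_{K_v}$-modules.

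Finally, I would invoke Krull--Schmidt. From the two decompositions above I get $W_r(A) \oplus W_r(A) \cong V_r(B^v) \oplus V_r(B^v)$ as semisimple $G_{K_v}$-modules, so for each simple $G_{K_v}$-constituent the multiplicity on either side is twice its multiplicity in the respective summand; these multiplicities must therefore already agree, giving $W_r(A) \cong V_r(B^v)$. The ``in particular'' statement is immediate by summing with itself. The main thing to be careful about is bookkeeping across categories: the splitting arising from $D$ is a priori only a $G_K$-splitting and must be restricted to $G_{K_v}$, and the specialization identifications all rest on the hypothesis $r \neq \fchar(\kappa(v))$, which is built into the statement.
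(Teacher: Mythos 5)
Your proof is correct and follows essentially the same route as the paper: both compare the two $G_{K_v}$-decompositions of $V_r(A)$ (into $W_r(A)\oplus W_r(A)$ via the quaternion idempotents and into $V_r(B^v)\oplus V_r(B^v)$ via good reduction and the isogeny $A(v)\sim B(v)^2$), invoke semisimplicity of the unramified module from the semisimplicity of Frobenius, and conclude by multiplicity comparison. The only cosmetic difference is attribution: the paper credits the Frobenius semisimplicity to A.~Weil rather than Tate, and it states the necessary Krull--Schmidt cancellation implicitly rather than spelling it out.
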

\begin{proof}
We know that the $G_{K_v}$-modules $W_r(A)\oplus W_r(A)$ and
 $$V_r(B^{v})\oplus V_r(B^{v})$$ are both  isomorphic to $V_r(A)$. Since the Frobenius endomorphism of $A(v)$ acts on $V_r(A)$ as a semisimple linear operator (by a theorem of A. Weil),  the
 $G_{K_v}$-module $V_r(A)$ is semisimple. This implies that the
$G_{K_v}$-modules
$V_r(B^{v})$ and $W_r(A)$ are isomorphic.
\end{proof}

For primes $\ell \ne p$, the algebra $D \otimes_{\Q} \Q_{\ell}$ splits and correspondingly,
the representation $V_{\ell}(A)$ splits as $W_{\ell} \oplus W_{\ell}$.
Locally, at a place $v \nmid \ell$,
we have $W_{\ell} \cong  V_{\ell}(B^v)$.
 However, {\sl globally}, the representation $W_{\ell}$ does {\sl not} come from an
abelian variety over $K$.
 Indeed, if the $G_K$-module $W_{\ell}$ is  isomorphic to $V_{\ell}(B)$ for an abelian variety $B$ over $K$ then $\dim(B)=2$ and the theorem of Faltings implies that there is a {\sl nonzero} homomorphism of abelian varieties $B \to A$ over $K$, which is not the case, since the fourfold $A$ is simple. 
On the other hand, if $v \mid \ell$ then t $V_{\ell}(A)$ is a {\sl deRham representation} of $G_{K_v}$ with weights $0$ and $-1$, both of multiplicity $\dim(A)=4$. Since a subrepresentation of a deRham representation is also deRham, we conclude that 
$W_{\ell}$ is  deRham. It is also clear that $W_{\ell}$ has the same Hodge-Tate weights as 
$$V_{\ell}(A)=W_{\ell}\oplus W_{\ell}$$
but the multiplicities should be divided by $2$, i.e.,  the Hodge-Tate weights of $W_{\ell}$ are  $0$ and $-1$, both of multiplicity $2$.


We thus obtain:
\begin{thm}
\label{counterexample}
The system of representations $\{ W_{\ell} \}_{\ell \ne p}$ constructed above does not come globally from
an abelian variety defined over the field $K$ but for all $v \nmid \ell$ the representation $W_{\ell}$ locally 
comes from an abelian variety $B^v/K_v$. In particular,  $\{ W_{\ell} \}_{\ell \ne p}$ is a weakly compatible system of $4$-dimensional $\ell$-adic representations of $G_K$.

If $v \mid \ell$ then $W_{\ell}$ is locally a deRham representation with Hodge-Tate weights $0$ and $-1$, both of multiplicity $2$.
\end{thm}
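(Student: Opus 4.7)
The plan is to bundle together the facts established in the paragraphs immediately preceding the statement. Almost all of the work is already done; what remains is to assemble the pieces and to verify the weakly compatible system axioms.

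First, for the global non-existence assertion, I would argue by contradiction. Suppose $W_\ell \cong V_\ell(B)$ for some abelian variety $B/K$. Since $W_\ell$ has $\Q_\ell$-dimension $4$, we must have $\dim B = 2$, and then
$$V_\ell(A) \cong W_\ell \oplus W_\ell \cong V_\ell(B^2).$$
Because $\End^0(A) \cong D$ is a division algebra, $A$ cannot be isogenous to a non-trivial power of any abelian variety, so $A$ is simple. Faltings' isogeny theorem applied to the above isomorphism of $G_K$-modules then gives a nonzero $K$-homomorphism $B^2 \to A$, and hence a nonzero $B \to A$; this contradicts the simplicity of $A$ together with $\dim B < \dim A$.

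For the local statement at $v \nmid \ell$, this is exactly the previous theorem: $W_\ell \cong V_\ell(B^v)$ as $G_{K_v}$-modules. To deduce weak compatibility, I would verify the two axioms separately. Condition I (unramifiedness outside a finite set) holds because $A$ has good reduction everywhere, so $V_\ell(A)$ is unramified at every $v \nmid \ell$, and hence so is its $D \otimes_{\Q} \Q_\ell$-stable summand $W_\ell$. Condition II follows from the local identification $W_\ell|_{G_{K_v}} \cong V_\ell(B^v)$: the characteristic polynomial of $fr_v$ on $V_\ell(B^v)$ equals the characteristic polynomial of Frobenius on the reduction $B(v)$, which by Weil lies in $\Z[T]$ and is independent of $\ell$ coprime to $\fchar(\kappa(v))$.

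Finally, at $v \mid \ell$: the $\Q_\ell$-Tate module $V_\ell(A)$ is de Rham (since $A$ has good reduction at $v$) with Hodge-Tate weights $0$ and $-1$, each of multiplicity $\dim A = 4$. Because the $D$-action on $A$ is defined over $K$, the decomposition $V_\ell(A) = W_\ell \oplus W_\ell$ is $G_K$-equivariant and in particular $G_{K_v}$-equivariant; so $W_\ell$ is a $G_{K_v}$-stable subrepresentation. De Rham representations are closed under subobjects, and Hodge-Tate weights add with multiplicity under direct sums, so $W_\ell$ is de Rham with Hodge-Tate weights $0$ and $-1$, each of multiplicity $2$. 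The only subtlety, which is not really an obstacle, is to remember that $K$ had already been enlarged so that all $\bar{K}$-endomorphisms of $A$ are defined over $K$; this is precisely what makes the splitting $V_\ell(A) = W_\ell \oplus W_\ell$ globally $G_K$-equivariant and therefore available at every local $G_{K_v}$.
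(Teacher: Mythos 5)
Your argument is correct and follows the same route as the paper: the global non-existence is deduced from Faltings and the simplicity of $A$ (the paper argues directly from the nonzero $G_K$-map $V_\ell(B)\cong W_\ell\hookrightarrow V_\ell(A)$, while you pass through $V_\ell(A)\cong V_\ell(B^2)$, which is the same content), the local statement at $v\nmid\ell$ is a restatement of the preceding theorem, and the de Rham/Hodge--Tate weight computation at $v\mid\ell$ is identical. Your explicit verification of conditions I and II for weak compatibility is slightly more detailed than what the paper writes out, but it is the intended argument.
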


{\bf Remark}
 By a theorem of Faltings \cite{Faltings}, the $G_K$-module $V_{\ell}(A)$ is semisimple and therefore its submodule
$W_{\ell}$ is also semisimple. On the other hand,  we know that
 the centralizer $$\End_{G_K}(W_{\ell})=k\otimes_{\Q}\Q_{\ell} \ne \Q_{\ell};$$ in particular, none of $W_{\ell}$ is absolutely irreducible.  In what follows we construct an example of a weakly compatible system (for all $\ell \ne p$) of absolutely irreducible deRham representations that does not come globally from an abelian variety over a number field.  However, we do not know whether it comes {\sl locally} from abelian varieties.

\vskip .5cm

Let $p$ be a prime and $H$ be a {\sl definite} quaternion algebra over $\Q$ that is unramified exactly at $p$ and $\infty$.   In particular, for each prime $\ell \ne p$ the $\Ql$-algebra
$$H\otimes_{\Q}\Ql\cong \MM_2(\Ql).$$
Let $g\ge 4$ be an even  integer.  According to Shimura (\cite{Shimura}, see also the case of Type III($e_0=1$) with $m=g/2$ in \cite[Table 8.1 on p. 498]{OortEndo} and \cite[Table on p. 23]{OZ}) there exists a complex $g$-dimensional abelian variety $X$, whose endomorphism algebra $\End^0(X)$ is isomorphic to $H$.   The same arguments as above (related to $D$)
 prove that there exists a $g$-dimensional  abelian variety $B$ over a certain number field $K$ such that all endomorphisms of $B$ are defined over $K$ and $\End^0(B)\cong H$. In particular, $B$ is absolutely simple.
By the theorem of Faltings,  if $\ell$ is a prime then the $G_K$-module $V_{\ell}(B)$ is semisimple and
$$\End_{G_K}(V_{\ell}(B))=H\otimes_{\Q}\Ql.$$
In particular, if $\ell \ne p$ then $\End_{G_K}(V_{\ell}(B))\cong \MM_2(\Q_{\ell})$ and therefore there are two isomorphic $\Q_{\ell}[G_K]$-submodules $U_{1,\ell}(B)$ and  $U_{2,\ell}(B)$ in $V_{\ell}(B)$ such that
$$V_{\ell}(B)= U_{1,\ell}(B)\oplus  U_{2,\ell}(B)\cong
U_{1,\ell}(B)\oplus U_{1,\ell}(B)\cong U_{2,\ell}(B)\oplus U_{2,\ell}(B)
.$$
If we denote by  $U_{\ell}$ the $\Q_{\ell}[G_K]$-module $U_{1,\ell}(B)$ then $\dim_{\Ql}(U_{\ell})=g$ and
we get an isomorphism of $\Q_{\ell}[G_K]$-modules
$$V_{\ell}(B)\cong U_{\ell}\oplus U_{\ell}.$$
Clearly, the submodule $U_{\ell}$ is semisimple and
$$\MM_2(\Q_{\ell})=H\otimes_{\Q}\Ql=\End_{G_K}(V_{\ell}(B))=\MM_2(\End_{G_K}(U_{\ell})).$$
This implies that  $\End_{G_K}(U_{\ell})=\Q_{\ell}$, i.e., the $\ell$-adic (sub)representation 
$$G_K \to \Aut_{\Ql}(U_{\ell})\cong \mr{GL}_g(\Ql)$$ 
is {\sl absolutely irreducible}.  Clearly, for each $\sigma \in G_K$  its characteristic polynomial with respect to the action on $V_{\ell}(B)$ is the square of its characteristic
 polynomial with respect to the action on $U_{\ell}$. This implies that if $v$ is an  nonarchimedean place $v$ of $K$ where $B$ has good reduction then for all primes $\ell \ne p$ such that $v \nmid \ell$ the characteristic polynomial of the frobenius element at $v$ with respect to its action on $U_{\ell}$ has rational coefficients and does not depend on $\ell$. In other words, $U_{\ell}$ is a weakly compatible system of (absolutely irreducible) $\ell$-adic representations.   As above, locally for each $v\mid \ell$ the $G_{K_v}$-module $V_{\ell}(B)$ is deRham with Hodge weights $0$ and $-1$ with weights $g$, which implies that $U_{\ell}$ is also deRham with the same Hodge-Tate weights, whose multiplicities are $g/2$.

\begin{thm}
\label{counterexample2}
The weakly compatible system of $g$-dimensional absolutely irreducible representations $\{ U_{\ell}\}_{\ell \ne p}$ constructed above does not come globally from
an  abelian variety defined over the field $K$.

If $v \mid \ell$ then $U_{\ell}$ is locally a deRham representation with Hodge-Tate weights $0$ and $-1$, both of multiplicity $g/2$.
\end{thm}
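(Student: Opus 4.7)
The plan for the non-existence statement is to invoke Faltings' isogeny theorem and exploit the absolute simplicity of $B$; the local de Rham assertion then reduces to a standard fact about Tate modules of abelian varieties combined with the behavior of the de Rham property under sub-representations.

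To obtain the first assertion, I would argue by contradiction. Suppose there exists an abelian variety $C/K$ with $V_\ell(C) \cong U_\ell$ as $\Q_\ell[G_K]$-modules (for some, and hence every, $\ell \ne p$). Comparing $\Q_\ell$-dimensions yields $2\dim(C) = g$, so $\dim(C) = g/2$. Composing the presumed isomorphism with the inclusion $U_\ell \hookrightarrow V_\ell(B)$ produces a nonzero $G_K$-equivariant morphism $V_\ell(C) \to V_\ell(B)$. By Faltings' theorem, $\Hom(C,B) \otimes_{\Z} \Q_\ell$ maps isomorphically onto $\Hom_{G_K}(V_\ell(C), V_\ell(B))$, so there must exist a nonzero homomorphism $\phi \colon C \to B$ over $K$. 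Its image is a nonzero abelian $K$-subvariety of $B$; absolute simplicity of $B$ forces this image to be all of $B$, whence $\dim(C) \ge \dim(B) = g$. This contradicts $\dim(C) = g/2 < g$ (recall $g \ge 4$).

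For the local assertion at a place $v \mid \ell$, I would invoke the standard input that the $\ell$-adic Tate module of any abelian variety over a $p$-adic field $K_v$ is a de Rham $G_{K_v}$-representation with Hodge--Tate weights $0$ and $-1$, each occurring with multiplicity equal to the dimension of the abelian variety. Applied to $B \times_K K_v$, this gives that $V_\ell(B)|_{G_{K_v}}$ is de Rham with weights $0$ and $-1$ each of multiplicity $g$. The splitting $V_\ell(B) \cong U_\ell \oplus U_\ell$ is induced by central idempotents of $\End^0(B) \otimes_\Q \Q_\ell \cong \MM_2(\Q_\ell)$, which commute with the action of $G_K$ and a fortiori with $G_{K_v}$; so the decomposition is $G_{K_v}$-stable after restriction. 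Because sub-representations of de Rham representations are again de Rham, $U_\ell|_{G_{K_v}}$ is de Rham, and matching Hodge--Tate decompositions on the two sides of $V_\ell(B) \cong U_\ell \oplus U_\ell$ forces the multiplicities of $0$ and $-1$ in $U_\ell$ to each equal $g/2$.

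The substantive step is the appeal to Faltings' theorem in the non-existence argument, where the hypothesis $g \ge 4$ is precisely what keeps the inequality $g/2 < g$ strict, and the absolute simplicity of $B$ is what prevents the image of $\phi$ from being a proper abelian subvariety. The remainder is formal, simply transporting known properties of $V_\ell(B)$ to its direct summand $U_\ell$.
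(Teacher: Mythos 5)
Your proposal is correct and follows essentially the same route as the paper: use the dimension count $\dim C = g/2$ together with Faltings' isogeny theorem to produce a nonzero map $C\to B$ contradicting the (absolute) simplicity of the $g$-dimensional $B$, and deduce the local de Rham statement by transporting the known Hodge--Tate structure of $V_\ell(B)$ along the $G_K$-stable splitting $V_\ell(B)\cong U_\ell\oplus U_\ell$. One small wording slip: the idempotents in $\End^0(B)\otimes\Q_\ell\cong\MM_2(\Q_\ell)$ that cut out $U_\ell$ are \emph{not} central (the center of $\MM_2(\Q_\ell)$ is $\Q_\ell$); the relevant point is simply that they lie in an algebra commuting with the $G_K$-action, which is exactly what your argument actually uses.
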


\begin{proof}
 We claim that none of $U_{\ell}$ comes out from an abelian variety over $K$. Indeed, if there is an abelian variety $C$ over $K$ such that the $G_K$-modules $V_{\ell}(C)$ and $U_{\ell}$ are isomorphic then $\dim(C)=g/2$ and the theorem of Faltings implies the existence of a {\sl nonzero} homomorphism $C \to B$, which contradicts the simplicity of $g$-dimensional $B$. 
\end{proof}

\section{Almost isomorphic abelian varieties}
\label{twists}

Throughout this section, $K$ is a field.  $A$ and $B$ are abelian varieties of positive dimension over $K$.  Recall that $\End^0(A)=\End(A)\otimes\Q$.
If $\ell$ is a positive integer then we write $\Z_{(\ell)}$ for the subring in $\Q$ that consists of all the rational numbers, whose denominators are prime to $\ell$. We have
$$\Z \subset \Z_{(\ell)}=\Z_{\ell}\bigcap \Q\subset \Z_{\ell}.$$
(Here the intersection is taken in $\Q_{\ell}$.)
In addition, if $m$ is a positive integer that is prime to $\ell$
then
$$\Z \subset \Z[1/m]\subset \Z_{(\ell)}\subset \Q.$$
The intersection of all $\Z_{(\ell)}$'s (in $\Q$) coincides with $\Z$.

 In this section we discuss the structure of the right
$\End(A)$-module $\Hom(A,B)$ when the $\Z_{\ell}$-Tate modules of
$A$ and $B$ are isomorphic as Galois modules for all  $\ell$ and $K$
is finitely generated over $\Q$. If $\ell \ne \mathrm{char}(K)$ and
$X$ is an abelian variety over $K$ then we write $X[\ell]$ for the
kernel of multiplication by $\ell$ in $X(\bar{K})$. It is well known
that
 $X[\ell]$ is a finite $G_K$-submodule in $X(\bar{K})$ of order $\ell^{2\dim(X)}$ and there is a natural isomomorphism of $G_K$-modules
$X[\ell]\cong T_{\ell}(X)/\ell T_{\ell}(X)$.
\begin{lem}
\label{rankOneQ}
Let $A$ and $B$ be abelian varieties of positive dimension over $K$.

\begin{enumerate}
\item[(a)]
If $A$ and $B$ are isogenous  over $K$ then the right $\End(A)\otimes\Q$-module 
\newline
$\Hom(A,B)\otimes\Q$ is free of rank $1$. In addition, one may choose as a generator of $\Hom(A,B)\otimes\Q$ any isogeny $\phi:A \to B$.
\item[(b)]
The following conditions are equivalent.

\begin{itemize}
\item[(i)]
The right $\End(A)\otimes\Q$-module $\Hom(A,B)\otimes\Q$ is free of rank $1$.
\item[(ii)]
$\dim(A) \le \dim(B)$ and there exists a $\dim(A)$-dimensional abelian $K$-subvariety $B_0\subset B$ such that $A$ and $B_0$ are isogenous over $K$ and
$$\Hom(A,B)=\Hom(A,B_0).$$
In particular, the image of every $K$-homomorphism  of abelian varieties
$A \to B$ lies in $B_0$.
\end{itemize}
\item[(c)]
If the equivalent conditions (i) and (ii) hold and $\dim(B)\le \dim(A)$ then $\dim(A)=\dim(B), B=B_0$, and $A$ and $B$ are isogenous over $K$.

\end{enumerate}
\end{lem}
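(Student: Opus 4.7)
The plan is as follows. For part (a), I would begin with any isogeny $\phi \colon A \to B$ and a dual isogeny $\psi \colon B \to A$ satisfying $\psi \circ \phi = [n]_A$ for some integer $n \geq 1$. Then any $f \in \Hom(A,B)$ equals $\phi \circ \alpha$ in $\Hom(A,B)\otimes\Q$ for $\alpha := \tfrac{1}{n}\psi \circ f \in \End^0(A)$, which shows $\phi$ generates. For freeness, $\phi \circ \alpha = 0$ with $\alpha \in \End^0(A)$ forces, after clearing denominators, $\beta \in \End(A)$ with $\phi \circ \beta = 0$; the abelian subvariety $\beta(A) \subseteq A$ then lies in the finite group $\ker \phi$, so $\beta = 0$.

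For part (b), the implication (ii)$\Rightarrow$(i) is immediate from (a) applied to $A$ and $B_0$, combined with the identification $\Hom(A,B)\otimes\Q = \Hom(A,B_0)\otimes\Q$ coming from the hypothesis.

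The main direction (i)$\Rightarrow$(ii) is the heart of the lemma. I would choose a free generator $\phi$ of $\Hom(A,B)\otimes\Q$ and, after rescaling by a positive integer, assume $\phi \in \Hom(A,B)$; let $B_0 := \phi(A) \subseteq B$, an abelian $K$-subvariety. The key claim is that $\phi \colon A \to B_0$ is an isogeny. Suppose otherwise, so that $C := \ker(\phi)^0$ is a positive-dimensional abelian $K$-subvariety of $A$. By Poincar\'e's reducibility theorem applied over $K$, $C$ admits an isogeny complement in $A$, producing a nonzero idempotent $e \in \End^0(A)$ whose image is isogenous to $C$; then $\phi \circ e = 0$ in $\Hom(A,B)\otimes\Q$. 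But freeness of rank one says the right annihilator of $\phi$ in $\End^0(A)$ is zero, contradicting $e \neq 0$. Hence $\phi$ is an isogeny, so $\dim A = \dim B_0$ and $A$ is isogenous to $B_0$ over $K$. Finally, for any $f \in \Hom(A,B)$, write $nf = \phi \circ \beta$ with $\beta \in \End(A)$ and $n \geq 1$; since $[n]$ is surjective on the abelian subvariety $f(A) \subseteq B$, we get $f(A) = n f(A) = \phi(\beta(A)) \subseteq B_0$, giving $\Hom(A,B) = \Hom(A,B_0)$.

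Part (c) follows by a one-line dimension count: the chain $\dim A = \dim B_0 \leq \dim B \leq \dim A$ collapses, so $\dim B_0 = \dim B$ and $B_0 = B$ as closed abelian subvarieties, whence $A \sim B$. The main obstacle I expect is the idempotent construction in (i)$\Rightarrow$(ii): Poincar\'e reducibility must be invoked over $K$ itself so that $e$ lies in the $K$-algebra $\End^0(A)$, and one must correctly unpack ``free of rank one'' into the vanishing-of-annihilator statement that produces the contradiction.
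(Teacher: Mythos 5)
Your proposal is correct and follows essentially the same approach as the paper: choose an integral generator $\phi$ of the free rank-one module $\Hom(A,B)\otimes\Q$, set $B_0 := \phi(A)$, and use Poincar\'e complete reducibility over $K$ to produce a nonzero element of $\End^0(A)$ annihilated by $\phi$ whenever $\ker\phi$ has positive dimension, contradicting freeness. The only cosmetic difference is that the paper exhibits $u_0\in\End(A)$ with image equal to the identity component of $\ker\phi$, whereas you phrase it via the idempotent projector; these are the same application of Poincar\'e reducibility.
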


\begin{proof}
(a) is obvious.

Suppose (bii) is true. Let us pick an {\sl isogeny} $\phi: A \to B_0$. It follows that $\Hom(A,B_0)\otimes\Q=\phi \End^0(A)$ is a free right $\End^0(A)$-module of rank $1$ generated by $\phi$. Now (bi) follows from the equality
$$\Hom(A,B)\otimes\Q=\Hom(A,B_0)\otimes\Q.$$

Suppose that (bi) is true. We may choose a homomorphism of abelian varieties
$\phi: A \to B$  as a generator (basis) of the free right $\End(A)\otimes\Q$-module $\Hom(A,B)\otimes\Q$. In other words,  for every homomorphism of abelian varieties $\psi:A \to B$ there are $u \in \End(A)$ and a {\sl nonzero} integer $n$ such that
$n\psi=\phi u$.
In addition, for each {\sl nonzero} $u \in \End(A)$ the composition $\phi u$ is a {\sl nonzero} element of $\Hom(A,B)$.
Clearly, $B_0:=\phi(A)\subset B$ is an abelian $K$-subvariety of $B$
with $\dim(B_0) \le  \dim(A)$.
We have
$$n\psi(A) =\phi u(A) \subset \psi(A)\subset B_0.$$
It follows that the identity component of $\psi(A)$ lies in $B_0$.
Since $\psi(A)$ is a (connected) abelian $K$-subvariety of $B$, we have
$\psi(A)\subset B_0$. This proves that
$\Hom(A,B)=\Hom(A,B_0)$.
On the other hand,
 if $\dim(B_0)=\dim(A)$ then
$\phi: A \to B_0$ is an {\sl isogeny} and
we get (bii) under our additional assumption.
If $\dim(B_0)<\dim(A)$ then   $\ker(\phi)$ has positive dimension that is strictly less than $\dim(A)$. By the Poincar\'e complete reducibility theorem there is an endomorphism $u_0 \in \End(A)$ such that the image $u_0(A)$ coincides with the identity component of $\ker(\phi)$; in particular,
$u_0 \ne 0, \ u_0(A) \subset \ker(\phi)$.
 This implies that
$\phi u_0=0 $ in $\Hom(A,B)$
and we get a contradiction, which proves (bii).

(c) follows readily from (bii).
\end{proof}


\begin{lem}
\label{homoEnd}
Suppose that $A,B,C$ are abelian varieties over $K$ of positive dimension that are mutually isogenous over $K$. We view $\Hom(A,B)\otimes\Q$ and $\Hom(A,C)\otimes\Q$ as right $\End^0(A)=\End(A)\otimes\Q$-modules. Then the natural map
$$m_{B,C}:\Hom(B,C)\otimes\Q \to \Hom_{\End^0(A)}(\Hom(A,B)\otimes\Q,\Hom(A,C)\otimes\Q)$$
that associates to $\tau: B \to C$ a homomorphism of right $\End(A)\otimes\Q$-modules
$$m_{B,C}(\tau):\Hom(A,B)\otimes\Q\to\Hom(A,C)\otimes\Q, \psi \mapsto \tau\psi$$
is a group isomorphism.
\end{lem}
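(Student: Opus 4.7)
The plan is to reduce the statement to a standard Morita-type verification, using Lemma \ref{rankOneQ}(a) as the structural input. Since $A$, $B$, $C$ are mutually isogenous, that lemma guarantees that both $\Hom(A,B)\otimes\Q$ and $\Hom(A,C)\otimes\Q$ are free right $\End^0(A)$-modules of rank $1$, generated by any chosen isogenies $\phi_B\colon A\to B$ and $\phi_C\colon A\to C$. Moreover, working inside the isogeny category (equivalently, after $\otimes\Q$), each isogeny has a two-sided inverse: there exists $\phi_B^{-1}\in \Hom(B,A)\otimes\Q$ with $\phi_B^{-1}\phi_B=\mathrm{id}_A$ and $\phi_B\phi_B^{-1}=\mathrm{id}_B$. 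These two facts will drive both injectivity and surjectivity.

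First I would note that $m_{B,C}$ is visibly a group homomorphism, so it suffices to check the two halves separately. For injectivity, suppose $m_{B,C}(\tau)=0$. Then in particular $\tau\phi_B=0$ in $\Hom(A,C)\otimes\Q$; composing on the right with $\phi_B^{-1}$ gives $\tau=\tau\phi_B\phi_B^{-1}=0$ in $\Hom(B,C)\otimes\Q$, which settles injectivity.

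For surjectivity, let $f\in\Hom_{\End^0(A)}(\Hom(A,B)\otimes\Q,\Hom(A,C)\otimes\Q)$. Because $\Hom(A,B)\otimes\Q=\phi_B\cdot\End^0(A)$ is free of rank $1$, $f$ is determined by $f(\phi_B)\in\Hom(A,C)\otimes\Q$. Define
$$\tau:=f(\phi_B)\,\phi_B^{-1}\ \in\ \Hom(B,C)\otimes\Q.$$
For an arbitrary $\psi=\phi_B u$ with $u\in\End^0(A)$, the right $\End^0(A)$-linearity of $f$ gives
$$m_{B,C}(\tau)(\psi)=\tau\phi_B u=f(\phi_B)\phi_B^{-1}\phi_B u=f(\phi_B)u=f(\phi_B u)=f(\psi),$$
so $m_{B,C}(\tau)=f$, as required.

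I do not anticipate any serious obstacle: the whole argument rests on the freeness of rank $1$ provided by Lemma \ref{rankOneQ}(a) and on the invertibility of isogenies after tensoring with $\Q$. If there is a subtle point, it is checking that $\tau$ lands in $\Hom(B,C)\otimes\Q$ (rather than only in some larger $\Q$-vector space of quasi-homomorphisms), but this is automatic since $f(\phi_B)\in\Hom(A,C)\otimes\Q$ and $\phi_B^{-1}\in\Hom(B,A)\otimes\Q$, whose composition lies in $\Hom(B,C)\otimes\Q$.
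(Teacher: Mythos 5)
Your proof is correct, and the underlying mechanism is the same as the paper's: both rest on the fact that isogenies become invertible after $\otimes\Q$, so that $\Hom(A,B)\otimes\Q$ is a free right $\End^0(A)$-module of rank $1$ generated by any isogeny. The paper states this more tersely as ``the statement is clearly invariant by isogeny'' and thereby reduces to $B=A=C$, where the map is the identity; you instead carry out the verification directly with explicit generators $\phi_B$, $\phi_B^{-1}$. Your version is longer but makes the isogeny-invariance argument concrete, including the explicit inverse $\tau = f(\phi_B)\phi_B^{-1}$, which is a fine way to present it.
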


\begin{proof}
Clearly, $m_{B,C}$ is injective. In order to check the surjectiveness, 
notice that the statement is clearly {\sl invariant  by isogeny}, so we can
assume that $B=A$ and $C=A$, in which case it is obvious.
\end{proof}

Now till the end of this section we assume that $K$ is a field of characteristic zero that is finitely generated over $\Q$, and $A$ and $B$ are abelian varieties of positive dimension over $K$.  By a theorem of Faltings \cite{Faltings,Faltings2},
$$\Hom_{G_K}(T_{\ell}(A), T_{\ell}(B))=\Hom(A,B)\otimes \Z_{\ell}. \eqno{(*)}$$

\begin{lem}
\label{ellIso}
Let $\ell$ be a prime.
Then the following conditions are equivalent.
\begin{itemize}
\item[(i)]
There is an isogeny $\phi_{\ell}:A \to B$, whose degree is prime to $\ell$.
\item[(ii)]
The Tate modules $T_{\ell}(A)$ and $T_{\ell}(B)$ are isomorphic as $\Z_{\ell}[G_K]$-Galois modules.
\end{itemize}
If the equivalent conditions (i) and (ii) hold then  the right $\End(A)\otimes \Z_{(\ell)}$-module $\Hom(A,B)\otimes \Z_{(\ell)}$ is free of rank $1$ and the right $\End(A)\otimes \Z_{\ell}$-module $\Hom(A,B)\otimes \Z_{\ell}$ is free of rank $1$
\end{lem}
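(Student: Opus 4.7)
The plan is to prove (i) $\Leftrightarrow$ (ii) first, then deduce the freeness. The implication (i) $\Rightarrow$ (ii) is routine: an isogeny $\phi_\ell:A\to B$ of degree prime to $\ell$ has kernel containing no $\ell$-power torsion, so $T_\ell(\phi_\ell): T_\ell(A)\to T_\ell(B)$ is injective; and since $\dim(A)=\dim(B)$ (equal degrees being prime to $\ell$ forces the Tate modules to have equal $\Z_\ell$-rank), it is an isomorphism.

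For (ii) $\Rightarrow$ (i), I would invoke Faltings' theorem (*): $\Hom_{G_K}(T_\ell(A),T_\ell(B)) = \Hom(A,B)\otimes\Z_\ell$. The hypothesis gives an element $f$ of the left-hand side that is an isomorphism of $\Z_\ell$-modules. The set of isomorphisms is open in $\Hom_{G_K}(T_\ell(A),T_\ell(B))$ in the $\ell$-adic topology (being the preimage of $\mr{GL}$ under reduction mod $\ell$, or via the determinant being a unit). Since $\Hom(A,B)$ is dense in $\Hom(A,B)\otimes\Z_\ell$, I can find $\phi_\ell\in\Hom(A,B)$ whose induced map on $T_\ell$ is an isomorphism. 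Then $\ker(\phi_\ell)$ meets $A[\ell^n]$ trivially for all $n$, so $\ker(\phi_\ell)$ has order prime to $\ell$; in particular $\ker(\phi_\ell)$ is finite (it has no $\ell^\infty$-torsion, and any positive-dimensional abelian subvariety would), and $\phi_\ell$ is surjective because $\phi_\ell(A)$ already contains $B[\ell^n]$ for all $n$, forcing $\dim\phi_\ell(A)=\dim(B)$. Thus $\phi_\ell$ is an isogeny of degree prime to $\ell$.

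For the freeness, given such an isogeny $\phi:=\phi_\ell$ of degree $d$ prime to $\ell$, the containment $\ker(\phi)\subset A[d]$ lets multiplication by $d$ on $A$ factor through $\phi$, producing $\psi:B\to A$ with $\psi\circ\phi=[d]_A$ and (by a short computation using surjectivity of $\phi$) $\phi\circ\psi=[d]_B$. Since $d$ is a unit in $\Z_{(\ell)}$ (and in $\Z_\ell$), any $\alpha\in\Hom(A,B)$ satisfies $\alpha=\phi\circ\bigl(\tfrac{1}{d}\psi\circ\alpha\bigr)$ with $\tfrac{1}{d}\psi\circ\alpha\in\End(A)\otimes\Z_{(\ell)}$, showing $\phi$ is a generator; and $u\mapsto\phi\circ u$ is injective because $\phi$ is an isogeny (any $u$ with $\phi u=0$ satisfies $du=\psi\phi u=0$, hence $u=0$ after inverting $d$). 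The same argument works verbatim over $\Z_\ell$.

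The main obstacle is the (ii) $\Rightarrow$ (i) step: everything hinges on Faltings' theorem (*) together with the density/openness approximation argument to extract an actual homomorphism of abelian varieties from an abstract isomorphism of Tate modules. Once an honest $\phi\in\Hom(A,B)$ inducing an isomorphism on $T_\ell$ is in hand, controlling $\ker(\phi)$ and proving freeness are formal manipulations with the dual isogeny.
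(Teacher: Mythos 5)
Your approach matches the paper's: (i)\,$\Rightarrow$\,(ii) via the dual isogeny, (ii)\,$\Rightarrow$\,(i) via Faltings' theorem (*) plus the density/openness approximation, and then the freeness claim from the isogeny $\phi_\ell$ of prime-to-$\ell$ degree. You in fact supply more detail on the freeness step than the paper, which merely calls it obvious; your computation $\alpha = \phi\bigl(\tfrac{1}{d}\psi\alpha\bigr)$ with $\tfrac{1}{d}\psi\alpha \in \End(A)\otimes\Z_{(\ell)}$, together with the injectivity of $u\mapsto\phi u$, is exactly what is needed.

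One small gap in (i)\,$\Rightarrow$\,(ii): you argue that $T_\ell(\phi_\ell)$ is injective and then conclude it is an isomorphism ``since $\dim A = \dim B$.'' An injective map between free $\Z_\ell$-modules of equal rank need not be surjective (multiplication by $\ell$ is the obvious counterexample), so equality of ranks is not enough. The paper closes this by producing the dual isogeny $\varphi_\ell$ with $\varphi_\ell\phi_\ell = [d]_A$ and $\phi_\ell\varphi_\ell = [d]_B$; since $d\in\Z_\ell^\times$, $T_\ell(\phi_\ell)$ is invertible with inverse $d^{-1}T_\ell(\varphi_\ell)$. You construct exactly this $\psi=\varphi_\ell$ later in your freeness argument, so the fix is to invoke it already here rather than relying on the rank count.
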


\begin{proof}
(i) implies (ii). Indeed,  let $\phi_{\ell}: A \to B$ be an isogeny such that its degree $d:=\deg(\phi_{\ell})$ is prime to $\ell$. Then there exists an isogeny
$\varphi_{\ell}: B \to A$ such that $\phi_{\ell} \varphi_{\ell}$ is multiplication by $d$  in $B$ and $\varphi_{\ell} \phi_{\ell}$ is multiplication by $d$  in $A$. This implies that  $\phi_{\ell}$ induces an $G_K$-equivariant isomorphism of the $\Z_{\ell}$-Tate modules of $A$ and $B$.

Suppose that (ii) holds.  Since the rank of the free $\Z_{\ell}$-module $T_{\ell}(A)$ (resp. $T_{\ell}(B)$) is $2\dim(A)$ (resp. $2\dim(B)$), we conclude that
$2\dim(A)=2\dim(B)$, i.e.
$\dim(A)=\dim(B)$.
 By the theorem of Faltings (*), there is an isomorphism of the $\Z_{\ell}$-Tate  modules of $A$ and $B$ that lies in $\Hom(A,B)\otimes \Z_{\ell}$.  Since $\Hom(A,B)$ is dense in $\Hom(A,B)\otimes \Z_{\ell}$ in the $\ell$-adic topology, and the set of isomorphisms $T_{\ell}(A)\cong T_{\ell}(B)$ is open in $\Hom(A,B)\otimes \Z_{\ell}$, there  is $\phi_{\ell}\in \Hom(A,B)$ that induces an isomorphism $T_{\ell}(A)\cong T_{\ell}(B)$. Clearly, $\ker(\phi_{\ell})$ does not contain points of order $\ell$ and therefore is finite. This implies that $\phi_{\ell}$ is an isogeny, whose degree is prime to $\ell$. This proves (i).

In order to prove the last assertion of Lemma \ref{ellIso}, one has only to observe that
$\phi_{\ell} \in \Hom(A,B)\subset \Hom(A,B)\otimes \Z_{(\ell)}\subset
\Hom(A,B)\otimes \Z_{\ell}$
is a generator of  the (obviously) free right $\Z_{(\ell)}$-module $\Hom(A,B)\otimes \Z_{(\ell)}$ and of the free right $\Z_{\ell}$-module $\Hom(A,B)\otimes \Z_{\ell}$.
\end{proof}

We say that $A$ and $B$ are {\sl almost isomorphic} if for {\sl all primes} $\ell$ the equivalent conditions (i) and (ii) of Lemma \ref{ellIso} hold. Clearly, if $A$ and $B$ are isomorphic over $K$ then they are almost isomorphic. It is also clear that if $A$ and $B$ are almost isomorphic then they are isogenous over $K$. Obviously, the property of being almost isomorphic is an equivalence relation on the set of (nonzero) abelian varieties over $K$.

\begin{cor}
\label{isomTateFree}
Suppose that $A$ and $B$ are almost isomorphic. Then $A$ and $B$ are isomorphic over $K$ if and only if $\Hom(A,B)$ is a free $\End(A)$-modules of rank $1$.
In particular, if $\End(A)$ is a principal ideal domain
 (for example, 
$\End(A)=\Z$) then every abelian variety over $K$, which is almost
isomorphic to $A$, is actually isomorphic to $A$.
\end{cor}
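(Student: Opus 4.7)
The plan is as follows. The forward implication is immediate: fixing an isomorphism $\phi: A \to B$, the map $u \mapsto \phi \circ u$ gives a right $\End(A)$-module isomorphism $\End(A) \xrightarrow{\sim} \Hom(A,B)$.

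For the converse, suppose $\phi$ generates $\Hom(A,B)$ as a free rank-one right $\End(A)$-module. Since $A$ and $B$ are isogenous (being almost isomorphic) and $\phi \ne 0$, I would first check that $\phi$ is itself an isogeny: for any isogeny $\psi: A \to B$ we may write $\psi = \phi \circ u$, so $B = \psi(A) \subseteq \phi(A)$, forcing $\phi$ to be surjective and hence (as $\dim A = \dim B$) an isogeny. I would then argue that $\deg \phi = 1$ prime by prime: for each $\ell$, Lemma \ref{ellIso} produces an isogeny $\phi_\ell : A \to B$ with $\deg \phi_\ell$ prime to $\ell$, and writing $\phi_\ell = \phi \circ u_\ell$ with $u_\ell \in \End(A)$, the endomorphism $u_\ell$ is necessarily an isogeny (otherwise $\phi \circ u_\ell$ would not be). By multiplicativity of degree under composition, $\deg \phi$ divides $\deg \phi_\ell$ and is therefore prime to $\ell$. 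Since this holds for all $\ell$, $\deg \phi = 1$ and $\phi$ is an isomorphism.

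For the ``in particular'' clause, assume $\End(A)$ is a PID and let $B$ be almost isomorphic to $A$. Then $F := \End^0(A) = \End(A) \otimes \Q$ is a field (a finite-dimensional commutative $\Q$-domain). Because $A$ and $B$ are isogenous, Lemma \ref{rankOneQ}(a) shows that $\Hom(A,B) \otimes \Q$ is one-dimensional over $F$. The $\End(A)$-module $\Hom(A,B)$ is finitely generated (being finitely generated even over $\Z$) and torsion-free, since it embeds in the $F$-vector space $\Hom(A,B) \otimes \Q$; over the PID $\End(A)$ it is therefore free, of rank equal to the $F$-dimension of $\Hom(A,B) \otimes_{\End(A)} F = \Hom(A,B) \otimes \Q$, which is $1$. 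The first part then yields $A \cong B$ over $K$.

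The main technical point is the identity $\phi_\ell = \phi \circ u_\ell$ with $u_\ell \in \End(A)$ (not merely in $\End(A) \otimes \Z_{\ell}$), together with the observation that $u_\ell$ must itself be an isogeny; this is exactly where the hypothesis that $\phi$ generates the \emph{integral} module $\Hom(A,B)$, and not just its $\ell$-adic completion, is essential for extracting divisibility information about $\deg \phi$.
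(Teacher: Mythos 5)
Your proof is correct and follows essentially the same route as the paper: the converse direction uses the generator $\phi$ together with the prime-to-$\ell$ isogenies $\phi_\ell$ from Lemma \ref{ellIso} to force $\deg(\phi)$ prime to every $\ell$, and the ``in particular'' clause invokes freeness of finitely generated torsion-free modules over a PID. You merely make explicit two details that the paper leaves implicit — first isolating the step showing $\phi$ is surjective before discussing degrees, and verifying that the rank of the free module $\Hom(A,B)$ is $1$ via Lemma \ref{rankOneQ}(a) and the fact that $\End^0(A)$ is a field — both of which are sound elaborations rather than deviations.
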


\begin{proof}
Suppose $\Hom(A,B)$ is a free $\End(A)$-module, i.e., there is a homomorphism of abelian varieties $\phi: A \to B$ such that $\Hom(A,B)=\phi \End(A)$.  We know that for any prime $\ell$ there is an isogeny $\phi_{\ell}: A \to B$ of degree prime to $\ell$. (In particular, $\dim(A)=\dim(B)$.) Therefore there is $u_{\ell} \in \End(A)$ with $\phi_{\ell}=\phi u_{\ell}$.
In particular, $\phi_{\ell}(A)\subset \phi(A)$ and $\deg(\phi_{\ell})$ is divisible by $\deg(\phi)$.  Since $\phi_{\ell}(A)=B$ and $\deg(\phi_{\ell})$ is prime to $\ell$, we conclude that $\phi(A)=B$ (i.e., $\phi$ is an isogeny) and $\deg(\phi)$ is prime to $\ell$. Since the latter is true for all primes $\ell$, we conclude that $\deg(\phi)=1$, i.e., $\phi$ is an isomorphism.

Conversely, if $A\cong B$ then $\Hom(A,B)$ is obviously a free $\End(A)$-module generated by an isomorphism between $A$ and $B$.

The last assertion of Corollary follows from the well-known fact
that every finitely generated module without torsion over a
principal ideal domain is free.
\end{proof}

The next statement is a generalization of Corollary \ref{isomTateFree}.

\begin{cor}
\label{BisoC}
Suppose that $A,B,C$ are abelian varieties of positive dimension over $K$ that are almost isomorphic to each other.

Then $B$ and $C$ are isomorphic over $K$ if and only if the right $\End(A)$-modules $\Hom(A,B)$ and $\Hom(A,C)$ are isomorphic.
\end{cor}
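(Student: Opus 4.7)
\proof[Proof proposal]
The plan for the forward direction is the obvious one: any $K$-isomorphism $\rho\colon B \xrightarrow{\sim} C$ yields a right $\End(A)$-module isomorphism $\Hom(A,B) \xrightarrow{\sim} \Hom(A,C)$ by $\psi \mapsto \rho\psi$, and this is reversed by the inverse isomorphism. The substance lies in the converse, where one must manufacture an isomorphism $B \cong C$ from an isomorphism $f\colon \Hom(A,B) \xrightarrow{\sim} \Hom(A,C)$ of right $\End(A)$-modules. My idea is to produce $\tau\colon B \to C$ by reading Lemma \ref{homoEnd} in reverse, and then to show it is an isomorphism by exploiting almost isomorphy prime-by-prime.

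Concretely, tensoring $f$ with $\Q$ and applying Lemma \ref{homoEnd} gives a unique $\tau \in \Hom(B,C)\otimes\Q$ with $(f\otimes\Q)(\psi) = \tau\psi$; the same applied to $f^{-1}$ provides an inverse for $\tau$ in the isogeny category. To upgrade $\tau$ to an element of $\Hom(B,C)$, fix a prime $\ell$ and, using Lemma \ref{ellIso}, pick an isogeny $\phi_\ell\colon A \to B$ of degree $d$ coprime to $\ell$ together with $\varphi_\ell\colon B \to A$ satisfying $\phi_\ell\varphi_\ell=[d]_B$. Since $f$ is defined on $\Hom(A,B)$ itself, the identity $\tau\phi_\ell = f(\phi_\ell)$ lies in $\Hom(A,C)$, whence
\[
[d]_C \circ \tau = f(\phi_\ell)\,\varphi_\ell \in \Hom(B,C).
\]
Thus $\tau \in \Hom(B,C)\otimes\Z_{(\ell)}$ for every $\ell$, and intersecting over all primes forces $\tau \in \Hom(B,C)$.

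Finally I would show $\tau$ is an isomorphism by applying Lemma \ref{ellIso} to the almost isomorphic pair $(B,C)$: it suffices to check $\tau$ induces a $G_K$-equivariant isomorphism $T_\ell(B) \cong T_\ell(C)$ for every prime $\ell$. By Lemma \ref{ellIso} the element $\phi_\ell$ is a generator of the free rank-one right $\End(A)\otimes\Z_{(\ell)}$-module $\Hom(A,B)\otimes\Z_{(\ell)}$; similarly pick a generator $\psi_\ell\in\Hom(A,C)$ (an isogeny of degree prime to $\ell$). The isomorphism $f$ sends generators to generators, so $f(\phi_\ell) = \psi_\ell u$ for some unit $u \in (\End(A)\otimes\Z_{(\ell)})^\times$. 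Under the Faltings identification $\End(A)\otimes\Z_\ell = \End_{G_K}(T_\ell(A))$, the unit $u$ acts as an automorphism of $T_\ell(A)$, so $f(\phi_\ell)$ induces an isomorphism $T_\ell(A) \xrightarrow{\sim} T_\ell(C)$; combined with $\tau\phi_\ell = f(\phi_\ell)$ and the fact that $\phi_\ell$ itself induces an isomorphism on $T_\ell$, we deduce $\tau\colon T_\ell(B)\xrightarrow{\sim}T_\ell(C)$, as required.

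The main obstacle is purely bookkeeping. The rational version of Lemma \ref{homoEnd} produces $\tau$ only in the isogeny category; the work is to invoke Lemma \ref{ellIso} twice, once to clear denominators at every prime and once to control Tate-module degrees at every prime, so that an abstract inverse in the isogeny category becomes a genuine $K$-isomorphism of abelian varieties. No new ingredient beyond Lemmas \ref{rankOneQ}, \ref{homoEnd}, and \ref{ellIso} appears to be needed.
\endproof
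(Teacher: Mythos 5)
Your proof is correct and rests on the same core ingredients as the paper's (Lemma \ref{homoEnd} to produce $\tau\in\Hom(B,C)\otimes\Q$ with $\delta(\psi)=\tau\psi$, Lemma \ref{ellIso} to access degree-prime-to-$\ell$ isogenies from almost isomorphy, and Faltings' theorem). The route you take thereafter is a genuine streamlining: the paper scales $\tau_0$ by a positive integer $n$ to land in $\Hom(B,C)$ with $\tau=n\tau_0$ primitive, and then rules out any common prime $\ell$ of $n$ and $\deg\tau$ by a divisibility contradiction (composing with a degree-prime-to-$\ell$ isogeny $\psi_\ell:A\to B$ to show $\tau$ would be divisible by $\ell$). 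You instead show $\tau_0$ is already integral — at each $\ell$, $d\tau_0=f(\phi_\ell)\varphi_\ell\in\Hom(B,C)$ with $d$ prime to $\ell$, so $\tau_0\in\Hom(B,C)\otimes\Z_{(\ell)}$; intersecting over $\ell$ lands you in $\Hom(B,C)$ directly — and then verify the Tate-module criterion prime by prime using that $f$ takes the generator $\phi_\ell$ of $\Hom(A,B)\otimes\Z_{(\ell)}$ to a generator of $\Hom(A,C)\otimes\Z_{(\ell)}$, hence an isogeny inducing an isomorphism $T_\ell(A)\xrightarrow{\sim}T_\ell(C)$. Your approach avoids the auxiliary $n$ entirely. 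One cosmetic caveat: Lemma \ref{ellIso} by itself only yields the existence of \emph{some} degree-prime-to-$\ell$ isogeny from the Tate-module hypothesis; to conclude that $\tau$ \emph{itself} is an isomorphism you still need the (standard, implicit) last step that $T_\ell(\tau)$ being an isomorphism for all $\ell$ forces $\ker\tau$ to have no $\ell$-torsion for any $\ell$, hence to be trivial, and then $\dim B=\dim C$ finishes; this is the same bookkeeping the paper uses elsewhere (e.g.\ in Corollary \ref{isomTateFree}).
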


\begin{proof}
We know that all $A,B,C$ are mutually isogenous over $K$. Let us choose an isogeny $\phi: B \to C$. We are given an isomorphism
$\delta: \Hom(A,B)\cong \Hom(A,C)$
of right $\End(A)$-modules
that obviously extends by $\Q$-linearity to the isomorphism
$\Hom(A,B)\otimes \Q \to \Hom(A,C)\otimes \Q$ of right $\End(A)\otimes\Q$-modules, which we continue to denote by $\delta$.  By Lemma \ref{homoEnd}, there exists $\tau_0 \in \Hom(B,C)\otimes \Q$ such that $\delta=m_{B,C}(\tau_0)$, i.e.,
$$\delta(\psi)=\tau_0\psi \ \forall \psi \in \Hom(A,B)\otimes \Q.$$
There exists a positive integer $n$ such that $\tau=n\tau_0\in \Hom(B,C)$ and $\tau$ is {\sl not} divisible in $\Hom(B,C)$. This implies that
$$n\cdot \Hom(A,C)=n \delta (\Hom(A,B))=n \tau_0\Hom(A,B)=\tau\Hom(A,B).$$
Since $B$ and $C$ are almost isomorphic,
 for each $\ell$ there is an isogeny $\phi_{\ell}: B \to C$ of degree prime to $\ell$. Since $n \phi_{\ell} \in \tau\Hom(A,B)$, we conclude that $\tau$ is an isogeny and  $\deg(\tau)$ is prime to $\ell$ if $\ell$ does {\sl not} divide $n$.  We need to prove that $\tau$ is an isomorphism. Suppose it is not, then there is a prime $\ell$ that divides $\deg(\tau)$ and therefore divides $n$. We need to arrive to a contradiction. Since $A$ and $B$ are almost isomorphic,  there is an isogeny $\psi_{\ell}:A \to B$ of degree prime to $\ell$. We have
$\tau \psi_{\ell}\in n\cdot \Hom(A,C) \subset \ell\cdot \Hom(A,C)$.
This implies that $\tau$ kills {\sl all} points of order $\ell$ on $B$ and therefore is divisible by $\ell$ in $\Hom(B,C)$, which is not the case. This gives us the desired contradiction.
\end{proof}

\begin{rem}
\label{centerAB} Let ${\mathcal Z}(A)$ (resp. ${\mathcal Z}(B)$) be
the  the center of $\End(A)$ (resp. $\End(B)$). Then ${\mathcal
Z}(A)_{\Q}:={\mathcal Z}(A)\otimes\Q$ (resp. ${\mathcal
Z}(B)_{\Q}:={\mathcal Z}(B)\otimes\Q$) is the center of
$\End(A)\otimes\Q$ (resp. $\End(B)\otimes\Q$) and for all primes
$\ell$ the $\Z_{(\ell)}$-subalgebra
$${\mathcal Z}(A)_{(\ell)}:={\mathcal
Z}(A)\otimes\Z_{(\ell)}\subset {\mathcal Z}(A)_{\Q}\subset
\End(A)\otimes\Q$$ (resp. the $\Z_{(\ell)}$-subalgebra
$${\mathcal Z}(B)_{(\ell)}:={\mathcal
Z}(B)\otimes\Z_{(\ell)}\subset {\mathcal Z}(B)_{\Q}\subset
\End(B)\otimes\Q)$$ is the center of $\End(A)\otimes\Z_{(\ell)}$
(resp. of $\End(B)\otimes\Z_{(\ell)}$).  Every $K$-isogeny $\phi: A \to
B$ gives rise to an isomorphism of $\Q$-algebras
$$i_{\phi}:\End(A)\otimes\Q \cong \End(B)\otimes\Q, \ u \mapsto  \phi u
\phi^{-1},$$ such that $i_{\phi}({\mathcal Z}(A)_{\Q})={\mathcal
Z}(B)_{\Q}$ and the restriction
$i_{\mathcal Z}: {\mathcal Z}(A)_{\Q} \cong {\mathcal Z}(B)_{\Q}$
of $i_{\phi}$ to the center(s) does {\sl not} depend on a choice of
$\phi$ \cite{ZarhinLuminy}. If $\phi_{\ell}: A \to B$ is a $K$-isogeny
of degree prime to $\ell$ then
$i_{\phi_{\ell}}(\End(A)\otimes\Z_{(\ell)})=\End(B)\otimes\Z_{(\ell)}$
and therefore
$i_{\mathcal Z}({\mathcal Z}(A)_{(\ell)}) )= {\mathcal Z}(B)_{(\ell)}$.
This implies that if $A$ and $B$ are {\sl almost isomorphic} then
$i_{\mathcal Z}({\mathcal Z}(A))$ coincides with ${\mathcal Z}(B)$ and therefore
$i_{\mathcal Z}$ defines a canonical isomorphism of commutative
rings ${\mathcal Z}(A)\cong {\mathcal Z}(B)$. In particular, if
$\End(A)$ is commutative then $\End(B)$ is also commutative (because
$\End(A)\otimes\Q$ and $\End(B)\otimes\Q$ are isomorphic) and there
is a canonical ring isomorphisms $\End(A)\cong\End(B)$.
\end{rem}

Until the end of this section,  $\Lambda$ is  a ring with $1$ that, viewed as an additive group, is a free $\Z$-module of finite positive rank. In addition, we assume that the finite-dimensional $\Q$-algebra $\Lambda_{\Q}:=\Lambda\otimes\Q$ is {\sl semisimple}. We write
$\Lambda_{\ell}$ (resp. $\Lambda_{(\ell)}$) for the $\Z_{\ell}$-algebra
$\Lambda\otimes \Z_{\ell}$ (resp. for the $\Z_{(\ell)}$-algebra
$\Lambda\otimes \Z_{(\ell)}$).  We have 
$$\Lambda=\Lambda \otimes 1\subset \Lambda_{(\ell)}\subset \Lambda_{\Q}\subset \Lambda\otimes\Q_{\ell},$$
$$\Lambda\subset \Lambda_{(\ell)}\subset
\Lambda_{\ell}\subset \Lambda\otimes\Q_{\ell}.$$
In addition, the intersection of $\Lambda_{\ell}$ and $\Lambda_{\Q}$
(in $\Lambda\otimes\Q_{\ell}$) coincides with $\Lambda_{(\ell)}$.

 Let $M$  be  an {\sl arbitrary} free commutative group of finite positive rank that is provided with a structure of a right $\Lambda$-module. We write $M_{\Q}$ for the right  $\Lambda_{\Q}$-module $M\otimes \Q$,
$M_{\ell}$ for the right  $\Lambda_{\ell}$-module $M\otimes \Z_{\ell}$ and $M_{(\ell)}$ for the right  $\Lambda_{(\ell)}$-module $M\otimes \Z_{(\ell)}$. We have 
$$M=M \otimes 1\subset M_{(\ell)}\subset M_{\Q}\subset M\otimes\Q_{\ell},$$
$$M\subset M_{(\ell)}\subset
M_{\ell}\subset M\otimes\Q_{\ell}.$$
In addition, the intersection of $M_{\ell}$ and $M_{\Q}$
(in $M\otimes\Q_{\ell}$) coincides with $M_{(\ell)}$.

{\bf Definition}. We say that $M$ is a {\sl locally free right $\Lambda$-module of rank $1$} if for all primes $\ell$ the right $\Lambda_{\ell}$-module $M_{\ell}$ is free of rank $1$. (See \cite{Fro}.)

\begin{thm}
\label{freeRankOne}
Let $M$ be a locally free right $\Lambda$-module of rank $1$. Then it enjoys the following properties.
\begin{itemize}

\item[(i)]
$M$ is a projective $\Lambda$-module. More precisely, $M$ is isomorphic to a direct summand 
of a free right $\Lambda$-module of rank $2$.
\item[(ii)]
The right $\Lambda_\Q$-module $M_{\Q}$ is free of rank $1$. 
\item[(iii)]
The right $\Lambda_{(\ell)}$-module $M_{(\ell)}$ is free of rank $1$ for all primes $\ell$. 
\end{itemize}
\end{thm}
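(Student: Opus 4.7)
I would handle the three parts in the order (ii), (iii), (i), since the later statements will reuse the earlier ones.

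For (ii), the key is semisimplicity of $\Lambda_\Q$. Decompose $\Lambda_{\Q} = \prod_{i} B_{i}$ into simple factors, say $B_{i} \cong M_{n_{i}}(D_{i})$ for a division algebra $D_{i}$; then $B_{i}$ as a right module over itself is isomorphic to $S_{i}^{n_{i}}$, where $S_{i}$ is the unique simple right $B_{i}$-module. Correspondingly $M_{\Q} = \bigoplus_{i} M_{i}$ with $M_{i} \cong S_{i}^{m_{i}}$ for some multiplicities $m_i$. The hypothesis $M_{\ell} \cong \Lambda_{\ell}$ yields, after inverting $\ell$, an isomorphism $M_{\Q} \otimes_{\Q} \Q_{\ell} \cong \Lambda_{\Q} \otimes_{\Q} \Q_{\ell}$ of right $\Lambda_{\Q} \otimes \Q_{\ell}$-modules, and this isomorphism is automatically compatible with the central idempotents cutting out each factor $B_{i}$. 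Matching dimensions component by component gives $\dim_{\Q} M_{i} = \dim_{\Q} B_{i}$, hence $m_{i} = n_{i}$, and therefore $M_{\Q} \cong \Lambda_{\Q}$.

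For (iii), fix a prime $\ell$; the plan is to exhibit $\alpha \in M$ generating $M_{\ell}$ as right $\Lambda_{\ell}$-module. The set of such generators is the $\Lambda_{\ell}^{\times}$-orbit of any fixed generator, and $\Lambda_{\ell}^{\times}$ is open in $\Lambda_{\ell}$ because $\Lambda_{\ell}/\ell\Lambda_{\ell}$ is finite and units lift via a Nakayama/Hensel argument (unit-on-one-side in $\Lambda_\ell$ forces two-sided unit because $\Lambda_\Q \otimes \Q_\ell$ is finite-dimensional). Since the image of $M$ is dense in $M_{\ell}$, it meets this open orbit, producing $\alpha$. Tensoring $\alpha\Lambda_{\ell} = M_{\ell}$ with $\Q_{\ell}$ and using (ii) shows $\alpha$ also generates $M_{\Q}$ over $\Lambda_{\Q}$. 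Finally, from the identification $M_{(\ell)} = M_{\Q} \cap M_{\ell}$ inside $M \otimes \Q_{\ell}$ together with the bijection $\Lambda \otimes \Q_{\ell} \to M \otimes \Q_{\ell}$, $\lambda \mapsto \alpha\lambda$, every $m \in M_{(\ell)}$ is uniquely $\alpha\lambda$ with $\lambda \in \Lambda_{\Q} \cap \Lambda_{\ell} = \Lambda_{(\ell)}$, proving free rank one.

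For (i), I would separate the generation step from the splitting step. Starting from any $m_{0} \in M$ that generates $M_{\Q}$ (obtained from (ii) after clearing a denominator prime to nothing in particular), the finite abelian group $M/m_{0}\Lambda$ is supported at a finite set of primes $S$. By the density of $M$ in $\prod_{\ell \in S} M_{\ell}$ (strong approximation for the free $\Z$-module $M$) combined with the openness argument from (iii), I can find $m_{1} \in M$ generating $M_{\ell}$ for every $\ell \in S$; its own bad-prime set $S'$ is finite and disjoint from $S$, and a second approximation produces $m_{2} \in M$ generating $M_{\ell}$ for every $\ell \in S'$. Then $m_{1}\Lambda + m_{2}\Lambda$ agrees with $M$ at every $\ell$-localization, hence equals $M$, yielding a surjection $\pi \colon \Lambda^{2} \twoheadrightarrow M$. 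To split $\pi$, set $K = \ker\pi$; locally the sequence splits because $M_{\ell}$ is free, so by flat base change one obtains $\mathrm{Ext}^{1}_{\Lambda}(M, K) \otimes_{\Z} \Z_{\ell} \cong \mathrm{Ext}^{1}_{\Lambda_{\ell}}(M_{\ell}, K_{\ell}) = 0$ for every $\ell$. Since $\mathrm{Ext}^{1}_{\Lambda}(M, K)$ is a finitely generated abelian group (as $M$ is finitely presented over Noetherian $\Lambda$), this vanishing at every prime forces it to be zero, so $\pi$ splits and $M$ is a direct summand of $\Lambda^{2}$.

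The main obstacle I expect is this last splitting in (i): the passage from local splittings to a global splitting over a possibly noncommutative order $\Lambda$. My plan leans on the flat-base-change identity for $\mathrm{Ext}^{1}$ of finitely presented modules over Noetherian rings, together with the finite generation of $\mathrm{Ext}^{1}_{\Lambda}(M, K)$ as an abelian group; the remaining ingredients are semisimple linear algebra for (ii) and the density-plus-openness approximation for (iii) and the generation step of (i).
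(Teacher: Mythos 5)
Your proof is correct, and for parts (i) and (ii) it takes a genuinely different route from the paper's. The paper proves (i) and (ii) by citing Fr\"olich's theorem on locally free modules over orders: every locally free rank-one $\Lambda$-module is isomorphic to an ideal $\alpha\Lambda\subset\Lambda_{\Q}$ for some idele $\alpha$ of $\Lambda_{\Q}$, and $\alpha\Lambda\oplus\alpha^{-1}\Lambda$ is free of rank two, from which both (i) and (ii) are immediate. Your argument is instead self-contained: for (ii) you use semisimplicity of $\Lambda_{\Q}$ to decompose into simple factors, apply central idempotents to the isomorphism $M_{\Q}\otimes\Q_{\ell}\cong\Lambda_{\Q}\otimes\Q_{\ell}$, and match dimensions factor by factor; for (i) you build a surjection $\Lambda^{2}\twoheadrightarrow M$ from two elements of $M$ that between them generate all the localizations $M_{\ell}$, and then split it via the flat-base-change identity for $\mathrm{Ext}^{1}$ plus finite generation of $\mathrm{Ext}^{1}_{\Lambda}(M,K)$ as an abelian group (which forces it to vanish, since it vanishes after $\otimes_{\Z}\Z_{\ell}$ for all $\ell$). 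For (iii) your density-plus-openness argument, finding a generator $f_{\ell}\in M$ of $M_{\ell}$ and then showing it also generates $M_{\Q}$ and that $M_{(\ell)}=M_{\Q}\cap M_{\ell}=f_{\ell}\Lambda_{(\ell)}$, is essentially identical to the paper's. The trade-off: the paper's proof is shorter but leans on the cited result; yours makes the structure explicit and requires only standard homological facts, at the cost of the extra bookkeeping with the two generators $m_{1},m_{2}$ (you could in fact skip $m_{0}$ entirely and start by choosing $m_{1}\in M$ generating $M_{\ell}$ at a single prime, which already forces $M/m_{1}\Lambda$ finite). One small point to spell out if you write this up: your claim that $m_{1}$ generating some $M_{\ell}$ forces $m_{1}$ to generate $M_{\Q}$ uses (ii), via the dimension count $\dim_{\Q_{\ell}}(M_{\Q}\otimes\Q_{\ell})=\dim_{\Q_{\ell}}(\Lambda_{\Q}\otimes\Q_{\ell})$, to upgrade surjectivity of $\Lambda_{\Q}\to M_{\Q}$ to bijectivity and hence to finiteness of $M/m_{1}\Lambda$.
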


\begin{proof}
Let $J(\Lambda_{\Q})$ be the (multiplicative) {\sl idele group}  of $\Lambda_{\Q}$, i.e., the group of invertible elements of the {\sl adele ring} of $\Lambda_{\Q}$ \cite[p. 114]{Fro}. (In the notation of \cite[Sect. 2]{Fro}, 
$\mathfrak{o}=\Z,  \ K=\Q, A=\Lambda_{\Q}, \ \mathfrak{U}=\Lambda$.) 
To each $\alpha \in J(\Lambda_{\Q})$ corresponds a certain right $\Lambda$-submodule $\alpha \Lambda\subset \Lambda_{\Q}$ that is 
a locally free $\Lambda$-module of rank $1$ and a $\Z$-lattice of maximal rank in the
$\Q$-vector space $\Lambda_{\Q}$, i.e., the natural homomorphism of $\Q$-vector spaces
$\alpha \Lambda\otimes \Q\to \Lambda_{\Q}$ is an isomorphism \cite[p. 114]{Fro}.  This implies that $(\alpha \Lambda)_{\Q}$ is a free $\Lambda_{\Q}$-module of rank $1$.
In addition, the direct sum $\alpha \Lambda\oplus \alpha^{-1} \Lambda$ is a free right $\Lambda$-module of rank 2 \cite[Th. 1  on pp. 114--115]{Fro}.
This implies that $\alpha \Lambda$ is isomorphic to a direct summand of a rank $2$ free module; in particular, it is projective. 
By the same Theorem 1 of \cite{Fro}, every right locally free $\Lambda$-module $M$ of rank $1$ is isomorphic to $\alpha \Lambda$ for a suitable $\alpha$.
This proves (i) and (ii).

Let $f_0$ be a generator of the free $\Lambda_\Q$-module $M_{\Q}$ of rank $1$. Multiplying $f_0$ by a sufficiently divisible
positive integer, we may and will assume that
$f_0 \in M=M\otimes 1\subset M_{\Q}$.
Clearly,  the right $\Lambda\otimes\Q_{\ell}$-module 
$$M\otimes\Q_{\ell}=M_{\Q}\otimes_{\Q}\Ql= M_{\ell}\otimes_{\Z_{\ell}}\Ql$$
 is free of rank $1$ for all primes $\ell$ and
 $f_0$ is also a generator of $M\otimes\Q_{\ell}$. It is also clear that every generator $f_{\ell}$ of the 
$\Lambda_{\ell}$-module $M_{\ell}$  is a generator of the $\Lambda\otimes\Q_{\ell}$-module 
$M\otimes\Q_{\ell}$.  We claim that there is a generator $f_{\ell}$ that lies in $M$.
 Indeed, with respect to the  $\ell$-adic topology, the subset
$$M=M\otimes 1 \subset M\otimes\Z_{\ell}=M_{\ell}$$ is dense in 
$M_{\ell}$  while the set of generators of the free  $\Lambda_{\ell}$-module $M_{\ell}$ is open, because
the group of units $(\Lambda_{\ell})^{*}$ is open in $\Lambda_{\ell}$.
 This implies that there exists a (nonzero) generator 
$f_{\ell} \in M\subset M_{\ell}$
of the $\Lambda_{\ell}$-module $M_{\ell}$. 
Recall that $f_{\ell}$ is also a generator of the free $\Lambda\otimes\Q_{\ell}$-module $M\otimes\Q_{\ell}$.
This implies that there exists 
$\mu_0 \in (\Lambda\otimes\Ql)^{*}$
such that
$f_{\ell}=f_0 \mu_0 \in M\otimes\Q_{\ell}$.
On the other hand, since $f_{\ell}$ lies in the free rank $1$ $\Lambda_{\Q}$-module $M_{\Q}=f_0\Lambda_{\Q}$, we have
$\mu_0\in \Lambda_{\Q}$.
This implies that $\mu_{0}$ is {\sl not} a zero divisor in the finite-dimensional $\Q$-algebra $\Lambda_{\Q}$ (because it is invertible in $\Lambda\otimes\Ql$)
  and therefore lies in $\Lambda_{\Q}^{*}$.
It follows that $f_{\ell}$ is also a generator of the free $\Lambda_{\Q}$-module $M_{\Q}$ of rank $1$.

We want to prove that
$M_{(\ell)}=f_{\ell}[\Lambda\otimes\Z_{(\ell)}]$.
(This would prove that $M_{(\ell)}$ is a free right $\Lambda_{(\ell)}$-module  of rank $1$ with the generator $f_{\ell}$.)
For each $x \in M_{(\ell)}$ there exists a unique $\lambda \in \Lambda_{\ell}$ with $x= f\lambda$.
We need to prove that $\lambda \in \Lambda_{(\ell)}$. Notice that
$x \in M_{(\ell)}\subset M_{\Q}$.
Since $f_{\ell}$ is a generator of the free $\Lambda_{\Q}$-module $M_{\Q}$, there exists exactly one
$\mu_0 \in \Lambda_{\Q}$ such that
$x=f \mu_0$. 
We get the  equalities  $f  \mu_0=x=f \mu$ in $M\otimes\Q_{\ell}$.

Since $f_{\ell}$ is a generator of the  free $\Lambda\otimes\Ql$-module $M\otimes\Ql$, we get
$\mu=\mu_0$.
Since $\Lambda_{(\ell)}$  coincides with intersection of $\Lambda_{\ell}$ and $\Lambda_{\Q}$ in $\Lambda\otimes\Ql$,
we conclude that
$\mu=\mu_0 \in \Lambda_{(\ell)}$
and therefore
$x \in f [\Lambda\otimes\Z_{(\ell)}]$.
This implies that $M_{(\ell)}$ is a free right $\Lambda_{(\ell)}$ module of rank $1$, which proves (iii).
\end{proof}

\begin{cor}
\label{rankOneC}
Let $M$  be  a free commutative group of finite positive rank that is provided with a structure of a right $\Lambda$-module.  
Then $M$ is a locally free $\Lambda$-module of rank $1$ if and only if the right $\Lambda_{(\ell)}$-module $M_{(\ell)}$ is free of rank $1$ for all primes $\ell$.
\end{cor}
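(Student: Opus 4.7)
The plan is to observe that the corollary is really a matter of packaging both directions of the comparison between $\Lambda_{(\ell)}$-structure and $\Lambda_{\ell}$-structure; the hard work has already been done in Theorem \ref{freeRankOne}.

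First I would dispose of the forward ($\Rightarrow$) implication: if $M$ is locally free of rank $1$ in the sense of the definition, i.e.\ $M_{\ell}$ is free of rank $1$ over $\Lambda_{\ell}$ for every prime $\ell$, then the statement that $M_{(\ell)}$ is free of rank $1$ over $\Lambda_{(\ell)}$ is precisely assertion (iii) of Theorem \ref{freeRankOne}, so there is nothing to do.

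For the reverse ($\Leftarrow$) implication, I would argue by scalar extension from $\Z_{(\ell)}$ to $\Z_{\ell}$. The key identities are the canonical ring isomorphism
\[
\Lambda_{\ell} \;=\; \Lambda \otimes_{\Z} \Z_{\ell} \;\cong\; (\Lambda \otimes_{\Z} \Z_{(\ell)}) \otimes_{\Z_{(\ell)}} \Z_{\ell} \;=\; \Lambda_{(\ell)} \otimes_{\Z_{(\ell)}} \Z_{\ell},
\]
and the analogous one for $M$:
\[
M_{\ell} \;=\; M \otimes_{\Z} \Z_{\ell} \;\cong\; M_{(\ell)} \otimes_{\Z_{(\ell)}} \Z_{\ell}.
\]
If $f \in M_{(\ell)}$ is a generator of $M_{(\ell)}$ as a right $\Lambda_{(\ell)}$-module (so that the map $\Lambda_{(\ell)} \to M_{(\ell)}$, $\lambda \mapsto f\lambda$, is an isomorphism of right $\Lambda_{(\ell)}$-modules), then tensoring this isomorphism with $\Z_{\ell}$ over $\Z_{(\ell)}$ yields an isomorphism of right $\Lambda_{\ell}$-modules $\Lambda_{\ell} \xrightarrow{\sim} M_{\ell}$. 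Hence $M_{\ell}$ is free of rank $1$ over $\Lambda_{\ell}$, and since this holds for every $\ell$, $M$ is locally free of rank $1$ by definition.

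There is no serious obstacle here: the only mild point is to notice that the flat base change $- \otimes_{\Z_{(\ell)}} \Z_{\ell}$ sends free rank one $\Lambda_{(\ell)}$-modules to free rank one $\Lambda_{\ell}$-modules, which is automatic. All the real content of the equivalence is concentrated in the forward direction, which was already established in Theorem \ref{freeRankOne}(iii).
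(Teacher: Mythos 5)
Your proof is correct and follows exactly the same route as the paper: the forward direction is Theorem \ref{freeRankOne}(iii), and the reverse direction is base change from $\Z_{(\ell)}$ to $\Z_{\ell}$, which the paper dismisses in one line as ``clear.'' Your write-up merely makes that base-change step explicit.
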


\begin{proof}
Clearly, if $M_{(\ell)}$ is a free right $\Lambda_{(\ell)}$-module of rank $1$ then the right $\Lambda_{\ell}$-module $M_{\ell}$ is free of rank $1$.  The converse follows from Theorem \ref{freeRankOne}(iii).
\end{proof}

\begin{rem}\label{examplesM}
Suppose that $\Lambda$ is an {\sl order} in a number field $E$, i.e., $\Lambda$ is a finitely generated over $\Z$ a subring (with $1$) of $E$ such that
$\Lambda_{\Q}=E$.  Let $M$ be a $\Lambda$-module in $E$, i.e., a free commutative additive  (sub)group of finite rank in  $E$ such that $M \cdot\Lambda=M$.
In particular, $M_{\Q}=E$ is a free $E=\Lambda_{\Q}$-module of rank $1$.
\begin{itemize}
\item[(i)]
If $\Lambda$ is the ring of all integers in $E$ then it is a
Dedekind ring and each of its {\sl localizations} $\Lambda_{(\ell)}$ is
a Dedekind ring with finitely many maximal ideals and therefore is a
{\sl principal ideal domain} \cite[Ch. III, Prop. 2.12 on
p.93]{Lorenzini}. This implies that $M_{(\ell)}$ is a free
$\Lambda_{(\ell)}$-module, whose rank is obviously $1$.  By Corollary \ref{rankOneC},
 $M$ is locally free of rank $1$.

\item[(ii)]
Suppose that $E$ is a quadratic field. We don't impose any restrictions on $\Lambda$ but instead assume that $\End_{\Lambda}(M)=\Lambda$.
Then it is known \cite[Lemma 2 on p. 55]{BSh} that for each  prime $\ell$ there is a nonzero ideal $\mathfrak{J}\subset \Lambda$ such that the order of the finite quotient $\Lambda/\mathfrak{J}$ is prime to $\ell$ and  the $\Lambda$-modules $M$ and $\mathfrak{J}$ are isomorphic. This implies that the $\Lambda_{(\ell)}$-module $J_{(\ell)}=\Lambda_{(\ell)}$ is free and therefore the $\Lambda_{(\ell)}$-module $M_{(\ell)}$ is also free and its rank is obviously $1$. 
 By Corollary \ref{rankOneC},
 $M$ is locally free of rank $1$.
\end{itemize}
\end{rem}

Now we are going to use Theorem \ref{freeRankOne}, in order to
construct abelian varieties $A\otimes M$ over $K$ that are {\sl
almost isomorphic} to a given $A$.
Notice that our $A\otimes M$ are a rather special {\sl naive} case
of powerful {\sl Serre's tensor construction} (\cite[Sect.
7]{Conrad}, \cite[Sect. 1.7.4]{CCFO}).

Suppose we are given a a free commutative group $M$ of finite
(positive) rank that is provided with a structure of a right locally free
$\Lambda=\End(A)$-module  of rank $1$.  Let $F_2$ be a free right $\Lambda$-module of rank $2$.
It follows from Theorem \ref{freeRankOne}(i) that
there is an   endomorphism
$\gamma: F_2 \to F_2$
of the right $\Lambda$-module $F_2$ such that $\gamma^2=\gamma$ and
 whose image
$M^{\prime}=\gamma(F_2)$ is isomorphic to $M$. Notice that
$\End_{\Lambda}(F_2)$ is the matrix algebra $\MM_2(\Lambda)$ of size
$2$ over $\Lambda$.  So, the idempotent
$$\gamma \in \End_{\Lambda}(F_2)=\MM_2(\Lambda)=\MM_2(\End(A))=\End(A^2)$$
where $A^2=A\times A$.
Let us define the $K$-abelian (sub)variety
$$B=A\otimes M:=\gamma(A^2)\subset A^2.$$
Clearly, $B$ is a direct factor of $A^2$. More precisely, if we consider the
\newline
 $K$-abelian (sub)variety
$C=(1-\gamma)(A^2)\subset A^2$
then the natural homomorphism 
$B \times C \to A^2, \ (x,y) \mapsto x+y$
of abelian varieties over $K$
is an isomorphism, i.e., $A^2=B \times C$. This implies that the right $\End(A)$-module $\Hom(A,B)$ coincides with
$$\gamma \Hom(A,A^2) \subset \Hom(A,A^2)=\End(A)\oplus \End(A)=F_2$$
and therefore the right $\End(A)$-module $\Hom(A,B)$ is canonically isomorphic to $\gamma(F_2)=M^{\prime} \cong M$.
It also follows that for every prime $\ell$
$$\gamma(A^2[\ell])=B[\ell] . \eqno{(**)}$$

\begin{thm}
\label{twistM}
Let us consider the abelian variety $B=A\otimes M$ over $K$. Then:
\begin{itemize}
\item[(i)]
$A$ and $B$ are isogenous over $K$.
\item[(ii)]
The right $\End(A)$-module $\Hom(A,B)$ is isomorphic to $M$.
\item[(iii)]
$A$ and $B$ are almost isomorphic.
\end{itemize}
\end{thm}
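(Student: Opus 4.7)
The plan is to prove the three parts in order, with (iii) being the only nontrivial one and parts (i) and (ii) following either formally or from the construction. For part (ii), the identification $\Hom(A,B) \cong M$ of right $\End(A)$-modules has essentially already been carried out in the paragraph preceding the theorem: the orthogonal idempotents $\gamma$ and $1-\gamma$ in $\End(A^{2}) = \MM_{2}(\End(A))$ give a product decomposition $A^{2} = B \times C$ (where $C := (1-\gamma)(A^{2})$), so $\Hom(A, A^{2}) = \Hom(A, B) \oplus \Hom(A, C) = F_{2}$ as right $\End(A)$-modules, and $\Hom(A, B)$ is cut out as $\gamma F_{2} = M' \cong M$.

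For part (iii), I will show that for every prime $\ell$ the Tate modules $T_{\ell}(A)$ and $T_{\ell}(B)$ are isomorphic as $\Z_{\ell}[G_{K}]$-modules, and then invoke Lemma~\ref{ellIso}. The key is to exploit compatibility of the idempotent $\gamma$ with the evaluation map. Concretely, there is a natural isomorphism of $\Z_{\ell}[G_{K}]$-modules
\[
\mathrm{ev}\colon F_{2} \otimes_{\Lambda} T_{\ell}(A) \xrightarrow{\;\sim\;} T_{\ell}(A^{2}) = T_{\ell}(A)^{2}, \qquad \phi \otimes t \mapsto \phi_{\ast}(t),
\]
in which $F_{2} = \End(A) \oplus \End(A)$ carries its right $\Lambda$-action and $T_{\ell}(A)$ carries its canonical left $\Lambda_{\ell}$-action. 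The map $\mathrm{ev}$ is equivariant for the action of $\gamma \in \MM_{2}(\Lambda)$, which acts on $F_{2}$ by left matrix multiplication (commuting with the right $\Lambda$-structure, hence descending to the tensor product) and on $T_{\ell}(A)^{2} = T_{\ell}(A^{2})$ via the endomorphism $\gamma$ of $A^{2}$. Taking images under $\gamma$ on both sides, and using the decomposition $T_{\ell}(A^{2}) = T_{\ell}(B) \oplus T_{\ell}(C)$ coming from $A^{2} = B \times C$, yields
\[
M_{\ell} \otimes_{\Lambda_{\ell}} T_{\ell}(A) \;=\; \gamma F_{2} \otimes_{\Lambda} T_{\ell}(A) \xrightarrow{\;\sim\;} \gamma(T_{\ell}(A)^{2}) \;=\; T_{\ell}(B).
\]
The hypothesis that $M$ is locally free of rank one gives $M_{\ell} \cong \Lambda_{\ell}$ as right $\Lambda_{\ell}$-modules, so $M_{\ell} \otimes_{\Lambda_{\ell}} T_{\ell}(A) \cong T_{\ell}(A)$; composing produces the desired $\Z_{\ell}[G_{K}]$-isomorphism $T_{\ell}(A) \cong T_{\ell}(B)$, and Lemma~\ref{ellIso} then supplies an isogeny $A \to B$ of degree prime to $\ell$.

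Part (i) will follow immediately from (iii), since almost isomorphic abelian varieties are in particular isogenous (as remarked after the definition). The one technicality I expect to require care is the left-right module bookkeeping: verifying that $\gamma$ really commutes with the right $\Lambda$-action on $F_{2}$ (so that $\gamma F_{2}$ is a right $\Lambda$-submodule and the $\gamma$-image on the source of $\mathrm{ev}$ is compatible with the $\Lambda$-balanced tensor product), and checking that $\mathrm{ev}$ is $\gamma$-equivariant on the nose rather than only up to an identification. Once these bookkeeping issues are pinned down, the argument is formal.
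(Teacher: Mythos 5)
Your proof is correct, and it takes a genuinely different route from the paper's for parts (i) and (iii). The paper proves (i) first, without reference to Tate modules: it applies Lemma~\ref{rankOneQ} to get a subvariety $B_0 \subset B$ with $\dim B_0 = \dim A$ and $\Hom(A,B) = \Hom(A,B_0)$, and then rules out $B_0 \subsetneq B$ by Poincar\'e complete reducibility (a complementary factor $B_1$ would satisfy $\Hom(A,B_1)=0$, contradicting $B_1 \subset A^2$). For (iii), the paper picks an explicit generator $\phi$ of the free rank-one module $\Hom(A,B) \otimes \Z_\ell$, writes the surjection $\gamma \colon A^2 \onto B$ as $(\phi u_1, \phi u_2)$, and deduces from the identity $\gamma(A^2[\ell]) = B[\ell]$ that $\phi$ induces a surjection (hence bijection) on $\ell$-torsion; this makes $\phi$ an isogeny of degree prime to $\ell$ by direct inspection, with no appeal to Faltings. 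Your argument instead computes $T_\ell(B)$ once and for all via the Serre-tensor-construction identity $T_\ell(A \otimes M) \cong M_\ell \otimes_{\Lambda_\ell} T_\ell(A)$, obtains $T_\ell(A) \cong T_\ell(B)$ from local freeness of $M$, and then invokes Lemma~\ref{ellIso}; since the implication (ii)$\Rightarrow$(i) in that lemma relies on Faltings' theorem $\Hom_{G_K}(T_\ell(A), T_\ell(B)) = \Hom(A,B) \otimes \Z_\ell$, your route is shorter and more conceptual but imports Faltings where the paper's does not (this is harmless here, since the standing hypothesis that $K$ is finitely generated over $\Q$ is already in force). Your (i) then comes for free from (iii), whereas the paper establishes (i) independently. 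The bookkeeping points you flagged do check out: $\gamma \in \MM_2(\Lambda) = \End_\Lambda(F_2)$ commutes with the right $\Lambda$-action, so it descends to $F_2 \otimes_\Lambda T_\ell(A)$, and the evaluation map is $\gamma$-equivariant and $G_K$-equivariant on the nose.
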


\begin{proof}
We have already seen that $\Hom(A,B)\cong M$, which proves (ii).

Since the right $\End(A)\otimes\Q$-module $M\otimes \Q$ is free of rank $1$, the same is true for the  right $\End(A)\otimes\Q$-module $\Hom(A,B)$. By Lemma \ref{rankOneQ},
$\dim(A) \le \dim(B)$ and there exists a $\dim(A)$-dimensional abelian $K$-subvariety $B_0\subset B$ such that $A$ and $B_0$ are isogenous over $K$ and
$$\Hom(A,B)=\Hom(A,B_0).\eqno{(***)}$$
We claim that $B=B_0$. Indeed, if $B_0 \ne B$ then, by the Poincar\'e Complete Reducibility theorem \cite[Th. 6 on p. 28]{Lang}, there is an ``almost complimentary" abelian $K$-subvariety $B_1\subset B$ of positive dimension $\dim(B)-\dim(B_0)$ such that the intersection $B_0\bigcap B_1$ is finite and $B_0+B_1=B$. It follows from (***) that $\Hom(A,B_1)=\{0\}$.
However, $B_1\subset B \subset A^2$ is an abelian $K$-subvariety of $A^2$ and therefore there is a surjective homomorphism $A^2 \to B$ and therefore there exists a nonzero homomorphism $A\to B$. This is a contradiction, which proves that $B=B_0$, the right $\End(A)$-module $\Hom(A,B)$ is isomorphic to $M$, and $A$ and $B$ are isogenous over $K$.  In particular, $\dim(A)=\dim(B)$.  This proves (i).

Let $\ell$ be a prime.
Since $M\otimes\Z_{\ell}$ is a free right $\End(A)\otimes\Z_{\ell}$-module of rank $1$, $\Hom(A,B)\otimes\Z_{\ell}$ is a free right $\End(A)\otimes\Z_{\ell}$-module of rank $1$. Let us choose a generator
$\phi \in \Hom(A,B)$ of the module $\Hom(A,B)\otimes\Z_{\ell}$.  The {\sl surjection}
$\gamma:A^2\to B\subset A^2$
 is defined by a certain pair of homomorphisms $\phi_1, \phi_2:A\to B$, i.e.,
$$\gamma(x_1,x_2)=\phi_1(x_1)+\phi_2(x_2) \ \forall (x_1,x_2) \in A^2.$$
Since $\phi$ is the generator, there are $u_1,u_2 \in \End(A)\otimes\Z_{\ell}$ such that
$$\phi_1=\phi u_1, \ \phi_1=\phi u_1$$
in $\Hom(A,B)\otimes \Z_{\ell}$. It follows that
$$\gamma(A^2[\ell])=\phi_1 (A[\ell])+ \phi_2 (A[\ell])=
\phi u_1 (A[\ell])+ \phi u_2 (A[\ell])\subset \phi(A[\ell])\subset B[\ell].$$
By (**), $\gamma(A^2[\ell])=B[\ell]$. This implies that $\phi$ induces a surjective homomorphism $A[\ell] \to B[\ell]$. Since finite groups $A[\ell]$ and $B[\ell]$ have the same order, $\phi$ induces an isomorphism $A[\ell] \to B[\ell]$. This implies that $\ker(\phi)$ does not contain points of order $\ell$ and therefore is an {\sl isogeny} of degree prime to $\ell$.  This proves (iii).
\end{proof}

\begin{cor}
\label{isomTwist}
Suppose that for each $i=1,2$ we are given a commutative free group $M_i$ of finite positive rank provided with  the structure of a right locally free $\End(A)$-module of rank $1$.
Then abelian varieties $B_1=A\otimes M_1$ and $B_2=A\otimes M_2$ are isomorphic over $K$ if and only if the $\End(A)$-modules $M_1$ and $M_2$ are isomorphic.
\end{cor}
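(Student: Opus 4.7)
The plan is to reduce this to a direct application of Corollary \ref{BisoC}, using Theorem \ref{twistM} to identify the relevant $\End(A)$-modules and to verify the hypothesis of almost isomorphism.

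First, I would apply Theorem \ref{twistM}(ii) to each $B_i = A \otimes M_i$: this gives a canonical isomorphism of right $\End(A)$-modules $\Hom(A, B_i) \cong M_i$ for $i = 1, 2$. Thus the condition ``$M_1 \cong M_2$ as right $\End(A)$-modules'' is equivalent to ``$\Hom(A, B_1) \cong \Hom(A, B_2)$ as right $\End(A)$-modules.''

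Next, I need to check that Corollary \ref{BisoC} applies, i.e., that $A, B_1, B_2$ are mutually almost isomorphic. By Theorem \ref{twistM}(iii), $A$ is almost isomorphic to $B_1$ and $A$ is almost isomorphic to $B_2$. Since almost isomorphism is an equivalence relation (as noted just after the definition in this section), it follows that $B_1$ and $B_2$ are also almost isomorphic, and hence the triple $\{A, B_1, B_2\}$ satisfies the hypotheses of Corollary \ref{BisoC}.

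Finally, Corollary \ref{BisoC} yields that $B_1 \cong B_2$ over $K$ if and only if $\Hom(A, B_1) \cong \Hom(A, B_2)$ as right $\End(A)$-modules, which by the first step is equivalent to $M_1 \cong M_2$ as right $\End(A)$-modules. This finishes the argument. I do not anticipate any genuine obstacle: all the real work has been absorbed into Theorem \ref{twistM} and Corollary \ref{BisoC}, and this corollary is essentially a packaging statement.
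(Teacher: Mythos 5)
Your proof is correct and matches the paper's argument exactly: invoke Theorem \ref{twistM}(ii) to identify $\Hom(A,B_i)\cong M_i$, use Theorem \ref{twistM}(iii) (with the noted fact that almost isomorphism is an equivalence relation) to verify the hypotheses of Corollary \ref{BisoC}, and conclude. The only difference is that you spell out the transitivity step that the paper leaves implicit.
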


\begin{proof}
By Theorem \ref{twistM}(ii), the right $\End(A)$-module $\Hom(A,B_i)$ is isomorphic to $M_i$. Now the result follows from Theorem \ref{twistM}(iii)
combined with Corollary  \ref{BisoC}.
\end{proof}

\begin{cor}
\label{AtensorHomAB}
Let $A$ and $B$ be abelian varieties over $K$ of positive dimension. Suppose that
 the Galois modules $T_{\ell}(A)$ and $T_{\ell}(B)$ are isomorphic for all primes $\ell$. Then  abelian varieties $B$ and $C:=A\otimes\Hom(A,B)$ are isomorphic over $K$.
\end{cor}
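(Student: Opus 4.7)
\proof[Proof plan for Corollary \ref{AtensorHomAB}]
The plan is to view $M := \Hom(A,B)$ as a right module over $\Lambda := \End(A)$ and recognize it as precisely the kind of locally free rank-one module the twisting construction $A \otimes (\cdot)$ takes as input, so that Corollary \ref{BisoC} finishes the job.

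First I would observe that $M$ is a free $\Z$-module of finite positive rank. Indeed, the hypothesis together with Lemma \ref{ellIso} shows that for every prime $\ell$ there is an isogeny $A \to B$ (of degree prime to $\ell$), so $A$ and $B$ are isogenous, and $M \otimes \Q$ is a one-dimensional free right $\End^0(A)$-module; in particular $M$ has finite positive $\Z$-rank, and it is torsion-free so it is $\Z$-free. Moreover, by the final assertion of Lemma \ref{ellIso}, $M_{(\ell)} = M \otimes \Z_{(\ell)}$ is a free right $\Lambda_{(\ell)}$-module of rank $1$ for every prime $\ell$. Corollary \ref{rankOneC} then tells us that $M$ is locally free of rank $1$ as a right $\Lambda$-module, exactly the hypothesis required to apply the twisting construction.

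Next I would form the abelian variety $C := A \otimes M = A \otimes \Hom(A,B)$ as in the construction preceding Theorem \ref{twistM}. That theorem yields three facts I will use: (i) $A$ and $C$ are isogenous; (ii) there is an isomorphism of right $\End(A)$-modules $\Hom(A,C) \cong M = \Hom(A,B)$; and (iii) $A$ and $C$ are almost isomorphic. Combined with the hypothesis that $A$ and $B$ are almost isomorphic (which is just Lemma \ref{ellIso} applied to each prime $\ell$), and with the fact, noted in the text, that almost isomorphism is an equivalence relation, it follows that $A$, $B$, and $C$ are mutually almost isomorphic abelian varieties over $K$ of positive dimension.

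The conclusion is then immediate from Corollary \ref{BisoC}: applied to the triple $(A, B, C)$, it asserts that $B$ and $C$ are isomorphic over $K$ if and only if the right $\End(A)$-modules $\Hom(A,B)$ and $\Hom(A,C)$ are isomorphic; and we have just recorded the latter isomorphism in step (ii) of Theorem \ref{twistM}. Hence $B \cong C = A \otimes \Hom(A,B)$ over $K$. No step presents a real obstacle here, since all the work has been packaged into Theorem \ref{freeRankOne}, Theorem \ref{twistM}, and Corollary \ref{BisoC}; the only thing to verify is that the hypothesis $T_\ell(A) \cong T_\ell(B)$ for all $\ell$ feeds correctly into the local-freeness input of the twisting construction, which is precisely what Lemma \ref{ellIso} and Corollary \ref{rankOneC} arrange.
\endproof
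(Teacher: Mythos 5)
Your proof is correct and follows essentially the same route as the paper: identify $\Hom(A,C) \cong \Hom(A,B)$ via Theorem \ref{twistM}(ii), note that $A$ and $C$ are almost isomorphic by Theorem \ref{twistM}(iii) while $A$ and $B$ are almost isomorphic by Lemma \ref{ellIso}, and conclude with Corollary \ref{BisoC}. The only difference is that you spell out, via Lemma \ref{ellIso} and Corollary \ref{rankOneC}, why $\Hom(A,B)$ is a locally free rank-one right $\End(A)$-module — the hypothesis needed for $A \otimes \Hom(A,B)$ to make sense — a point the paper's proof leaves implicit.
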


\begin{proof}
By Theorem \ref{twistM}(ii), the right $\End(A)$-module $\Hom(A,C)$ is isomorphic to $\Hom(A,B)$.  Now  the result follows from Theorem \ref{twistM}(iii)
combined with Corollary  \ref{BisoC}.
\end{proof}

\begin{rem}
Suppose that $A$ is the product $A_1 \times A_2$ where $A_1$ and
$A_2$ are abelian varieties of positive dimension over $K$ with
$\Hom(A_1,A_2)=\{0\}$. Then $\End(A)=\End(A_1)\oplus \End(A_2)$.
Suppose that for each  $i=1,2$ we are given a commutative free group
$M_i$ of finite positive rank provided with the structure of a right locally free
$\End(A_i)$-module  of rank $1$.
 Then the direct sum $M=M_1\oplus M_2$ becomes a right locally free module of rank $1$ over the ring
$\End(A_1)\oplus \End(A_2)=\End(A)$.
 There is an obvious canonical
isomorphism between abelian varieties $A\otimes M$ and $(A_1\otimes
M_1)\times (A_2\otimes M_2)$ over $K$.

For example, suppose  $A_1$ is principally polarized, 
$\End(A_1)=\Z$ and  all $\bar{K}$-endomorphisms of $A$ are defined over $K$; in particular, $A$ is absolutely simple.
Let us take  $M_1=\Z$. Let $A_2$ be an elliptic curve such
that $\End(A_2)$ is the ring of integers in an imaginary quadratic
field with class number $>1$. Clearly, $\Hom(A_1,A_2)=\{0\}$. 
Actually, every $\bar{K}$-homomorphism between $A_1$ and $A_2$ is $0$.
Let
$M_2$ be a {\sl non-principal} ideal in $\End(A_2)$. Then elliptic
curves $A_2$ and $A_2\otimes M$ are almost isomorphic but are {\sl not
isomorphic} over $K$ and even over $\bar{K}$. This implies that $A\otimes M=A_1\times (A_2\otimes
M_2)$ is almost isomorphic over $K$ but  is {\sl not} isomorphic to $A=A_1\times
A_2$ over $\bar{K}$. On the other hand, both $A$ and $A\otimes M$ are principally
polarized, since $A_1$ is principally polarized while both $A_2$ and
$A_2\otimes M_2$ are elliptic curves.
\end{rem}

\section{Moduli of curves}

The moduli space of smooth projective curves of genus $g$ is denoted by $\M_g$. It is also
an orbifold and we will consider its fundamental group as such. For definitions see \cite{Hain}. It is defined
over $\Q$ and thus we can consider it over an arbitrary number field $K$.  As per our earlier
conventions, $\bar{\M}_g$ is the base change of $\M_g$ to an algebraic closure of $\Q$ and not a compactification.

Let $X$ be a curve of genus $g$ defined over $K$. There is a map
(an arithmetic analogue of the Dehn-Nielsen-Baer theorem, see \cite{MT},
in particular, lemma 2.1)
$\rho: \pi_1(\M_g) \to \Out(\pi_1(\bar{X}))$. This follows by considering the
universal curve $\Cu_g$ of genus $g$ together with the map $\Cu_g \to \M_g$,
so $X$ can be viewed as a fiber of this map. This gives rise to the fibration
exact sequence

$$1 \to \pi_1(\bar{X}) \to \pi_1(\Cu_g) \to \pi_1(\M_g) \to 1 $$

\noindent
and the action of $\pi_1(\Cu_g)$ on $\pi_1(\bar{X})$ gives $\rho$. Now,
$X$, viewed as a point on $\M_g(K)$, gives a map
$\sigma_{\M_g/K}(X):G_K \to \pi_1(\M_g)$.
As pointed out in \cite{MT}, $\rho \circ \sigma_{\M_g/K}(X)$ induces a map
$G_K \to \Out(\pi_1(\bar{X}))$ which is none other than the map obtained
from the exact sequence (\ref{fund}) by letting $\pi_1(X)$ act on
$\pi_1(\bar{X})$ by conjugation. Combining this with
Mochizuki's theorem \ref{moch} gives:

\begin{thm}
For any field $K$ contained in a finite extension of a $p$-adic field, the section map $\sigma_{\M_g/K}$ is injective.
\end{thm}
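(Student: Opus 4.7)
The strategy is to reduce injectivity of $\sigma_{\M_g/K}$ to Mochizuki's theorem (Theorem \ref{moch}) by using the universal curve $\Cu_g \to \M_g$ to convert a coincidence of sections of the fundamental exact sequence for $\M_g$ into an isomorphism of fundamental groups of the two underlying curves. Concretely, suppose $X, Y \in \M_g(K)$ satisfy $\sigma_{\M_g/K}(X) = \sigma_{\M_g/K}(Y)$. Then the associated sections $s_X, s_Y \colon G_K \to \pi_1(\M_g)$ are conjugate by some $\gamma \in \pi_1(\bar{\M}_g)$, say $s_Y(g) = \gamma\, s_X(g)\, \gamma^{-1}$ for every $g \in G_K$.

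Next, I would invoke the fibration exact sequence
$$1 \to \pi_1(\bar X) \to \pi_1(\Cu_g) \to \pi_1(\M_g) \to 1$$
attached to the universal curve. Viewing $X$ (respectively $Y$) as the fiber of $\Cu_g \to \M_g$ over the point $X$ (respectively $Y$) of $\M_g(K)$, functoriality of $\pi_1$ identifies $\pi_1(X)$, as an extension of $G_K$ by $\pi_1(\bar X)$, with the preimage of $s_X(G_K)$ inside $\pi_1(\Cu_g)$, and analogously for $\pi_1(Y)$. Since $\gamma$ lies in the kernel $\pi_1(\bar{\M}_g)$ of $\pi_1(\M_g) \to G_K$, any lift $\tilde\gamma \in \pi_1(\Cu_g)$ of $\gamma$ has the property that conjugation $x \mapsto \tilde\gamma\, x\, \tilde\gamma^{-1}$ carries the preimage of $s_X(G_K)$ onto the preimage of $s_Y(G_K)$ and descends to the identity on $G_K$. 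This yields an isomorphism $\pi_1(X) \xrightarrow{\sim} \pi_1(Y)$ inducing the identity on $G_K$ via the maps $p_X$ and $p_Y$.

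With that in hand, Theorem \ref{moch} applies directly: since $g \ge 2$ (tacit, as Mochizuki requires genus bigger than one) and $K$ is a subfield of a finitely generated extension of $\Q_p$, it produces an isomorphism $X \cong Y$ over $K$. Because $K$-isomorphic smooth projective curves of genus $g$ define the same point of $\M_g(K)$, this gives injectivity of $\sigma_{\M_g/K}$.

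The main obstacle I anticipate is the stack-theoretic bookkeeping in the second step: one must rigorously identify $\pi_1(X)$, as an extension of $G_K$, with the preimage of $s_X(G_K)$ in $\pi_1(\Cu_g)$, keeping careful track of base points and of the fact that $\M_g$ is only an orbifold. I would expect to handle this cleanly by first passing to a fine moduli scheme via a sufficiently high level structure, verifying the preimage description there (where it is the standard homotopy exact sequence for a morphism of schemes), and then descending; after that, the remainder of the argument is purely formal.
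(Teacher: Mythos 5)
Your proposal is correct and follows essentially the same route as the paper: both pass through the universal curve $\Cu_g \to \M_g$, use the fibration exact sequence $1 \to \pi_1(\bar X) \to \pi_1(\Cu_g) \to \pi_1(\M_g) \to 1$, and conclude via Mochizuki's Theorem \ref{moch}. The one stylistic difference is how the isomorphism $\pi_1(X)\cong\pi_1(Y)$ over $G_K$ is extracted: the paper factors the section through the arithmetic Dehn--Nielsen--Baer map $\rho\colon \pi_1(\M_g)\to\Out(\pi_1(\bar X))$, recovering the outer Galois representation of $X$, and then (implicitly) uses that $\pi_1(\bar X)$ is center-free for $g\ge 2$, so the extension class of $\pi_1(X)$ is determined by this outer action; you instead identify $\pi_1(X)$ with the preimage of $s_X(G_K)$ in $\pi_1(\Cu_g)$ and conjugate by a lift of the element $\gamma\in\pi_1(\bar\M_g)$ relating the two sections. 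Your version has the small advantage of avoiding the center-free input (it produces the isomorphism of extensions directly), at the cost of the base-point/stack bookkeeping you flag, which you correctly observe can be handled by passing to a rigidifying level structure — the same remedy the paper itself invokes when introducing $\pi_1(\A_g)$ and $\pi_1(\M_g)$ as orbifold fundamental groups.
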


The following result confirms a conjecture of
Stoll \cite{Stoll} if we assume that $\sigma_{\M_g/K}$ surjects onto
$S_0(K,\M_g)$.

\begin{thm}
Assume that $\sigma_{\M_g/K}(\M_g(K)) = S_0(K,\M_g)$ for all $g>1$ and all
number fields $K$. Then $\sigma_{X/K}(X(K)) = S(K,X)$ for all smooth
projective curves of genus at least two and all number fields $K$.
\end{thm}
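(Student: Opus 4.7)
Let $X$ be a smooth projective curve of genus $g\geq 2$ over the number field $K$, and let $s\in S(K,X)$; the Selmer condition supplies $P_v\in X(K_v)$ realizing $s|_{G_{K_v}}$ at every place $v$, and we aim to produce $P\in X(K)$ with $\sigma_{X/K}(P)=s$. The strategy is to promote the pair $(X,s)$ to an integral Selmer section on some $\M_{g'}$ and then apply the hypothesized section conjecture for moduli of curves to extract both $X$ and the point.

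The key device is the universal curve $\Cu_g\to\M_g$, of which $X$ is the fiber over $[X]\in\M_g(K)$. The fibration exact sequence
\[
1 \to \pi_1(\bar X) \to \pi_1(\Cu_g) \to \pi_1(\M_g) \to 1,
\]
combined with the canonical section $\sigma_{\M_g/K}([X])$ and the given $s$, assembles a section $\tilde s\in H(K,\Cu_g)$; each local point $(X_{K_v},P_v)\in\Cu_g(K_v)$ realizes $\tilde s|_{G_{K_v}}$ by functoriality applied to the inclusion of the fiber $X_{K_v}\hookrightarrow \Cu_{g,K_v}$. Projectivity of $X$ forces $P_v\in X(\Oc_v)$, and good reduction of $X$ at almost all $v$ then places $\tilde s$ in $S_0(K,\Cu_g)$.

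Since the hypothesis concerns $\M_{g'}$ rather than $\Cu_g$, one bridges by constructing a canonical functorial morphism $\Phi\colon\Cu_g\to\M_{g'}$ for some $g'=g'(g)$ (passing, if necessary, to a finite \'{e}tale cover of $\Cu_g$ to neutralize discrete choices) that is injective on $K$-points up to finite ambiguity. A natural candidate sends $(Y,Q)$ to a canonical Galois cover $Z\to Y$ ramified only at $Q$, with fixed ramification type ensuring $g(Z)=g'$ depends only on $g$; then $(Y,Q)$ is recoverable from $Z$ via its Galois/Hurwitz data. Pushing $\tilde s$ forward along $\Phi_\ast$ yields $s'\in S_0(K,\M_{g'})$ (integrality propagating over the good-reduction locus), and the hypothesis supplies $[Z_0]\in\M_{g'}(K)$ with $\sigma_{\M_{g'}/K}([Z_0])=s'$. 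Inverting $\Phi$ on $K$-points recovers a pair $(X_0,P_0)$ with $X_0\cong X$ over $K$ (since the image of either in $\M_g$ is determined by $s'$), producing $P:=P_0\in X(K)$; the identity $\sigma_{X/K}(P)=s$ is then checked locally against the original $P_v$'s.

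The principal obstacle is the construction of $\Phi$: it must be canonical enough to commute with base change to all $K_v$ simultaneously, land in a bona fide moduli of smooth curves of a single genus rather than in a compactification boundary, and be essentially injective on $K$-points. Realizing this---via a Hurwitz moduli of Galois covers with a single branch point, possibly preceded by a level structure imposing enough divisibility on $[P]\in\Pic(X)$ for a cyclic cover of the desired type to exist---accounts for the bulk of the technical work.
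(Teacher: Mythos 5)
Your overall strategy---map $X$ (or the pair $(X,s)$) into a moduli space $\M_{g'}$ by a Kodaira--Parshin/Hurwitz-type cover construction, apply the hypothesis on $\M_{g'}$, and try to extract a $K$-rational point of $X$---is the same idea the paper uses. But you have flagged, without resolving, exactly the obstruction that the paper has to engineer around, and the gap is genuine.

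The construction sending a pointed curve $(Y,Q)$ to a Galois cover branched only at $Q$ is not definable over $K$: even after rigidifying with level structure, the branched covers of prescribed type form a positive-dimensional family, and a choice of one per fiber requires adjoining roots of line bundles, selecting a component of a Hurwitz space, etc. The most one gets canonically is a morphism defined over a finite extension $L/K$ (and even then $X$ must first be replaced by a finite \'etale cover $X'$). Your proposal requires $\Phi$ over $K$ so that you can ``invert $\Phi$ on $K$-points'' and land back in $X(K)$; as stated this step simply does not exist. The paper instead works entirely over $L$: it produces a non-constant map $X\otimes L\to\M_g$ with image $Y$, pushes the restricted section $s|_{G_L}$ forward, and applies the hypothesis to get a point $P\in\M_g(L)$. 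Mochizuki injectivity (over the local fields $L_v$) shows $P$ lies in $Y(L)$, not just $\M_g(L)$. One then pulls the Galois orbit of $P$ back along $X\to Y$ to get a \emph{zero-dimensional $K$-subscheme} of $X$. This $0$-cycle has local points everywhere (because $s$ was Selmer) and is unobstructed by all abelian covers of $X$ (because it is compatible with $s$); Stoll's Proposition 5.2 on finite abelian descent for $0$-cycles then yields a genuine $K$-rational point of $X$ inducing $s$. It is precisely this ``Galois orbit plus Stoll'' descent from $L$ to $K$ that is missing from your argument, and it is not a technicality one can paper over by choosing $\Phi$ more cleverly: no choice of $\Phi$ over $K$ exists in general.

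A secondary issue: you route through the universal curve $\Cu_g$ and a map $\Cu_g\to\M_{g'}$, asserting the Selmer section $s$ of $X$ assembles with $\sigma_{\M_g/K}([X])$ into an element of $S_0(K,\Cu_g)$ whose pushforward lies in $S_0(K,\M_{g'})$. That part is essentially correct in spirit (the paper's $\gamma$ plays the same role, composed with $s$), but it only survives over $L$, for the reasons above, and this is why the paper phrases everything over $L$ from the outset.
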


\begin{proof}
For any algebraic curve $X/K$ there is a non-constant map $X \to \M_g$
with image $Y$, say, for some $g$,
defined over an extension $L$ of $K$, given by
the Kodaira-Parshin construction. This gives a map
$\gamma: \pi_1(X\otimes L) \to \pi_1(\M_g \otimes L)$, over $L$. Let $s \in S(K,X)$,
then $\gamma \circ (s |_{G_L}) \in S_0(L,\M_g)$ and the assumption of the theorem yields that
$\gamma \circ (s |_{G_L}) = \sigma_{\M_g/L}(P), P \in \M_g(L)$. We can combine this with
the injectivity of $\sigma_{\M_g/K_v}$ (Mochizuki's theorem) to deduce that
in fact $P \in Y(L_v)\cap \M_g(L) = Y(L)$. We can consider the pullback to
$X$ of the Galois orbit of $P$, which gives us a zero dimensional scheme in
$X$ having points locally everywhere and, moreover, being unobstructed by
every abelian cover coming from an abelian cover of $X$. By the work of
Stoll \cite{Stoll}, Proposition 5.2, we conclude that $X$ has a rational point corresponding
to $s$.
\end{proof}

\begin{merci} The second author would like to thank J. Achter, D. Harari, E. Ozman, T. Schlank,
and J. Starr for comments and information. He would also like
to thank the Simons Foundation (grant \#234591) and the
Centre Bernoulli at EPFL for financial support.

The third author (Y.Z.) is grateful to Frans Oort, Ching-Li Chai and Jiangwei Xue  for helpful
discussions and to the Simons Foundation  for financial and moral
support (via grant \#246625 to Yuri Zarkhin). Part of this work was done in May--June 2015 when he was  visiting Department of Mathematics of the Weizmann Institute of Science (Rehovot, Israel).  The final version of this paper was prepared in May-June 2016 when he was a visitor at the Max-Planck-Institut f\"ur Mathematik (Bonn, Germany). The hospitality and support of both Institutes are gratefully acknowledged.

We are very grateful to the anonymous referees, whose careful readings and comments have greatly improved the readability of this paper. We would also like to thank W. Sawin for comments.

\end{merci}

Department of Mathematics, University of Utah

Salt Lake City, UT 84103, USA

e-mail\textup{: \texttt{patrikis@math.utah.edu}}

Department of Mathematics, University of Texas

Austin, TX 78712, USA and

School of Mathematics and Statistics, University of Canterbury,

Private Bag 4800, Christchurch 8140, New Zealand

e-mail\textup{: \texttt{voloch@math.utexas.edu}}

Department of Mathematics, Pennsylvania State University,

University Park, PA 16802, USA

e-mail\textup{: \texttt{zarhin@math.psu.edu}}

\end{document}